\numberwithin{equation}{section}
\numberwithin{figure}{section}
\theoremstyle{plain}
\newtheorem{thm}{\protect\theoremname}[section]
\theoremstyle{definition}
\newtheorem{defn}[thm]{\protect\definitionname}
\theoremstyle{definition}
\newtheorem{example}[thm]{\protect\examplename}
\theoremstyle{plain}
\newtheorem{assumption}[thm]{\protect\assumptionname}
\theoremstyle{plain}
\newtheorem{prop}[thm]{\protect\propositionname}
\theoremstyle{remark}
\newtheorem{rem}[thm]{\protect\remarkname}
\theoremstyle{plain}
\newtheorem{cor}[thm]{\protect\corollaryname}
\theoremstyle{plain}
\newtheorem{lem}[thm]{\protect\lemmaname}
\providecommand{\assumptionname}{Assumption}
\providecommand{\corollaryname}{Corollary}
\providecommand{\definitionname}{Definition}
\providecommand{\examplename}{Example}
\providecommand{\lemmaname}{Lemma}
\providecommand{\propositionname}{Proposition}
\providecommand{\remarkname}{Remark}
\providecommand{\theoremname}{Theorem}
\begin{document}
\title{Risk aware minimum principle for optimal control of stochastic differential
equations}
\author{Jukka Isohätälä and William B. Haskell}
\maketitle
\begin{abstract}
We present a probabilistic formulation of risk aware optimal control
problems for stochastic differential equations. Risk awareness is
in our framework captured by objective functions in which the risk
neutral expectation is replaced by a risk function, a nonlinear functional
of random variables that account for the controller's risk preferences.
We state and prove a risk aware minimum principle that is a parsimonious
generalization of the well-known risk neutral, stochastic Pontryagin's
minimum principle. As our main results we give necessary and also
sufficient conditions for optimality of control processes taking values
on probability measures defined on a given action space. We show that
remarkably, going from the risk neutral to the risk aware case, the
minimum principle is simply modified by the introduction of one additional
real-valued stochastic process that acts as a risk adjustment factor
for given cost rate and terminal cost functions. This adjustment process
is explicitly given as the expectation, conditional on the filtration
at the given time, of an appropriately defined functional derivative
of the risk function evaluated at the random total cost. For our results
we rely on the Fréchet differentiability of the risk function, and
for completeness, we prove under mild assumptions the existence of
Fréchet derivatives of some common risk functions. We give a simple
application of the results for a portfolio allocation problem and
show that the risk awareness of the objective function gives rise
to a risk premium term that is characterized by the risk adjustment
process described above. This suggests uses of our results in e.g.
pricing of risk modeled by generic risk functions in financial applications.
\end{abstract}
\global\long\def\EE{\mathbb{E}}%

\global\long\def\PP{\mathbb{P}}%

\global\long\def\RR{\mathbb{R}}%

\global\long\def\ZZ{\mathbb{Z}}%

\global\long\def\NN{\mathbb{N}}%

\global\long\def\TT{\mathbb{T}}%
\global\long\def\UU{\mathbb{U}}%
\global\long\def\VV{\mathbb{V}}%
\global\long\def\WW{\mathbb{W}}%
\global\long\def\SS{\mathbb{S}}%
\global\long\def\CC{\mathbb{C}}%

\global\long\def\FF{\mathbb{F}}%
\global\long\def\XX{\mathbb{X}}%
\global\long\def\YY{\mathbb{Y}}%
\global\long\def\ZZ{\mathbb{Z}}%
\global\long\def\AA{\mathbb{A}}%
\global\long\def\DD{\mathbb{D}}%
\global\long\def\II{\mathbb{I}}%

\global\long\def\Ff{\mathcal{F}}%
\global\long\def\Hh{\mathcal{H}}%
\global\long\def\Vv{\mathcal{V}}%
\global\long\def\Cc{\mathcal{C}}%
\global\long\def\Ii{\mathcal{I}}%
\global\long\def\Nn{\mathcal{N}}%
\global\long\def\Uu{\mathcal{U}}%
\global\long\def\Ww{\mathcal{W}}%
\global\long\def\Aa{\mathcal{A}}%

\global\long\def\Ll{\mathcal{L}}%
\global\long\def\Pp{\mathcal{P}}%
\global\long\def\Oo{\mathcal{O}}%
\global\long\def\Mm{\mathcal{M}}%
\global\long\def\Gg{\mathcal{G}}%
\global\long\def\Ss{\mathcal{S}}%
\global\long\def\Ee{\mathcal{E}}%
\global\long\def\Kk{\mathcal{K}}%
\global\long\def\Rr{\mathcal{R}}%

\global\long\def\pP{\mathscr{P}}%
\global\long\def\lL{\mathscr{L}}%
\global\long\def\bB{\mathscr{B}}%
\global\long\def\dD{\mathscr{D}}%
\global\long\def\rR{\mathscr{R}}%

\global\long\def\RrR{\mathfrak{R}}%
\global\long\def\PpP{\mathfrak{P}}%
\global\long\def\XxX{\mathfrak{X}}%
\global\long\def\YyY{\mathfrak{Y}}%
\global\long\def\ZzZ{\mathfrak{Z}}%
\global\long\def\ddD{\mathfrak{d}}%
\global\long\def\mmM{\mathfrak{m}}%
\global\long\def\nnN{\mathfrak{n}}%
\global\long\def\TtT{\mathfrak{T}}%
\global\long\def\CcC{\mathfrak{C}}%
\global\long\def\EeE{\mathfrak{E}}%
\global\long\def\AaA{\mathfrak{A}}%
\global\long\def\BbB{\mathfrak{B}}%
\global\long\def\VvV{\mathfrak{V}}%

\global\long\def\iiI{\mathfrak{i}}%
\global\long\def\jjJ{\mathfrak{j}}%
\global\long\def\kkK{\mathfrak{k}}%
\global\long\def\llL{\mathfrak{l}}%
\global\long\def\mmM{\mathfrak{m}}%
\global\long\def\nnN{\mathfrak{n}}%

\global\long\def\D{\mathrm{d}}%

\global\long\def\E{\mathrm{e}}%

\global\long\def\Reals{\RR}%
\global\long\def\RealsNN{\RR_{\geq0}}%
\global\long\def\RealsP{\RR_{>0}}%

\global\long\def\deq{\coloneqq}%
\global\long\def\eqd{\eqqcolon}%
\global\long\def\dom{\mathop{\mathrm{dom}}}%
\global\long\def\argmax{\mathop{\mathrm{arg\,max}}}%
\global\long\def\argmin{\mathop{\mathrm{arg\,min}}}%
\global\long\def\supp{\mathop{\mathrm{supp}}}%

\global\long\def\transp{\top}%
\global\long\def\indic{\mathbb{\mathrm{1}}}%

\global\long\def\trace{\mathop{\mathrm{Tr}}}%
\global\long\def\CVaR{\mathop{\mathrm{CVaR}}}%

\global\long\def\DF{\mathrm{D}}%
\global\long\def\SD{\text{\ensuremath{\partial}}}%

\global\long\def\colplus{\oplus}%
\global\long\def\rowplus{\oplus}%
\global\long\def\diag{\mathop{\mathrm{diag}}}%

\global\long\def\cddot{\mathop{\cdot\cdot}}%

\global\long\def\deltaX{\hat{X}}%
\global\long\def\deltaY{\hat{Y}}%
\global\long\def\deltaZ{\hat{Z}}%
\global\long\def\deltaU{\hat{U}}%
\global\long\def\deltaB{\hat{B}}%
\global\long\def\deltaSigma{\hat{\Sigma}}%
\global\long\def\deltaF{\hat{F}}%
\global\long\def\deltaPhi{\hat{\Phi}}%

\global\long\def\deltaXast{\deltaX^{\ast}}%
\global\long\def\deltaYast{\deltaY^{\ast}}%
\global\long\def\deltaZast{\deltaZ^{\ast}}%
\global\long\def\deltaUast{\deltaU^{\ast}}%

\global\long\def\interior{\mathop{\mathrm{int}}}%
\global\long\def\sgn{\mathop{\mathrm{sgn}}}%
\global\long\def\cvar{\mathbb{ES}}%
\global\long\def\var{\mathrm{VaR}}%
\global\long\def\as{\text{-a.s.}}%

\global\long\def\warrow{\stackrel{\text{w}}{\longrightarrow}}%
\global\long\def\wsarrow{\stackrel{\text{w}^{\ast}}{\longrightarrow}}%

\global\long\def\cl{\mathop{\mathrm{cl}}}%
\global\long\def\diam{\mathop{\mathrm{diam}}}%
\global\long\def\ba{\mathrm{ba}}%

\global\long\def\rhoMD{\rho^{\mathrm{MD}}}%
\global\long\def\rhoMSD{\rho^{\mathrm{MD+}}}%
\global\long\def\rhoEpsMSD{\rho_{\epsilon}^{\mathrm{MD+}}}%
\global\long\def\rhoEntr{\rho^{\mathrm{Ent}}}%
\global\long\def\pbar{\bar{p}}%
\global\long\def\Pesssup{\PP\text{-}\mathop{\mathrm{ess\,sup}}}%
\global\long\def\qbar{\bar{q}}%
\global\long\def\phat{\hat{p}}%
\global\long\def\ptilde{\tilde{p}}%
\global\long\def\qtilde{\tilde{q}}%
\global\long\def\pzap{\mathring{p}}%
\global\long\def\qzap{\mathring{q}}%
\global\long\def\Snorm#1#2{\left\Vert #1\right\Vert _{\Ss_{\Ff}^{#2}}}%
\global\long\def\Hnorm#1#2{\left\Vert #1\right\Vert _{\Hh_{\Ff}^{#2}}}%
\global\long\def\Lnorm#1#2{\left\Vert #1\right\Vert _{#2}}%
\global\long\def\Lip{\mathrm{Lip}}%

\section{Introduction}

We consider the problem of optimal control of stochastic differential
equations of the form
\begin{gather}
x_{t}=\xi+\int_{0}^{t}b(s,x_{s},a_{s})\,\D s+\int_{0}^{t}\sigma(s,x_{s},a_{s})\,\D w_{s},\label{eq:intro_sde}
\end{gather}
over a finite time horizon, $t\in[0,T]\eqd\TT$, $0<T<\infty$, and
where $\xi$ is a random initial value, $x=(x_{t})_{t\in\TT}$ and
$a=(a_{t})_{t\in\TT}$ are the state and control processes, respectively,
taking values on spaces $\XX\deq\RR^{d_{x}}$ and $\AA\subset\RR^{d_{a}}$,
$d_{x},d_{a}\in\NN\deq\{1,2,\ldots\}$. The process $w=(w_{t})_{t\in\TT}$
is a standard $d_{w}$-dimensional Brownian motion, $d_{w}\in\NN$,
and $b$ and $\sigma$ are deterministic functions $b:\TT\times\XX\times\AA\to\RR^{d_{x}}$,
$\sigma:\TT\times\XX\times\AA\to\Reals^{d_{x}\times d_{w}}$.

Our focus here is on the problem of \emph{risk aware} control of the
diffusion process. The conventional optimal control theory of stochastic
processes considers risk neutral problems, understood here as the
minimization of expected costs accrued over the solution time interval,
\begin{align*}
\inf_{a=(a_{t})_{t\in\TT}} & \EE\biggl[\int_{0}^{T}c(t,x_{t}^{a},a_{t})\,\D t+g(x_{T}^{a})\biggr],
\end{align*}
where $c:\TT\times\XX\times\AA\to\RR$ is a cost rate function, and
$g:\XX\to\RR$ is a terminal cost function. In the \emph{risk aware}
control problems we consider here, the expectation in the objective
is supplanted by a risk function $\rho$ that describes controller's
preferences that are not sufficiently modeled by the expected value.
Formally, the risk aware problem is stated as
\begin{align}
\inf_{a=(a_{t})_{t\in\TT}} & \rho\biggl(\int_{0}^{T}c(t,x_{t}^{a},a_{t})\,\D t+g(x_{T}^{a})\biggr),\label{eq:intro_obj}
\end{align}
where we suppose that the risk function $\rho$ is some generic mapping
from random variables, representing total costs, to real values quantifying
the magnitude of the risk associated with a given random variable.
Convex or coherent risk measures form an important subset of the functions
$\rho$ that our results attempt to cover \cite{Artzner99,Follmer02,Frittelli02}. 

In the discrete time case, dynamic programming formulations of the
risk aware problem have proved elusive. This is intuitively unsurprising,
as the construction of the Bellman equation hinges on the linearity
of the expectation. Naturally this issue persists also in the continuous
time context. The continuous time setting, however, affords an alternative
to dynamic programming in the form of probabilistic formulations of
the control problem\footnote{To be clear, there appears to be a non-zero, though very small number
of works relating to probabilistic methods for \emph{discrete time}
stochastic optimal control; the only example that the present authors
are aware of is \cite{Blot2009}.}. Whereas in the dynamic programming world the control problem is
stated in terms of partial differential equations \cite{Lions83I,Lions83II,Soner06},
probabilistic formulations characterize the optimal controls in terms
of solutions to stochastic differential equations \cite{Yong1999}.
In this work, we specifically focus on the stochastic Pontryagin's
minimum principle and its generalization to the risk aware case\footnote{Throughout we use the term \emph{minimum principle}, since we phrase
our control problem as the minimization of costs. The term \emph{maximum
principle}, commonly used in the literature, should be seen as an
essentially synonymous term that is more appropriate when the problem
is stated as a maximization of rewards.}.

The risk neutral stochastic minimum principle, simply stated, asserts
that an optimal control minimizes, almost surely and at almost every
point in time, an appropriately defined Hamiltonian function that
in turn depends on adjoint processes satisfying a backward stochastic
differential equation. These necessary conditions for optimality derive
from variational equations describing the response of the cost functional
at the optimal control to an infinitesimal change in control. This
local nature of the minimum principle also provides a heuristic, \emph{a
priori} justification for preferring it over dynamic programming in
risk aware problems: Bellman's principle of optimality underlying
the dynamic programming method is a statement about the structure
of the objective of the control problem that relies on the linearity
and the tower property of the expectation. The minimum principle on
the other hand relates the optimal controls to the local behavior
of state space trajectories and the cost functions. As such, the minimum
principle does not impose requirements, here linearity, on the structure
of the risk function in the same way as dynamic programming does.
Instead, central to deriving a risk aware minimum principle is being
able to evaluate the response of a risk function to changes in its
input random variables.

\paragraph*{Literature review}

The stochastic minimum principle has a long history. Its early derivations
can be found in the works \cite{Kushner1972,Bismut1973,Bismut1978,Bensoussan1982},
with the modern version often being attributed to \cite{Peng1990}.
These results have spawned numerous refinements. Here we mention extensions
to probability measure valued controls given in \cite{Bahlali2002,Bahlali2006,Bahlali2008},
as this type of a control framework used in this paper. Generalizations
of the minimum principle to optimal control of continuous time partially
observed processes, a topic closely related to risk aware optimization,
have also been constructed \cite{Tang1998,Oksendal2007,Ahmed2013}.
The minimum principle has proven to be a viable alternative to dynamic
programming e.g. when the controls might not be Markov, in the sense
that they cannot be expressed as functions of the state variables
at any given point in time. This is the case for McKean-Vlasov problems,
for which probabilistic methods appear particularly well adapted \cite{Carmona2015_FBSDE_McKV,Carmona2018_I,Carmona2018_II}.
The minimum principle is extensively covered in \cite{Yong1999},
with numerous additional references. Dynamic programming and the minimum
principle are considered in parallel in \cite{Zhou1991}, and a comprehensive
review of the two methods can be found in \cite[Chapter 5]{Yong1999}.
For applications of the minimum principle, and backward stochastic
differential equations, we refer the reader to \cite{ElKaroui1997,Pham2009}.

Much of the recent work on the topic of control under uncertainty,
broadly understood as random variability not accounted for by an expectation
under full observations, has been done using dynamic risk measures
\cite{Acciaio2011} or nonlinear expectations such as Peng's $g$-expectation
\cite{Peng1997,Peng2004_NonlinE} and its generalization, the $G$-framework
\cite{Peng2008,Peng2010}. Compared to static risk functions, these
approaches impose additional structure, most notably time-consistency
that allows for the use of e.g. the dynamic programming principle.
While it is well-known that $g$-expectations give rise to convex
risk functions, the converse is generally true only for risk functions
that are time-consistent \cite{RosazzaGianin2006}. In our approach,
we consider objectives that are given in terms of static, law invariant
risk measures, and in particular we do not impose time-consistency
on the risk function. Moreover, since the risk function is not expressed
as a $g$-expectation, we do not need to consider forward-backward
stochastic differential equation as the starting point, as was done
in e.g. \cite{Oksendal2009_FBSDE} where a minimum principle was derived
for stochastic differential equations driven by Lévy processes. Optimality
conditions using a similar variational approach were given for forward-backward
differential equations in \cite{Yong2010}. Dynamic risk measures
were used in \cite{Barrieu2004_FBSDE}, where specifically the problem
of optimal derivatives design was considered. A dynamic programming
formulation for the $G$-framework has been developed in \cite{Hu2014,Hu2017}. 

Finally, we note that in addition to the probabilistic and dynamic
programming approaches, convex analytic and linear programming techniques
form a third, loose set of methods for both risk neutral and risk
aware control. For the risk neutral case, we refer the reader to \cite{Stockbridge90a,Stockbridge90b}
for early development and \cite{Bhatt96,Kurtz98,Kurtz01} for refinements.
A risk aware version has been developed in \cite{Isohatala2018_subm},
where a state space augmentation scheme, inherited from earlier discrete
time results \cite{Haskell15}, was used to construct a formulation
of the risk aware problem. While this approach leads to a tractable
computational method for solving the control problem, it does not
provide a useful characterization of the optimal control in the way
that the minimum principle does.

\paragraph*{Contributions and organization of the paper}

The contributions of this paper can be summed up as follows: (\emph{i})
We generalize the control problem informally stated in Eqs.~(\ref{eq:intro_sde},~\ref{eq:intro_obj})
to feature measure valued control processes. Albeit the control model
and the notion of a solution we utilize has been considered by some
authors under the name of relaxed controls, we opt for a new term
of vague controls. We justify the nomenclature by demonstrating key
differences between relaxed and vague controls, and further show why
the latter notion of a solution can be particularly useful. (\emph{ii})
We introduce law invariant risk functions into a framework that allows
a natural notion of functional differentiability that can subsequently
be applied in deriving variational conditions for optimality of controls.
(\emph{iii}) Using these results, we formulate and prove a risk aware
generalization of the stochastic Pontryagin's minimum principle, and
in doing so, we give a characterization of the optimal control of
a risk aware problem. We find that in comparison to the risk neutral
problem, the minimum principle is modified by a risk adjustment process
that is related to the functional derivative of the risk function,
evaluated at the terminal cost. Finally, (\emph{iv}), we demonstrate
by means of solving a simple example that in financial applications,
risk awareness creates non-trivial but intuitive risk pricing effects.

In the next section, we will describe the notations used in the paper,
and state the control problem we consider. Section~\ref{sec:risk-functions}
describes the risk functions that model the risk aware objectives.
We outline some necessary differentiability properties of the functions
that will subsequently be needed for the probabilistic formulation
of the problem that is given in the following Section~\ref{sec:probform}.
This section derives necessary and sufficient conditions for the optimality
of a control process. We present an application of the theory in Section~\ref{sec:exprob}
where we characterize the optimal controls of a simple portfolio allocation
problem. Section~\ref{sec:concl} concludes with discussion and some
remarks. Technical proofs are deferred to Appendix~\ref{sec:proofs}.

\section{\label{sec:model}Model}

Throughout the paper we will use the following notations and definitions:
For any probability space $(\Omega,\Sigma,\PP)$, a Banach space $(\VV,|\cdot|)$
and $p\geq0$, we denote $\Ll^{p}(\Omega,\Sigma,\PP;\VV)$, or $\Ll^{p}(\Omega;\VV)$
for short, as the set of random variables $q:\Omega\to\VV$ such that
$\EE_{\PP}[|q|{}^{p}]<\infty$, where $\EE_{\PP}$ stands for the
expectation with respect to the measure $\PP$. If $\PP$ is clear
from the context, we simply use the symbol $\EE$. In addition, $\Ll^{\infty}(\Omega;\VV)$
denotes the space of $\PP$-essentially bounded random variables.
We shall use $\Lnorm{\cdot}p$ to denote the norm on $\Ll^{p}(\Omega;\VV)$,
$p\in[1,\infty]$. For a real Banach space $\VV$, we use $\VV^{\ast}$
to denote its continuous dual, and $\langle\cdot,\cdot\rangle:\VV^{\ast}\times\VV\to\RR$
for the duality pairing.

Borel probability measures on a topological space $\VV$ are denoted
by $\Pp(\VV)$, and the Borel $\sigma$-algebra on $\VV$ is denoted
by $\bB(\VV)$. The Dirac measure centered at $x\in\VV$ is denoted
by $\delta_{x}$. By $\Pp^{p}(\VV)$, $p\in[1,\infty)$, we mean probability
measures $\mu\in\Pp(\VV)$ such that $\int d(v,v_{0}){}^{p}\mu(\D v)<\infty$
for all $v_{0}\in\VV$; $\Pp^{\infty}(\VV)$ denotes probability measures
with bounded support. The law or distribution of a random variable
$V\in\Ll^{p}(\Omega;\VV)$, $p\in[1,\infty]$, is denoted by $\lL_{\PP}(V)$,
that is, $\lL_{\PP}(V)(\Gamma)\deq\PP\circ V^{-1}(\Gamma)$ for all
$\Gamma\in\bB(\Reals)$; if the probability measure is clear from
the context, we use the symbol $\lL$ instead. The extended reals
will be denoted $\RR_{\infty}\deq\RR\cup\{\infty\}$ and elements
of $\RR^{n}$, $n\in\NN$, are by default interpreted as column vectors,
i.e. $\RR^{n}\deq\RR^{n\times1}$.

For a given filtered probability space $(\Omega,\Sigma,\Ff,\PP)$,
a normed space $(\VV,|\cdot|)$ and a $p\in[1,\infty)$, we shall
use $\Ss_{\Ff}^{p}(\Omega,\Sigma,\PP;\VV)$ or $\Ss_{\Ff}^{p}(\Omega;\VV)$
for short to denote $\VV$-valued $\Ff$-predictable continuous processes
on $\TT$ such that
\begin{gather*}
\Snorm xp\deq\EE\biggl[\sup_{t\in\TT}|x_{t}|^{p}\biggr]^{1/p}<\infty\quad\forall x\in\Ss_{\Ff}^{p}(\Omega;\VV).
\end{gather*}
In addition, $\Hh_{\Ff}^{p}(\Omega,\Sigma,\PP;\VV)=\Hh_{\Ff}^{p}(\Omega;\VV)$
denotes the space of $\Ff$-predictable processes on $(0,T)$ such
that
\begin{gather*}
\Hnorm zp\deq\EE\biggl[\biggl(\int_{0}^{T}|z_{t}|^{2}\,\D t\biggr)^{p/2}\biggr]^{1/p}<\infty\quad\forall z\in\Hh_{\Ff}^{p}(\Omega;\VV).
\end{gather*}
Two processes $z,z^{\prime}\in\Hh_{\Ff}^{p}(\Omega;\VV)$ are considered
equivalent if $\Hnorm{z-z^{\prime}}p=0$. Finally, we set $\Ss_{\Ff}^{\infty}(\Omega;\VV)\deq\cap_{p\in[1,\infty)}\Ss_{\Ff}^{p}(\Omega;\VV)$
and $\Hh_{\Ff}^{\infty}(\Omega;\VV)\deq\cap_{p\in[1,\infty)}\Hh_{\Ff}^{p}(\Omega;\VV)$.

Continuous functions from a topological space $\VV$ to a normed space
$\UU$ are denoted $\Cc(\VV,\UU)$, and we equip this space with the
usual supremum norm. If $\UU=\RR$, we abbreviate this by $\Cc(\VV)$.
The subspaces of bounded and compactly supported functions are denoted
$\Cc_{b}(\VV)$ and $\Cc_{c}(\VV)$, respectively. Superscripted function
spaces $\Cc^{(k)}(\RR^{n})$, $\Cc_{b}^{(k)}(\RR^{n})$, etc., $n\in\NN$,
denote spaces of $k\in\NN$ times continuously differentiable functions
with derivatives respectively in $\Cc$, $\Cc_{b}$, etc. For every
differentiable function $f:\RR^{n}\to\RR^{k}$, $n,k\in\NN$, the
Jacobian of $f$ is denoted $\nabla f$, so that $\nabla f\in\RR^{n}\to\RR^{k\times n}$
and $(\nabla f(x))_{ij}\deq\partial f_{i}(x)/\partial x_{j}$ for
all $i\in\{1,\ldots,k\}$, $j\in\{1,\ldots,n\}$; in particular, the
gradient of a real-valued function is a row vector. For multivariate
functions, we use $\nabla_{\UU}$ to indicate that the derivative
is taken with respect to the argument taking values in the space $\UU$.
For convenience, for all $A\in\RR^{n\times m}$ and $B\in\RR^{n\times\ell}$,
$n,m,\ell\in\NN$, we denote $A\cdot B\deq(A^{\transp}B)^{\transp}=B^{\transp}A\in\RR^{\ell\times n}$,
where $(\cdot)^{\transp}$ stands for the transpose.

We generalize Eq.~(\ref{eq:intro_sde}) to feature measure valued
controls. Instead of an adapted stochastic process $(a_{t})_{t\in\TT}$
taking values on an action space $\AA\subset\RR^{d_{a}}$, the controls
shall here in general be probability measure valued processes $(\pi_{t})_{t\in\TT}$,
$\pi_{t}\in\Pp(\AA)$. We introduce the notion of vague controls (throughout
$\XX=\RR^{d_{x}}$ and $\AA\subset\RR^{d_{a}}$ shall be our given
state and action spaces, however, we will also consider solutions
on extended state spaces, and hence the definitions below should be
understood to hold for any analogously defined finite dimensional
state and action spaces).
\begin{defn}
\label{def:vagconsol}(\emph{Vague controlled solution}) Let $\XX\deq\RR^{d_{x}}$,
$\WW\deq\RR^{d_{w}}$, and $\AA\subset\RR^{d_{a}}$, and let $b:\TT\times\XX\times\AA\to\XX$,
$\sigma:\TT\times\XX\times\AA\to\XX\times\WW=\RR^{d_{x}\times d_{w}}$
be given drift and diffusion functions that are continuous on $\TT\times\XX$
and measurable on $\AA$. A\emph{ vague controlled solution to the
problem $(b,\sigma,\nu)$} comprises a filtered probability space
$(\Omega,\Sigma,\Ff=(\Ff_{t})_{t\in\TT},\PP)$ and a process $(x_{t},w_{t},\pi_{t})_{t\in\TT}$,
$x_{t}\in\XX$, $w_{t}\in\WW$, $\pi_{t}\in\Pp(\AA)$ for all $t\in\TT$,
such that: (\emph{i}) the filtration $\Ff$ is complete and right-continuous,
(\emph{ii}) $(x_{t})_{t\in\TT}$ is $\Ff$-adapted with continuous
sample paths, $(w_{t})_{t\in\TT}$ is an $\Ff$-Brownian motion, and
$(\pi_{t})_{t\in\TT}$ is $\Ff$-progressively measurable, (\emph{iii})
the distribution of $x_{0}$ is $\nu$, and (\emph{iv}) the processes
satisfy, $\PP$-almost surely,
\begin{gather}
x_{t}=x_{0}+\int_{0}^{t}\int_{\AA}b(s,x_{s},a)\pi_{s}(\D a)\,\D s+\int_{0}^{t}\int_{\AA}\sigma(s,x_{s},a)\pi_{s}(\D a)\,\D w_{s}\qquad\forall t\in\TT.\label{eq:vague-sde}
\end{gather}
Moreover, we call the solution \emph{$r$-admissible}, $r\in[1,\infty]$,
if we additionally have that (\emph{v}) 
\begin{gather}
\begin{gathered}\EE\biggl[\sup_{t\in\TT}\int_{\AA}|a|^{r}\pi_{t}(\D a)\biggr]<\infty,\quad r<\infty,\\
\text{or}\,\text{\ensuremath{\supp\pi_{t}} is compact}\quad\forall t\in\TT,\quad r=\infty.
\end{gathered}
\label{eq:admissible}
\end{gather}
We shall use $\VvV(b,\sigma,\nu)$ to denote vague controlled solutions
of the problem $(b,\sigma,\nu)$.
\end{defn}

\begin{defn}
A \emph{strict controlled solution} is a vague controlled solution
$\pi\in\VvV(b,\sigma,\nu)$ such that $\pi_{t}$ is a Dirac measure
for all $t\in\TT$.
\end{defn}

For brevity, we write $\pi\in\VvV(b,\sigma,\nu)$ to refer to a vague
controlled solution, but it is important to bear in mind that the
solutions are in fact $(\Omega,\Sigma,\Ff,\PP,(x_{t})_{t\in\TT},(w_{t})_{t\in\TT},\pi=(\pi_{t})_{t\in\TT})$-tuples.
If necessary, we label the state process by the control, i.e. write
$(x_{t}^{\pi})_{t\in\TT}$. Clearly, strict controlled solutions can
be identified with controlled solutions where the control process
takes values on $\AA$ rather than $\Pp(\AA)$.

Vague controlled solutions are in the current literature frequently
referred to as relaxed controlled solutions, however, these two concepts
differ in some key aspects. In fact, up to the knowledge of the authors,
vague controlled solutions have never been called anything else but
relaxed controls, and the differences between the definitions are
not always explicitly noted. Examples of works where vague controls
are used include \cite{Ma1995,Bahlali2008,Andersson2010,Ahmed2013}.
We note, as \cite{Andersson2010}, that vague controlled stochastic
differential equations can be related to controlled stochastic processes
driven by non-orthogonal martingale measures, whereas the more canonical
relaxed controlled model can be identified with equations driven by
orthogonal martingale measures \cite{ElKaroui1990}. See e.g. \cite{Walsh1986}
for more discussion on martingale measures. It also bears pointing
out that the topology conventionally assigned to relaxed controls,
see e.g. \cite{Fleming1984}, may be too coarse for vague controlled
problems to guarantee the continuity of the mapping from controls
to the stochastic trajectories, which has implications for e.g. applying
the chattering lemma \cite[Theorem 2.2]{ElKaroui1988} to vague controls.
Indeed, as Example~\ref{exa:std-counter} below demonstrates, it
may not always be possible to find strict controls and associated
solutions of Eq.~(\ref{eq:vague-sde}) that approximate a given vague
controlled solution.

To elucidate the difference between these notions of solutions, consider
$\pi\in\VvV(b,\sigma,\nu)$, where $b,\sigma$ are for simplicity
taken to be bounded. Applying Itô's lemma to $f(x_{t}^{\pi})$, $f\in\Cc_{c}^{(2)}(\XX)$,
we have that the process $(v_{t}^{f})_{t\in\TT}$, defined 
\begin{align}
v_{t}^{f}\deq f(x_{t}^{\pi})-f(x_{0}^{\pi}) & -\int_{0}^{t}\biggl\{\nabla f(x_{s}^{\pi})\int_{\AA}b(s,x_{s}^{\pi},a)\pi_{s}(\D a)\nonumber \\
 & \qquad+\frac{1}{2}\trace\left[\nabla^{\transp}\nabla f(x_{s})\left(\int_{\AA}\sigma(s,x_{s}^{\pi},a)\pi_{s}(\D a)\right)\left(\int_{\AA}\sigma(s,x_{s}^{\pi},a)\pi_{s}(\D a)\right)^{\transp}\right]\biggr\}\,\D s\qquad\forall t\in\TT\label{eq:Vrel}
\end{align}
is a martingale for any $f\in\Cc_{c}^{(2)}(\XX)$. A \emph{relaxed
controlled solution} corresponding to the drift and diffusion functions
$b,\sigma$ is conventionally defined as a filtered probability space
together with a stochastic process $(\xi_{t}^{\eta},\eta_{t})_{t\in\TT}$,
$\xi_{t}^{\eta}\in\XX$, $\eta_{t}\in\Pp(\AA)$ for all $t\in\TT$,
satisfying items (\emph{i}-\emph{iii}) of Definition~\ref{def:vagconsol},
but characterized by the condition that the processes $(m_{t}^{f})_{t\in\TT}$,
\begin{align}
m_{t}^{f}\deq f(\xi_{t}^{\eta})-f(\xi_{0}^{\eta}) & -\int_{0}^{t}\int_{\AA}\biggl\{\nabla f(\xi_{s}^{\eta})b(s,\xi_{s}^{\eta},a)\nonumber \\
 & \qquad+\frac{1}{2}\trace\left[\nabla^{\transp}\nabla f(\xi_{s}^{\eta})\sigma(s,\xi_{s}^{\eta},a)\sigma(s,\xi_{s}^{\eta},a)^{\transp}\right]\biggr\}\eta_{s}(\D a)\,\D s\label{eq:Mrel}
\end{align}
are martingales for all $f\in\Cc_{c}^{(2)}(\XX)$. Comparing Eqs.~(\ref{eq:Mrel})
and~(\ref{eq:Vrel}) we see that the order of integration against
the control and squaring the diffusion coefficient are interchanged.
Thus, informally, a vague controlled solution corresponds to processes
where for any $t\in\TT$, the amplitude of the noise is the $\pi_{t}$-average
of $a\to\sigma(t,x_{t}^{\pi},a)$, whereas in the relaxed controlled
case, the noise is the $\eta_{t}$-root mean square of the diffusion
function.

It is well-known that a strict optimal control may fail to exist while
one can always be found within the set of relaxed controls. This is
due to the convexity of the space of probability measures, a property
that has in the past often been exploited in optimal control of stochastic
differential equations \cite{ElKaroui1987,Borkar1988,Haussmann1990}.
An additional motivation for considering generalizations of strict
controls comes from the fact that little is known about the nature
of the optimal control in the risk aware case. In discrete time, risk
aware formulations featuring generic risk functions in the objective
have been successfully described\footnote{These works used a somewhat inelegant state space augmentation scheme
that can here be avoided; see also Remark~\ref{rem:no-aug}.} using the convex analytic formulation \cite{Haskell15}, later expanded
to the continuous-time case as well \cite{Isohatala2018_subm}. Such
problems often, though certainly not exclusively, feature relaxed
controls as the optimal solution \cite{Guo07,Zhang08,Stockbridge12}
and it is therefore not unreasonable to expect that a generalization
of strict controls may be appropriate here as well.

We note that relaxed controlled solutions are more widely represented
in the literature than vague controlled solutions. This is in part
due to the fact that relaxed controlled solutions can be viewed as
the closure of strict controls, under a suitably defined topology
\cite{ElKaroui1987}. This is not the case for vague controls. The
following example demonstrates that there are vague controlled solutions
whose finite dimensional distributions cannot be approximated by those
of strict or relaxed controls. A similar example has been featured
earlier in \cite{Bahlali2018}.
\begin{example}
\label{exa:std-counter}Consider $\XX=\RR$, $\AA=\{-1,+1\}$ and
$b(t,x,a)=0$, $\sigma(t,x,a)=a$ for all $(t,x,a)\in\TT\times\XX\times\AA$,
and $\nu=\delta_{0}$. Then for all strict controls $\pi\in\VvV(b,\sigma,\nu)$
(in fact, for all relaxed controls as well), $x^{\pi}$ is an $\Ff$-Brownian
motion, but there exists a vague controlled solution $\pi^{\prime}\in\VvV(b,\sigma,\nu)$
such that $\pi_{t}^{\prime}=(\delta_{-1}+\delta_{+1})/2$ and $x_{t}^{\pi^{\prime}}=0$
for all $t\in\TT$. Consequently, considering e.g. a control problem
of $\inf_{\pi}\EE[(x_{T}^{\pi})^{2}]$, it is clear that a vague controlled
solution may attain a strictly lower optimum value that can be found
using strict controls.
\end{example}

Our main reason for considering vague controls is that the optimality
conditions obtained from a stochastic minimum principle are considerably
simpler and thus easier to use in practice. In the classical risk
neutral case, and when the control set $\AA$ is non-convex and the
diffusion coefficient depends on the control, first and second order
adjoint equations are needed to characterize the optimal control,
see e.g. the classic work by Peng \cite{Peng1990} and more recent
results for relaxed controls in \cite{Bahlali2006,Labed2017}. The
issues resulting from the need for second order expansions are exacerbated
in the risk aware setting, where the second order expansions will
also require us to compute second order functional derivatives of
the risk function $\rho$. For vague controls, first order expansions
turn out to be sufficient, which is also the case in risk neutral
problems considered in \cite{Bahlali2008,Ahmed2013}. We note that
the sufficiency of first order expansions is not entirely surprising,
since this is also the case for strict controls when the control set
$\AA$ is convex, see e.g. \cite{Bismut1973}. We demonstrate this
below in Example~\ref{exa:first-order}. Indeed, vague controls could
be viewed as \emph{measure valued strict controls}, in which case
the control set $\Pp(\AA)$ is naturally convex. Finally, as strict
controls are a subset of vague controls, optimality of a strict control
is readily shown by demonstrating that a vague control process necessarily
takes values on Dirac measures.
\begin{example}
\label{exa:first-order}Consider the problem of \cite[Example 4.1]{Yong1999}.
We set $\XX=\RR$, $\AA=\{0,1\}$ and $b(t,x,a)=0$, $\sigma(t,x,a)=a$
for all $(t,x,a)\in\TT\times\XX\times\AA$, $\nu=\delta_{0}$, and
consider minimizing $\EE[x_{T}^{2}]$ over strict controls. Clearly,
$(x_{t}=0,\pi_{t}=\delta_{0})_{t\in\TT}$ is optimal. In the context
of strict controls, one considers spike perturbations (cf. Eq.~(4.3)
of \cite{Yong1999}) to an optimal control to establish conditions
for optimality. In \cite[Example 4.1]{Yong1999}, it is shown that
if $x^{\epsilon}=(x_{t}^{\epsilon})_{t\in\TT}$ is a state space process
corresponding to such a perturbation, and $x=(x_{t})_{t\in\TT}$ is
optimal (here, zero), then $\sup_{t\in\TT}\EE[|x_{t}^{\epsilon}-x_{t}|^{2}]=\epsilon/2$.
However, if we consider a vague control formed by a convex combination
of the optimal control $\pi=(\pi_{t}=\delta_{0})_{t\in\TT}$ and an
arbitrary progressively measurable $q=(q_{t})_{t\in\TT}$ so that
$\pi_{t}^{\epsilon}=(1-\epsilon)\pi_{t}+\epsilon q_{t}$ for all $t\in\TT$,
we find that 
\begin{align*}
\sup_{t\in\TT}\EE\biggl[\bigl|x_{t}^{\epsilon}-x_{t}\bigr|^{2}\biggr] & \leq4T\epsilon^{2}.
\end{align*}
Therefore, perturbations to vague controls result in an $\Oo(\epsilon^{2})$
response in the state space paths (in the above sense), whereas for
strict controls, we only have $\Oo(\epsilon)$. This suggests that
computing the first order response may indeed be sufficient for establishing
necessary conditions for optimality of vague controlled solutions.
\end{example}

The natural downside to considering vague controls is, as Example~\ref{exa:std-counter}
demonstrates, that the optimal vague control may be something that
cannot be approximated by strict controls. This may be an issue in
practice, if a vague control cannot realistically be implemented.
This is in contrast to the case of usual relaxed controls, for which
it is typically possible to construct an $\epsilon$-optimal strict
control from an optimal relaxed control. 

We assume standard continuity and boundedness conditions that guarantee
the existence of solutions to Eq.~(\ref{eq:vague-sde}), given a
probability space, a Brownian motion, and a control process. We also
state conditions for the cost rate function, and hence, we first need
to introduce the total cost random variable. For every $\pi\in\VvV(b,\sigma,\nu)$
we define the total cost $C^{\pi}$ as 
\begin{gather}
C^{\pi}\deq\int_{0}^{T}\int_{\AA}c(t,x_{t}^{\pi},a)\pi_{t}(\D a)\,\D t+g(x_{T}^{\pi}),\label{eq:rel_cost}
\end{gather}
where $c:\TT\times\XX\times\AA\to\RR$ is the cost rate function and
$g:\XX\to\RR$ is the terminal cost. In the risk neutral case, it
generally suffices to ensure that $C^{\pi}\in\Ll^{1}(\Omega;\RR)$,
however here, we shall need to compute the risk of $C^{\pi}$ which
generally involves evaluating an $\Ll^{p}(\Omega;\RR)\to\RR$ functional
at $C^{\pi}$, where $p\in[1,\infty)$. In order to accommodate a
wider range of possible values of $p$, somewhat more elaborate conditions
(compared to the risk neutral case) on the bounds of $b,\sigma,c,g$
and their growth rates shall be needed. In addition, as the optimality
conditions given in Section~\ref{sec:probform} will be derived from
variational inequalities, we require that the relevant functions are
all also differentiable. Formally, our baseline assumptions are as
follows.
\begin{assumption}
\label{assu:sde-baseline}The initial distribution $\nu\in\Pp(\XX)$,
drift and diffusion functions $b:\TT\times\XX\times\AA\to\XX$, $\sigma:\TT\times\XX\times\AA\to\RR^{d_{x}\times d_{w}}$,
cost rate and terminal cost functions $c:\TT\times\XX\times\AA\to\RR$,
$g:\XX\to\RR$, and admissible control processes are such that there
are constants $L>0$, $\pbar_{1}\in[0,1]$, $\pbar_{2}\in[0,\infty)$,
$\pbar_{3}\in[0,\infty]$, $\pbar\in[1,\infty)$, $p_{1}\in[0,\infty)$,
$p_{2}\in[0,\infty)$, $p_{1}^{\prime}\in[0,\infty)$, $p_{2}^{\prime}\in[0,\infty)$
satisfying: (\emph{i}) if $\pbar_{3}=\infty$, then $\AA$ is compact;
(\emph{ii}) for all $(t,x,a)\in\TT\times\XX\times\AA$,
\begin{subequations}
\label{eq:baseline-growth-b-sigma}
\begin{gather}
\left|b(t,x,a)\right|\leq L\left(1+\left|x\right|^{\pbar_{1}}+\left|a\right|^{\pbar_{2}}\right),\label{eq:baseline-growth-b}\\
\left|\sigma(t,x,a)\right|\leq L\left(1+\left|x\right|^{\pbar_{1}}+\left|a\right|^{\pbar_{2}}\right);\label{eq:baseline-growth-sigma}
\end{gather}
\end{subequations}
(\emph{iii}) for all $(t,a)\in\TT\times\AA$, the functions $x\to b(t,x,a)$
and $x\to\sigma(t,x,a)$ are continuously differentiable, and the
derivatives are bounded by $L$; (\emph{iv}) for all $(t,x,a)\in\TT\times\XX\times\AA$,
\begin{subequations}
\label{eq:baseline-growth-costs}
\begin{gather}
\left|c(t,x,a)\right|\leq L\left(1+\left|x\right|^{p_{1}}+\left|a\right|^{p_{2}}\right),\label{eq:baseline-growth-c}\\
\left|g(x)\right|\leq L\left(1+\left|x\right|^{p_{1}}\right);\label{eq:baseline-growth-g}
\end{gather}
\end{subequations}
(\emph{v}) for all $(t,a)\in\TT\times\AA$, the functions $x\to c(t,x,a)$
and $x\to g(x)$ are continuously differentiable, and satisfy, for
all $(t,a)\in\TT\times\AA$,
\begin{subequations}
\label{eq:baseline-growth-nabla-costs}
\begin{gather}
\left|\nabla_{\XX}c(t,x,a)\right|\leq L\left(1+\left|x\right|^{p_{1}^{\prime}}+\left|a\right|^{p_{2}^{\prime}}\right),\label{eq:baseline-growth-nabla-c}\\
\left|\nabla_{\XX}g(x)\right|\leq L\left(1+\left|x\right|^{p_{1}^{\prime}}\right);\label{eq:baseline-growth-nabla-g}
\end{gather}
\end{subequations}
(\emph{vi}) the initial distribution $\nu\in\Pp^{\bar{p}}(\XX)$;
(\emph{vii}) all control processes are $\pbar_{3}$-admissible, i.e.
satisfy Eq.~(\ref{eq:admissible}) for $r=\pbar_{3}$.
\end{assumption}

\begin{defn}
Let $p\in[1,\infty)$. We say that a vague controlled solution $\pi\in\VvV(b,\sigma,\nu)$
is \emph{$p$-feasible} and denote $\pi\in\VvV^{p}(b,\sigma,\nu)$,
if there exists $\pbar$, $\pbar_{i}$, $i\in\{1,2,3\}$, $p_{i}$,
$p_{i}^{\prime}$, $i\in\{1,2\}$ satisfying Assumption~\ref{assu:sde-baseline}
and the following inequalities:
\begin{subequations}
\label{eq:feasible-p}
\begin{gather}
p<\pbar\leq\pbar_{3},\label{eq:p-less-pbar}\\
\pbar_{2}\leq\frac{\pbar_{3}}{\pbar},\label{eq:pbar-bounds}\\
p_{1}^{\prime}\leq p_{1},\quad p_{2}^{\prime}\leq p_{2},\label{eq:growth-deriv-relation}\\
p_{1},p_{2}<\frac{\pbar}{p}-1.\label{eq:main-deriv-bounds}
\end{gather}
\end{subequations}
\end{defn}

To give an intuition on the meanings and uses of these constants (Proposition~\ref{prop:basic-existence-uniqueness}
below gives a more formal statement), $p$ and $\pbar$ shall respectively
represent the order up to which the costs $C^{\pi}$ and the state
space variables $x_{t}^{\pi}$, $t\in\TT$, are integrable. We allow
for unbounded cost rates and terminal costs, in fact even superlinear
growth is admissible ($p_{1},p_{2}>1$ in Eq.~(\ref{eq:baseline-growth-costs})),
but in order to guarantee that costs are in $\Ll^{p}(\Omega;\RR)$,
bounds on the integrability of the state and action variables need
to be imposed. Eqs.~(\ref{eq:feasible-p}) amount to sufficient conditions
for such integrability to hold. 

We expect a typical use-case of our results is such where one is given
drift and diffusion functions $b$ and $\sigma$, a cost structure
in the form of the cost rate and terminal cost functions $c$ and
$g$, and a risk function as a map $\Ll^{p}(\Omega;\RR)\to\RR$ with
a fixed $p\in[1,\infty)$, mapping from costs to risks. The growth
rates of these functions dictate the values of $\pbar_{1}$, $\pbar_{2}$,
$p_{1}$, $p_{2}$, $p_{1}^{\prime}$, and $p_{2}^{\prime}$. The
feasibility conditions of Eqs.~(\ref{eq:feasible-p}) can then be
understood as determining admissible $\pbar$ and $\pbar_{3}$, representing
the level of randomness in the initial condition and the range of
control values that yield $\Ll^{p}(\Omega;\RR)$-finite costs. 

Given a filtered probability space with a Brownian motion and a progressively
measurable $\Pp(\AA)$-valued control process, stochastic differential
equations satisfying the Assumptions~\ref{assu:sde-baseline} and
inequalities of Eqs.~(\ref{eq:feasible-p}) for a given $p\in[1,\infty)$
have strong solutions. Together, these comprise a $p$-feasible vague
controlled solution, and moreover, if $\pi\in\VvV^{p}(b,\sigma,\nu)$,
then the costs $C^{\pi}\in\Ll^{p}(\Omega;\RR)$.
\begin{prop}
\label{prop:basic-existence-uniqueness} Let $p\in[1,\infty)$ and
suppose Assumptions~\ref{assu:sde-baseline} and Eq.~(\ref{eq:feasible-p})
hold. Let $(\Omega,\Sigma,\Ff=(\Ff_{t})_{t\in\TT},\PP)$ be a filtered
probability space, $\Ff$ a complete and right continuous filtration,
and $(w_{t})_{t\in\TT}$ a $d_{w}$-dimensional $\Ff$-Brownian motion.
(\emph{i}) If $\pi=(\pi_{t})_{t\in\TT}$ is an $\Ff$-progressively
measurable $\Pp(\AA)$-valued stochastic process that satisfies Eq.~(\ref{eq:admissible})
for $\pbar_{3}$, then there exists a pathwise unique solution $x^{\pi}=(x_{t}^{\pi})_{t\in\TT}$
to the stochastic differential equation~(\ref{eq:vague-sde}) such
that 
\begin{gather}
x^{\pi}\in\Ss_{\Ff}^{\pbar}(\Omega;\XX),\qquad C^{\pi}\in\Ll^{p}(\Omega;\RR),\label{eq:x-main-estimate}
\end{gather}
so that $(\Omega,\Sigma,\Ff,\PP,\pi,x^{\pi})$ is in $\VvV^{p}(b,\sigma,\nu)$
or in other words is a $p$-feasible vague controlled solution. (\emph{ii})
If additionally $a\to b(t,x,a)$ and $a\to\sigma(t,x,a)$ are $L$-Lipschitz
for all $(t,x)\in\TT\times\XX$, then the mapping $\Pi_{\Ff}^{\pbar_{3}}(\Omega;\AA)\ni\pi\to x^{\pi}\in\Ss_{\Ff}^{\pbar}(\Omega;\XX)$
is continuous.
\end{prop}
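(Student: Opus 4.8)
The plan is to prove part (i) by the standard Picard iteration / a priori estimate machinery for SDEs, adapted to the measure-valued drift and diffusion, and then to upgrade to the $\Ll^p$-bound on $C^\pi$ by a moment estimate combined with the growth hypotheses and the feasibility inequalities; part (ii) follows by a Gronwall-type stability estimate. For part (i), first I would fix the probability space, Brownian motion, and control $\pi$ as given, and define the averaged coefficients $\bar b(s,x,\omega)\deq\int_\AA b(s,x,a)\pi_s(\D a)(\omega)$ and $\bar\sigma(s,x,\omega)\deq\int_\AA\sigma(s,x,a)\pi_s(\D a)(\omega)$. By Assumption~\ref{assu:sde-baseline}(iii) these are $L$-Lipschitz in $x$ uniformly in $(s,\omega)$ (the Lipschitz constant passes through the integral against the probability measure $\pi_s$), and by Eqs.~(\ref{eq:baseline-growth-b-sigma}) they satisfy the linear-in-$|x|$ growth bound $|\bar b(s,x,\omega)|+|\bar\sigma(s,x,\omega)|\le L(1+|x|^{\pbar_1}+\int_\AA|a|^{\pbar_2}\pi_s(\D a))\le L(2+|x|+\int_\AA|a|^{\pbar_2}\pi_s(\D a))$ using $\pbar_1\le1$. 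The random "forcing" term $\int_\AA|a|^{\pbar_2}\pi_s(\D a)$ is the crucial object: $\pbar_3$-admissibility (Eq.~(\ref{eq:admissible})) plus Jensen with exponent $\pbar_3/(\pbar_2\pbar)\ge1$ (this is exactly inequality~(\ref{eq:pbar-bounds})) gives $\EE[\sup_s(\int_\AA|a|^{\pbar_2}\pi_s(\D a))^{\pbar}]<\infty$, so the forcing is in $\Ll^{\pbar}$. Then existence and pathwise uniqueness of $x^\pi\in\Ss_\Ff^{\pbar}(\Omega;\XX)$ follows from a standard contraction argument (or Picard iteration) on $\Ss_\Ff^{\pbar}$; the $\Ss_\Ff^{\pbar}$ moment bound itself comes from the Burkholder–Davis–Gundy and Gronwall lemmas applied to $\EE[\sup_{r\le t}|x_r^\pi|^{\pbar}]$, which closes because the forcing has finite $\pbar$-th moment.

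Next, to get $C^\pi\in\Ll^p(\Omega;\RR)$: from Eqs.~(\ref{eq:baseline-growth-costs}), $|C^\pi|\le L(1+\int_0^T(|x_t^\pi|^{p_1}+\int_\AA|a|^{p_2}\pi_t(\D a))\,\D t + |x_T^\pi|^{p_1})$, so $\Lnorm{C^\pi}p \lesssim 1 + \EE[\sup_t|x_t^\pi|^{p_1 p}]^{1/p} + \EE[\sup_t(\int_\AA|a|^{p_2}\pi_t(\D a))^p]^{1/p}$. The first moment term is finite because $p_1 p<\pbar$ (inequality~(\ref{eq:main-deriv-bounds}) gives $p_1<\pbar/p-1<\pbar/p$) and $x^\pi\in\Ss_\Ff^{\pbar}$; the second is finite because by Jensen $(\int_\AA|a|^{p_2}\pi_t(\D a))^p\le\int_\AA|a|^{p_2 p}\pi_t(\D a)$ and $p_2 p<\pbar\le\pbar_3$ so $\pbar_3$-admissibility (with the exponent-$r$ version, $r=p_2p<\pbar_3$, or a further Jensen step) controls it. So the two growth-exponent constraints in Eq.~(\ref{eq:main-deriv-bounds}) together with $p<\pbar\le\pbar_3$ are exactly what is needed here. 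This establishes Eq.~(\ref{eq:x-main-estimate}) and hence that the tuple is a $p$-feasible vague controlled solution by Definition~\ref{def:vagconsol}.

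For part (ii), given two admissible controls $\pi,\pi'$ with the additional hypothesis that $a\to b(t,x,a)$, $a\to\sigma(t,x,a)$ are $L$-Lipschitz, I would write the difference $x_t^\pi-x_t^{\pi'}$ as an integral equation and split the integrand into a part that is $L$-Lipschitz in the state difference (handled by Gronwall after BDG) and a "source" term of the form $\int_0^t\int_\AA b(s,x_s^{\pi'},a)(\pi_s-\pi_s')(\D a)\,\D s$ plus its diffusion analogue. The metric on $\Pi_\Ff^{\pbar_3}(\Omega;\AA)$ (which the excerpt references but whose precise definition presumably appears in Section~\ref{sec:model}'s surrounding text) should be chosen so that this source term is controlled by the distance $d(\pi,\pi')$, using the Lipschitz-in-$a$ property to bound $|\int_\AA b(s,x,a)(\pi_s-\pi_s')(\D a)|$ by a Wasserstein-type or dual-Lipschitz distance between $\pi_s$ and $\pi_s'$; then a Gronwall estimate yields $\Snorm{x^\pi-x^{\pi'}}{\pbar}\le K\,d(\pi,\pi')$, giving continuity (in fact local Lipschitz continuity).

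The main obstacle I anticipate is not the existence/uniqueness scheme, which is routine, but the careful bookkeeping of exponents: verifying that every application of Jensen's inequality against $\pi_s$, of Hölder in time, and of BDG in the moment estimates stays within the budget set by $\pbar,\pbar_2,\pbar_3,p_1,p_2,p$ and the feasibility inequalities~(\ref{eq:feasible-p}) — in particular handling the superlinear case $p_1,p_2>1$ where the naive estimates fail and one must genuinely use $\pbar>p(1+p_1)\vee p(1+p_2)$. A secondary technical point is confirming that the averaged coefficients $\bar b,\bar\sigma$ are progressively measurable in $(s,\omega)$ and jointly measurable/continuous enough in $x$ for the classical SDE existence theorem to apply verbatim; this should follow from progressive measurability of $\pi$ together with the continuity of $b,\sigma$ in $(t,x)$ and measurability in $a$ assumed in Definition~\ref{def:vagconsol}, via a monotone-class or approximation argument.
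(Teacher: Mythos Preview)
Your proposal is correct and follows essentially the same line as the paper: check that the averaged coefficients are $L$-Lipschitz in $x$, verify the requisite $\Ll^{\bar p}$-integrability of the coefficients at the origin via the growth bounds and $\bar p\,\bar p_2\le\bar p_3$, invoke a standard Lipschitz SDE existence result (the paper cites \cite[Theorem~3.17]{Pardoux2014} rather than redoing the Picard argument), and then track the exponents $pp_1<\bar p$, $pp_2<\bar p_3$ to place $C^\pi$ in $\Ll^p$. The paper's proof in fact stops after part~(i) and never addresses~(ii), so your Gronwall stability sketch for~(ii) already goes beyond what the paper supplies; your caution about the metric on $\Pi_\Ff^{\bar p_3}(\Omega;\AA)$ is also warranted, since the paper never defines it.
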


In order to state the risk aware control problem, we need to first
establish some basic properties of risk functions. We collate our
discussions on their properties in the next section, where we first
describe the subset of risk functions that can be used to evaluate
the risk associated with $C^{\pi}$ when $\pi\in\VvV^{p}(b,\sigma,\nu)$.

\section{\label{sec:risk-functions}Risk functions}

\paragraph*{Risk aware objective function}

Given the definition of feasible vague controlled solutions, $\pi\in\VvV^{p}(b,\sigma,\nu)$,
$p\in[1,\infty)$, and the cost functional $C^{\pi}$, the risk neutral
control problem could now be simply stated as
\begin{gather*}
\pP_{0}:\qquad\inf_{\pi\in\VvV^{1}(b,\sigma,\nu)}\EE_{\pi}[C^{\pi}],
\end{gather*}
where we have written the expectation as $\EE_{\pi}$ to highlight
the fact that the probability space, and in particular the probability
measure used to compute the expectation, is a part of the vague controlled
solution $\pi\in\VvV^{p}(b,\sigma,\nu)$. This problem statement does
not trivially generalize to the risk aware case: Here, we presume
we are given a risk function $\rho:\Ll^{p}(\Omega;\RR)\to\RR$, $p\in[1,\infty)$,
defined on some unspecified probability space $(\Omega,\Sigma,\PP)$,
mapping an $\Ll^{p}(\Omega;\RR)$ random variable to a real-valued
measure of risk that quantifies the variability associated with this
random variable. Since in general the probability space for a given
$\rho$ is fixed, we cannot use $\rho$ to evaluate the risk of $C^{\pi}$
when the probability space potentially varies with each $\pi\in\VvV^{p}(b,\sigma,\nu$). 

To remedy this issue, note first that the risk neutral problem, Problem
$\pP_{0}$, makes sense since the expectation does not depend on the
particulars of the underlying probability space, but rather only on
the distributions of the random variables. This is to say, for any
two $\Ll^{1}$-random variables $X$ and $\tilde{X}$, defined on
different probability spaces $(\Omega,\Sigma,\PP)$ and $(\tilde{\Omega},\tilde{\Sigma},\tilde{\PP})$,
we have that $\EE_{\PP}[X]=\EE_{\tilde{\PP}}[\tilde{X}]$ whenever
the laws of $X$ and $\tilde{X}$ agree. In order to generalize Problem~$\pP_{0}$
to the risk aware case, we restrict ourselves to risk functions having
this same, law invariance property:
\begin{defn}
\label{def:law-invariance}Let $(\Omega,\Sigma,\PP)$ be a probability
space. A mapping $\phi\text{ : }\Ll^{p}(\Omega;\RR)\rightarrow\RR$,
$p\in[1,\infty]$, is \emph{law invariant }if there is a $\psi:\Pp^{p}(\RR)\to\RR$
such that $\phi(U)=\psi(\lL(U))$ for all $U\in\Ll^{p}(\Omega;\RR)$.
\end{defn}

Law invariant risk functions have been extensively studied in the
literature, in particular they admit well-known and widely exploited
representation theorems \cite{Kusuoka2001,Frittelli2005,Jouini2006}.
Here however, the law invariance property allows us to state the risk
aware version of Problem~$\pP_{0}$. For any law invariant $\rho:\Ll^{p}(\Omega;\RR)\to\RR$,
$p\in[1,\infty]$, we define Problem $\pP_{1}$ as
\begin{gather*}
\pP_{1}:\qquad\inf_{\pi\in\VvV^{p}(b,\sigma,\nu)}\rho(C^{\pi}).
\end{gather*}
We adopt the view that a law invariant risk function can be equivalently
seen as a mapping from $\Ll^{p}(\Omega;\RR)$-random variables to
reals, or as a function from $\Pp^{p}(\RR)$-measures to reals. This
latter representation of risk functions has been used also in previous
works, see e.g. \cite{Haskell15,Isohatala17_smdp,Isohatala2018_subm}.
We emphasize that here, we consider the expression of risk for random
variables and measures on equal footing: While viewing $\rho$ exclusively
as a function from measures to reals is appealing in its simplicity,
in doing so we would firstly lose some convexity and coherence properties
that are better defined for $\Ll^{p}(\Omega;\RR)$-functionals (see
e.g. Definition~\ref{def:coherent} below). Indeed, it is quite often
true that if a risk function $\rho:\Ll^{p}(\Omega;\RR)\to\RR$ is
convex, then its representation as a function $\tilde{\rho}:\Pp^{p}(\RR)\to\RR$,
$\tilde{\rho}(\lL(X))=\rho(X)$ for all $X\in\Ll^{p}(\Omega;\RR)$
is \emph{concave} \cite{Acciaio2013} -- this has obvious implications
for minimization of such functionals. Secondly, we will in the following
need some notion of differentiability of risk functions. Functional
differentiation is more readily defined on the Banach spaces $\Ll^{p}(\Omega;\RR)$,
and moreover, an appropriate theory has recently been developed in
the context of mean field games and McKean-Vlasov problems \cite{Carmona2018_I,Carmona2018_II}.
In their context, differentiability with respect to distributions
was needed for treating the mean field, i.e. distribution dependent
terms in the model equations.

In the following, we will go back and forth between representations
of a risk function as a mapping over random variables or measures.
What we mean by this is formalized in the following definition:
\begin{defn}
\label{def:pp-lp-representations}Let $(\Omega,\Sigma,\PP)$ be a
probability space, $p\in[1,\infty]$, and let $\VV$ be a metric space.
(\emph{i}) Suppose $\phi\text{ : }\Ll^{p}(\Omega;\VV)\rightarrow\RR$
is a law invariant mapping. A function $\psi:\Pp^{p}(\VV)\to\RR$
is a \emph{$\Pp^{p}$-representation of $\phi$} if $\phi(U)=\psi(\lL(U))$
for all $U\in\Ll^{p}(\Omega;\VV)$. (\emph{ii}) Suppose $\psi:\Pp^{p}(\VV)\to\RR$.
If there is a probability space $(\tilde{\Omega},\tilde{\Sigma},\tilde{\PP})$
and a function $\phi\text{ : }\Ll^{p}(\tilde{\Omega};\VV)\rightarrow\RR$
such that $\psi(\lL(U))=\phi(U)$ for all $U\in\Ll^{p}(\tilde{\Omega};\VV)$,
then we say $\phi$ is an \emph{$\Ll^{p}(\tilde{\Omega})$-representation
of $\psi$}.
\end{defn}

In order to impose more structure on the set of risk functions we
consider, some of the following properties, frequently considered
in the literature \cite{Artzner99,Follmer02}, are assumed. These
properties are more naturally defined for the $\Ll^{p}(\Omega)$-representation
of the risk function.
\begin{defn}
\label{def:coherent}Let $(\Omega,\Sigma,\PP)$ be a probability space
and denote $\Ll\deq\Ll^{p}(\Omega;\RR)$, $p\in[1,\infty]$. (\emph{i})
\emph{Monotonicity}: for all $X_{1},X_{2}\in\Ll$ such that $X_{1}\leq X_{2}$
almost surely, $\rho(X_{1})\leq\rho(X_{2})$. (\emph{ii}) \emph{Convexity}:
$\rho(\alpha X_{1}+(1-\alpha)X_{2})\leq\alpha\rho(X_{1})+(1-\alpha)\rho(X_{2})$
for all $X_{1},X_{2}\in\Ll$ and $\alpha\in[0,1]$. (\emph{iii}) \emph{Positive
homogeneity}: $\rho(aX)=a\rho(X)$ for all $a\geq0$ and $X\in\Ll$.
(\emph{iv}) \emph{Translation invariance}: $\rho(X+a)=\rho(X)+a$
for all $a\in\RR$ and $X\in\Ll$. (\emph{v}) If the risk function
satisfies (\emph{i}--\emph{iv}), it is called \emph{coherent}.
\end{defn}

\paragraph*{\label{subsec:rho-diffability}Differentiability of risk functions}

We begin by recalling the following, standard definitions of functional
derivatives.
\begin{defn}
Let $\VV$ and $\UU$ be real Banach spaces. (\emph{i}) For any $f:\VV\to\RR$,
the \emph{subdifferential} of $f$ at $X\in\VV$, denoted $\SD f(X)$,
is the set $\partial f(X)=\{Y\in\VV^{\ast}:\langle Y,X^{\prime}-X\rangle\leq f(X^{\prime})-f(X),\,\forall X^{\prime}\in\VV\}$.
The function is \emph{subdifferentiable} at $X\in\VV$ if $\partial f(X)\neq\emptyset$.
(\emph{ii}) For any $F:\VV\to\UU$, the \emph{directional derivative}
of $F$ at $X\in\VV$ in direction $Y\in\VV$, denoted $\DF_{Y}F(X)$,
is defined as
\begin{gather}
\DF_{Y}F(X)\deq\frac{\D}{\D\epsilon}F(X+\epsilon Y)\biggl\vert_{\epsilon=0}=\lim_{\epsilon\to0}\frac{F(X+\epsilon Y)-F(X)}{\epsilon}\label{eq:dir-deriv}
\end{gather}
if the limit exists. Further, we will say that the function $F$ is
\emph{Gâteaux differentiable} at $X\in\VV$ if the above limit exists
for all $Y\in\VV$ and if the mapping $Y\to\DF_{Y}F(X)$ is linear.
(\emph{iii}) The function $F:\VV\to\UU$ is \emph{Fréchet differentiable}
at $X\in\VV$ if there is a continuous linear operator $\DF F(X)\in\VV^{\ast}$,
the Fréchet derivative, such that
\begin{gather*}
F(X+Y)-F(X)-\langle\DF F(X),Y\rangle\in o\left(\Vert Y\Vert_{\VV}\right),
\end{gather*}
where $\Vert\cdot\Vert_{\VV}$ denotes the norm on $\VV$.
\end{defn}

\begin{rem}
\label{rem:no-diff}A coherent risk function $\rho:\Ll^{p}(\Omega;\RR)\to\RR$
that is nonlinear cannot be everywhere Gâteaux or Fréchet differentiable.
Specifically, $\rho$ cannot be Fréchet differentiable at $X=0$,
if positive homogeneity, Definition~\ref{def:coherent}(\emph{iii}),
holds. Then $\DF_{Y}\rho(0)=\D[\rho(\epsilon Y)]/\D\epsilon|_{\epsilon=0}=\rho(Y)$,
which is not linear; this was earlier pointed out in \cite[Proposition 3.1]{Fischer2003}.
Relaxing the assumption that the Gâteaux derivative must be linear
would remove the issue, but everywhere Fréchet differentiability of
$\rho$ is nonetheless not possible. In Section~\ref{sec:exprob}
we demonstrate, by way of an example, that a risk function can be
shown to be differentiable at the cost random variable $C^{\pi}$.
We also show that e.g. the entropic risk measure, frequently encountered
in the literature, is everywhere Fréchet differentiable, and that
additionally, it may be possible to approximate a risk function with
another, everywhere Fréchet differentiable functional.
\end{rem}

We can now define a useful notion of a derivative of a law invariant
risk function. Here, we use the definition used in e.g. \cite{Carmona2015_FBSDE_McKV,Carmona2018_I,Carmona2018_II},
which we extend slightly to cover $\Ll^{p}(\Omega;\RR)$ spaces, $p\in[1,\infty]$,
not just $\Ll^{2}(\Omega;\RR)$. The definition relies on the Fréchet
differentiability of the $\Ll^{p}(\Omega)$-representation of the
function.
\begin{defn}
\label{def:diffability-wrt-measures}Let $\phi:\Pp^{p}(\RR^{n})\to\RR$,
$n\in\NN$, $p\in[1,\infty]$, and suppose there is a probability
space $(\Omega,\Sigma,\PP)$ and an $\Ll^{p}(\Omega$)-representation
of $\phi$, denoted $\psi$. (\emph{i}) We say the function $\phi$
is \emph{$\Ll$-differentiable} at $\mu_{0}\in\Pp^{p}(\Reals^{n})$
if its $\Ll^{p}(\Omega$)-representation $\psi$ is Fréchet differentiable
at any point $U_{0}\in\Ll^{p}(\Omega;\RR^{n})$ such that $\lL(U_{0})=\mu_{0}$. 

(\emph{ii}) The function $\phi$ is \emph{continuously $\Ll$-differentiable},
if the Fréchet derivative of $\psi$ as seen as a function $\Ll^{p}(\Omega;\RR^{n})\ni X\to\DF\psi(X)\in\Ll^{q}(\Omega;\RR^{n})$,
$q=p/(p-1)$, is continuous. 

(\emph{iii}) Given $\mu\in\Pp^{p}(\RR^{n})$, we say the function
$f:\RR^{n}\to\RR^{1\times n}$ is an \emph{$\Ll$-derivative }of $\phi$
at $\mu$, if the Fréchet derivative of $\psi$, $\DF\psi(X)\in\Ll^{q}(\Omega;\RR^{n})$
is such that $\DF\psi(X)(\omega)=f(X(\omega))$ for $\PP$-almost
all $\omega\in\Omega$, implying that $\langle\DF\psi(X),Y\rangle=\langle f(X),Y\rangle$
for all $X,Y\in\Ll^{p}(\Omega;\RR)$ such that $\lL(X)=\mu$. We will
denote a representative $\Ll$-derivative by $\DF\phi(\mu)$.
\end{defn}

We have the following result concerning the existence of $\Ll$-derivatives.
It demonstrates that $\Ll$-derivatives commonly exist, and are not
limited to exceptional cases of risk functions.
\begin{prop}
\label{prop:L-deriv}Suppose $\phi:\Pp^{p}(\RR^{n})\to\RR$, $n\in\NN$,
$p\in[2,\infty]$, is continuously $\Ll$-differentiable. Then an
$\Ll$-derivative exists, and is unique in the sense that if $f_{1}$
and $f_{2}$ are $\Ll$-derivatives at $\mu\in\Pp^{p}(\RR^{n})$,
then $f_{1}(x)=f_{2}(x)$ for $\mu$-almost every $x\in\RR^{n}$. 
\end{prop}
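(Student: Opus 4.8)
The plan is to prove the two assertions—existence and essential uniqueness of the $\Ll$-derivative—separately, with existence being the routine part and uniqueness the step requiring some care with the interplay of law invariance and the structure of $\Ll^{p}$ spaces.

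For \emph{existence}, I would start from the hypothesis that $\phi$ is continuously $\Ll$-differentiable, so that its $\Ll^{p}(\Omega)$-representation $\psi$ is everywhere Fr\'echet differentiable with $\DF\psi(X)\in\Ll^{q}(\Omega;\RR^{n})$, $q=p/(p-1)$, and the map $X\mapsto\DF\psi(X)$ is continuous. Fix $\mu\in\Pp^{p}(\RR^{n})$ and pick any $X_{0}\in\Ll^{p}(\Omega;\RR^{n})$ with $\lL(X_{0})=\mu$. The goal is to show $\DF\psi(X_{0})$, as an element of $\Ll^{q}$, factors through $X_{0}$, i.e. there is a Borel $f:\RR^{n}\to\RR^{1\times n}$ with $\DF\psi(X_{0})(\omega)=f(X_{0}(\omega))$ $\PP$-a.s. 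The standard argument (as in \cite{Carmona2018_I,Carmona2018_II}) uses law invariance of $\psi$: for any measure-preserving transformation $\tau$ of $(\Omega,\Sigma,\PP)$ one has $\psi(X\circ\tau)=\psi(X)$, and differentiating this identity in $X$ gives $\DF\psi(X_{0}\circ\tau)=\DF\psi(X_{0})\circ\tau$ whenever $X_{0}\circ\tau$ and $X_{0}$ have the same law—in particular this pins down $\DF\psi(X_{0})$ as measurable with respect to $\sigma(X_{0})$ up to $\PP$-null sets, after passing to a sufficiently rich probability space (e.g. an atomless one, or $[0,1]$ with Lebesgue measure, which one may assume WLOG since $\phi$ is defined on measures and any $\Ll^{p}(\tilde\Omega)$-representation will do). Then the Doob--Dynkin lemma yields the Borel function $f$ with $\DF\psi(X_{0})=f(X_{0})$ a.s.; this $f$ is the claimed $\Ll$-derivative $\DF\phi(\mu)$.

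For \emph{uniqueness}, suppose $f_{1},f_{2}$ are both $\Ll$-derivatives at $\mu$, so $\DF\psi(X)(\omega)=f_{i}(X(\omega))$ $\PP$-a.s. for every $X$ with $\lL(X)=\mu$, $i=1,2$. Applying this with the \emph{same} $X$, we get $f_{1}(X(\omega))=f_{2}(X(\omega))$ for $\PP$-a.a. $\omega$, hence $f_{1}=f_{2}$ holds $\lL(X)$-a.e., i.e. $\mu$-a.e. The only subtlety is that the Fr\'echet derivative is a well-defined element of $\Ll^{q}$ (unique up to $\PP$-null sets) precisely because $\psi$ is Fr\'echet differentiable—so $\DF\psi(X)$ is genuinely a single equivalence class, and both $f_{1}(X)$ and $f_{2}(X)$ represent it. I would note that here the hypothesis $p\geq 2$ (so $q\leq 2\leq p$) guarantees $\DF\psi(X)\in\Ll^{q}\supseteq\Ll^{p}$ interacts correctly with the pairing against all $Y\in\Ll^{p}$, and in particular that the $\Ll^{q}$ class is uniquely determined by the action of the derivative on $\Ll^{p}$; this is where the restriction to $p\in[2,\infty]$ (rather than $p\in[1,\infty]$) enters, since for $p<2$ one would have $q>2>p$ and the representation need not be as well-behaved.

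The main obstacle I anticipate is the existence half: making the "differentiate the law-invariance identity" argument fully rigorous requires either (a) invoking the now-standard machinery on differentiability of functions of measures and the fact that a law-invariant Fr\'echet-differentiable functional has a derivative of the form $f(X)$—citable from \cite{Carmona2018_I,Carmona2018_II}—or (b) reproving it via an approximation by simple random variables and the measure-preserving-transformation trick, which needs the underlying space to be atomless. Since $\phi$ is a function on $\Pp^{p}(\RR^{n})$, one is free to choose any convenient $\Ll^{p}(\Omega)$-representation, so assuming $(\Omega,\Sigma,\PP)$ atomless (equivalently, supporting a uniform random variable) is harmless; the continuity of $X\mapsto\DF\psi(X)$ in the hypothesis is what lets one pass from the identity on a dense set of "nice" $X$ to all $X$ with $\lL(X)=\mu$. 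Everything else—Doob--Dynkin, the a.e.\ identification for uniqueness—is routine.
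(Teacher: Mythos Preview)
Your proposal is correct, and your uniqueness argument is the standard one. The existence argument you sketch---law invariance forces $\DF\psi(X_{0})$ to be $\sigma(X_{0})$-measurable via the measure-preserving-transformation trick, then Doob--Dynkin---is essentially the content of the Carmona result you cite, so you are on solid ground either way.

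The paper, however, takes a much shorter route: it simply cites \cite[Proposition 5.25]{Carmona2018_I} for the case $p=2$ and then reduces $p\in(2,\infty]$ to that case by invoking the continuous embedding $\Ll^{p}(\Omega;\RR^{n})\hookrightarrow\Ll^{2}(\Omega;\RR^{n})$. That is, continuity of $X\mapsto\DF\psi(X)$ in $\Ll^{p}$ transfers to continuity in $\Ll^{2}$ via the embedding, Carmona's Hilbert-space result then gives an $\Ll$-derivative $f$ with $f(X)\in\Ll^{2}$, and since $q=p/(p-1)\leq 2$ one automatically has $f(X)\in\Ll^{q}$ as required. So the restriction $p\geq 2$ is used in the paper not for the dual-space reason you suggest, but simply to make the embedding go the right way and land back in the $p=2$ setting where the hard work is already done. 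Your direct approach is more self-contained and would in principle cover other values of $p$; the paper's approach is a two-line reduction that buys brevity at the cost of depending on the cited black box.
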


Our main use of the $\Ll$-derivative is in evaluating the first-order
response of functions of probability measures. If $\mu,\mu_{0}\in\Pp^{p}(\RR^{n})$,
then for any random variables $U,U_{0}\in\Ll^{p}(\Omega;\RR^{n})$,
$p\in[1,\infty]$, whose laws equal $\mu$ and $\mu_{0}$, respectively,
we get the following expansion directly from the definitions of the
Fréchet- and $\Ll$-derivatives:
\begin{gather}
\phi(\mu)=\phi(\mu_{0})+\EE[\DF\phi(\mu_{0})(U_{0})(U-U_{0})]+o\left(\Lnorm{U-U_{0}}p\right),\label{eq:phi-expansion}
\end{gather}
where $\EE[\DF\phi(\mu_{0})(U_{0})(U-U_{0})]=\langle\DF\phi(\mu_{0})(U_{0}),U-U_{0}\rangle$.
\begin{rem}
The need for the notion of $\Ll$-differentiability ultimately arises
from the use of vague controls and weak solutions of the stochastic
differential equations, that is, vague controlled solutions in the
sense of Definition~\ref{def:vagconsol}. If it were possible to
always consider a fixed probability space, there would not be a need
for the notion of $\Ll$-differentiability, and we could instead solely
use the Fréchet derivative on a fixed $\Ll^{p}(\Omega;\RR)$ space
to construct the first order responses of the form given in Eq.~(\ref{eq:phi-expansion}). 
\end{rem}

Differentiability can subsequently be used to construct a notion of
convexity of a real-valued function of probability measures $\mu\in\Pp^{p}(\VV)$,
where $\VV$ is a Banach space and $p\in[1,\infty]$, without needing
to impose vector space structure on $\Pp^{p}(\VV)$.
\begin{defn}
\label{def:diff-convexity}An $\Ll$-differentiable function $\phi:\Pp^{p}(\RR^{n})\to\RR_{\infty}$,
$n\in\NN$, $p\in[1,\infty]$, with the $\Ll$-derivative $\DF\phi:\Pp^{p}(\RR^{n})\times\RR^{n}\to\RR$
is \emph{$\Ll$-convex} if 
\begin{gather*}
\phi(\mu')-\phi(\mu)-\EE[\DF\phi(\mu)(U)(U'-U)]\geq0\quad\forall\mu,\mu'\in\Pp^{p}(\RR^{n})
\end{gather*}
and where $U,U'\in\Ll^{p}(\Omega;\RR^{n})$ are any random variables
over some probability space $(\Omega,\Sigma,\PP)$ such that $\lL(U)=\mu$
and $\lL(U')=\mu'$.
\end{defn}

It is straight-forward to verify that for a law invariant, $\Ll$-differentiable
risk function $\rho\in\Ll^{p}(\Omega;\RR)\to\RR$ with an $\Ll$-derivative
$\DF\rho(\cdot)(\cdot):\Pp^{p}(\RR)\times\RR\to\RR$, convexity in
the sense of Definition~\ref{def:coherent}(\emph{ii}) implies $\Ll$-convexity,
i.e. the notion of convexity in Definition~\ref{def:diff-convexity}.

For brevity of notations, in the following we will for all law-invariant
functions $\rho:\Ll^{p}(\Omega;\RR^{n})\to\RR$, $p\in[1,\infty]$,
denote its $\Pp^{p}$-representation by the same symbol $\rho$ --
which function we mean will always be clear from its arguments. With
the above definitions, we can now state the necessary assumptions
regarding the risk functions we consider.
\begin{assumption}
\label{assu:baseline-rho}The risk function $\rho$ is such that:
(\emph{i}) $\rho:\Ll^{p}(\Omega;\RR)\to\RR$, $p\in[1,\infty)$, and
$\rho$ is law invariant. (\emph{ii}) The risk function $\rho$ has
an $\Ll$-derivative on some open subset $\Pp^{\prime}\subset\Pp^{p}(\RR)$.
(\emph{iii}) The law of the cost functional is in $\Pp^{\prime}$,
i.e. $\lL(C^{\pi})\in\Pp^{\prime}$ for all $\pi\in\VvV^{p}(b,\sigma,\nu)$.
\end{assumption}

These assumptions simply assert that $\rho$ is differentiable over
a sufficiently large set of random variables. Note that $\Ll$-convexity
is not yet assumed; it will be needed when we state conditions that
are sufficient for optimality of controls.

The probabilistic formulation of the risk aware problem relies on
Assumption~\ref{assu:baseline-rho}, and in particular on the existence
of Fréchet derivatives of the risk function. In general, the question
of the Fréchet differentiability of a function defined over an infinite
dimensional Banach space is a rather complicated one, and presently,
there does not appear to be a well-developed theory of Fréchet differentiability
of risk functions. When the underlying Banach spaces are Asplund spaces,
say in particular if we consider the Hilbert space $\Ll^{2}(\Omega;\RR)$,
Fréchet differentiability is guaranteed at least over a dense $G_{\delta}$-subset
of the space, see e.g. \cite{Phelps1989}. This is somewhat unsatisfactory,
since here we would like to be able to say whether or not a risk function
is differentiable at a specific random variable we have in mind. A
broad treatment of the differentiability of risk functions is beyond
the scope of this paper, however, in Section~\ref{sec:exprob} we
present examples of non-trivial risk functions (i.e. risk functions
that are not simply the expectation) that are in fact differentiable
over a sufficiently large subset of the space. 
\begin{rem}
While the question of the existence of Fréchet derivatives is open,
more is known about the Gâteaux differentiability of risk functions.
For any risk function $\rho:\Ll^{p}(\Omega;\RR)\to\RR$, $p\in[1,\infty]$,
we denote $\dom\rho\deq\{X\in\Ll^{p}(\Omega;\RR):\rho(X)<\infty\}$,
and we say $\rho$ is proper if $\dom\rho\neq\emptyset$. We then
have that a proper, coherent risk function is continuous and subdifferentiable
in the interior of its domain \cite{Ruszczynski06}. In addition,
if $\rho$ is continuous at $X\in\Ll^{p}(\Omega;\RR)$ and the subdifferential
is a singleton, then $\rho$ is Gâteaux differentiable at $X$, and
the mapping $\Ll^{p}\ni Y\to\DF_{Y}\rho(X)$ is continuous (in fact
$\rho$ is differentiable in the somewhat stronger sense of Hadamard
\cite{Bonnans2000}). The Gâteaux differentiability of distortion
risk measures was shown in \cite{Marinacci2004}. Although excluded
from the published version, the Fréchet differentiability was also
discussed in an earlier working paper version of the work, see \cite{Marinacci2002}.
We refer the reader to the work \cite{Lindenstrauss2012} for recent
advances in Fréchet differentiability of convex Lipschitz functions,
such as coherent risk functions over $\Ll^{\infty}(\Omega;\RR)$.
\end{rem}

\section{\label{sec:probform}Risk aware minimum principle}

\paragraph*{Main results}

We begin by stating our risk aware generalization of the stochastic
Pontryagin's minimum principle for Problem~$\pP_{1}$. We denote
$\YY\deq\RR^{1\times d_{x}},$ $\YY^{\prime}\deq\RR$, and $\ZZ\deq\RR^{d_{w}\times d_{x}}$,
and define the Hamiltonian $H$ as
\begin{align}
H(t,x,y,y',z,a) & \deq yb(t,x,a)+y^{\prime}c(t,x,a)+\trace[z\sigma(t,x,a)]\nonumber \\
 & \qquad\forall(t,x,y,y^{\prime},z,a)\in\TT\times\XX\times\YY\times\YY^{\prime}\times\ZZ\times\AA.\label{eq:relaxed-hamiltonian-1}
\end{align}
Note that compared to the risk neutral case, the term involving the
cost rate function has been modified to feature an additional adjoint
variable $y^{\prime}\in\YY^{\prime}$. We shall elaborate on this
significant point later in more detail. We will give both necessary
and sufficient conditions for the $\pP_{1}$-optimality of a control
$\pi\in\VvV^{p}(b,\sigma,\nu)$. For sufficiency, we need an additional
convexity assumption.
\begin{assumption}
\label{assu:sufrel}Suppose Assumptions~\ref{assu:sde-baseline}
and~\ref{assu:baseline-rho} hold, and that additionally: (\emph{i})
the functions $x\to g(x)$ and 
\begin{gather*}
(x,\pi)\to\int_{\AA}H(t,x,y,y^{\prime},z,a)\pi(\D a)
\end{gather*}
are convex for all $(t,y,y^{\prime},z)\in\TT\times\YY\times\YY^{\prime}\times\ZZ$,
and (\emph{ii}) the risk function $\rho$ is $\Ll$-convex.
\end{assumption}

The risk aware minimum principle can then be stated as follows.
\begin{thm}[Risk aware minimum principle]
 \label{thm:ra-min-principle}(\emph{i}) Suppose $\rho:\Pp^{p}(\RR)\to\RR$,
$p\in[1,\infty)$, satisfies Assumption~\ref{assu:baseline-rho}.
If $\pi\in\VvV^{p}(b,\sigma,\nu)$ is $\pP_{1}$-optimal, then there
exists unique $\Ff$-adapted continuous processes $y^{\pi}\in\Ss_{\Ff}^{\pbar/(\pbar-1)}(\Omega;\YY)$
and $y^{\prime\,\pi}\in\Ss_{\Ff}^{p/(p-1)}(\Omega;\YY^{\prime})$,
and a unique $\Ff$-predictable $z^{\pi}\in\Hh_{\Ff}^{\pbar/(\pbar-1)}(\Omega;\ZZ)$
that satisfy the backward stochastic differential equation
\begin{gather}
\D y_{t}^{\pi}=-\nabla_{\XX}H(t,x_{t}^{\pi},y_{t}^{\pi},y_{t}^{\prime\,\pi},z_{t}^{\pi},\pi_{t})\,\D t+z_{t}^{\pi}\cdot\D w_{t},\label{eq:y-bsde}\\
y_{T}^{\pi}=y_{T}^{\prime\,\pi}\nabla_{\XX}g(x_{T}^{\pi}),\nonumber 
\end{gather}
and the representation
\begin{gather}
y_{t}^{\prime\,\pi}=\EE\bigl[\DF\rho(C^{\pi})\bigm|\Ff_{t}\bigr]\quad\forall t\in\TT.\label{eq:y-prime}
\end{gather}
Moreover, the Hamiltonian of Eq.~(\ref{eq:relaxed-hamiltonian-1})
is optimized in the sense that
\begin{align}
\int_{\AA}H(t,x_{t}^{\pi},y_{t}^{\pi},y_{t}^{\prime\,\pi},z_{t}^{\pi},a)\pi_{t}(\D a) & =\inf_{q_{t}\in\Pp^{\pbar_{3}}(\AA)}\int_{\AA}H(t,x_{t}^{\pi},y_{t}^{\pi},y_{t}^{\prime\,\pi},z_{t}^{\pi},a)q_{t}(\D a)\label{eq:H-inf-1}\\
 & \qquad\text{\ensuremath{\PP}-almost surely, for Lebesgue almost all \ensuremath{t\in\TT}.}\nonumber 
\end{align}
(\emph{ii}) Suppose the stronger assumption, Assumption~\ref{assu:sufrel}
holds. If $\pi\in\VvV^{p}(b,\sigma,\nu)$, and if there exist processes
$y^{\pi}\in\Ss_{\Ff}^{\pbar/(\pbar-1)}(\Omega;\YY)$, $y^{\prime\,\pi}\in\Ss_{\Ff}^{p/(p-1)}(\Omega;\YY^{\prime})$,
$z^{\pi}\in\Hh_{\Ff}^{\pbar/(\pbar-1)}(\Omega;\ZZ)$ satisfying Eqs.~(\ref{eq:y-bsde},
\ref{eq:y-prime}, \ref{eq:H-inf-1}), then $\pi$ is $\pP_{1}$-optimal.
\end{thm}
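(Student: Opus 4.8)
The plan is to follow the classical variational recipe for the stochastic minimum principle, adapted to vague controls and to the risk-aware objective via the $\Ll$-derivative expansion in Eq.~(\ref{eq:phi-expansion}). For part~(\emph{i}), fix a $\pP_{1}$-optimal $\pi\in\VvV^{p}(b,\sigma,\nu)$ and an arbitrary competitor built as a convex perturbation $\pi_{t}^{\epsilon}\deq(1-\epsilon)\pi_{t}+\epsilon q_{t}$ for a progressively measurable $\Pp^{\pbar_{3}}(\AA)$-valued process $q=(q_{t})_{t\in\TT}$ and $\epsilon\in[0,1]$; Example~\ref{exa:first-order} and the continuity in Proposition~\ref{prop:basic-existence-uniqueness}(\emph{ii}) already indicate that such perturbations produce an $\Oo(\epsilon)$ (in fact well-behaved) first-order response in $\Ss_{\Ff}^{\pbar}(\Omega;\XX)$. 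First I would establish the \emph{first variation equation}: show that $x^{\pi^{\epsilon}}=x^{\pi}+\epsilon X+o(\epsilon)$ in $\Ss_{\Ff}^{\pbar}(\Omega;\XX)$, where $X=(X_{t})_{t\in\TT}$ solves the linear SDE obtained by differentiating Eq.~(\ref{eq:vague-sde}) in $\epsilon$, namely
\begin{gather*}
X_{t}=\int_{0}^{t}\Bigl\{\nabla_{\XX}\!\!\int_{\AA}\!b(s,x_{s}^{\pi},a)\pi_{s}(\D a)\,X_{s}+\int_{\AA}\!\bigl[b(s,x_{s}^{\pi},a)(q_{s}-\pi_{s})(\D a)\bigr]\Bigr\}\D s+\int_{0}^{t}\{\cdots\sigma\cdots\}\,\D w_{s}.
\end{gather*}
The required moment estimates here are standard Gr\"onwall/Burkholder--Davis--Gundy arguments under Assumption~\ref{assu:sde-baseline}, with the integrability of $q$ controlled by Eqs.~(\ref{eq:feasible-p}).

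Next I would expand the objective. Writing $C^{\pi^{\epsilon}}=C^{\pi}+\epsilon\,\Delta C+o(\epsilon)$ in $\Ll^{p}(\Omega;\RR)$, where
\begin{gather*}
\Delta C=\int_{0}^{T}\Bigl\{\nabla_{\XX}\!\!\int_{\AA}\!c(t,x_{t}^{\pi},a)\pi_{t}(\D a)\,X_{t}+\int_{\AA}\!c(t,x_{t}^{\pi},a)(q_{t}-\pi_{t})(\D a)\Bigr\}\D t+\nabla_{\XX}g(x_{T}^{\pi})X_{T},
\end{gather*}
I apply Eq.~(\ref{eq:phi-expansion}) with $\mu_{0}=\lL(C^{\pi})$ to get $\rho(C^{\pi^{\epsilon}})-\rho(C^{\pi})=\epsilon\,\EE[\DF\rho(C^{\pi})\,\Delta C]+o(\epsilon)\geq0$, hence $\EE[\DF\rho(C^{\pi})\,\Delta C]\geq0$. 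Now introduce the adjoint processes: let $(y^{\pi},z^{\pi})$ solve the linear BSDE~(\ref{eq:y-bsde}) with terminal datum $y_{T}^{\pi}=\DF\rho(C^{\pi})\,\nabla_{\XX}g(x_{T}^{\pi})$ — existence, uniqueness, and the stated integrability $y^{\pi}\in\Ss_{\Ff}^{\pbar/(\pbar-1)}$, $z^{\pi}\in\Hh_{\Ff}^{\pbar/(\pbar-1)}$ follow from standard linear BSDE theory once one checks the driver and terminal condition have the right moments (the terminal condition lies in $\Ll^{\pbar/(\pbar-1)}$ by H\"older, using $\DF\rho(C^{\pi})\in\Ll^{p/(p-1)}$ and $\nabla_{\XX}g(x_{T}^{\pi})\in\Ll^{\pbar/p_{1}'}$ together with Eqs.~(\ref{eq:feasible-p})). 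Applying It\^o's formula to $y_{t}^{\pi}X_{t}$ and taking expectations, the $X$-linear terms cancel by construction of the BSDE, leaving
\begin{gather*}
\EE\Bigl[\DF\rho(C^{\pi})\,\nabla_{\XX}g(x_{T}^{\pi})X_{T}\Bigr]=-\EE\int_{0}^{T}\!\Bigl\{\DF\rho(C^{\pi})\nabla_{\XX}\!\!\int_{\AA}\!c\,\pi_{t}(\D a)X_{t}+\int_{\AA}\!\mathcal{H}_{t}(a)(q_{t}-\pi_{t})(\D a)\Bigr\}\D t,
\end{gather*}
where $\mathcal{H}_{t}(a)\deq H(t,x_{t}^{\pi},y_{t}^{\pi},\DF\rho(C^{\pi}),z_{t}^{\pi},a)$; the delicate point is that the correct scalar multiplying $c$ in the Hamiltonian is the \emph{random variable} $\DF\rho(C^{\pi})$, not its conditional expectation. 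Substituting into $\EE[\DF\rho(C^{\pi})\,\Delta C]\geq0$ collapses everything to $\EE\int_{0}^{T}\int_{\AA}\mathcal{H}_{t}(a)\,(q_{t}-\pi_{t})(\D a)\,\D t\geq0$. To pass from $\DF\rho(C^{\pi})$ to $y_{t}^{\prime\,\pi}\deq\EE[\DF\rho(C^{\pi})\mid\Ff_{t}]$ inside the time integral, I use that $\mathcal{H}_{t}(a)$ is $\Ff_{t}$-measurable for a.e.\ $t$ (it is built from $x_{t}^{\pi},y_{t}^{\pi},z_{t}^{\pi}$, all predictable/adapted) together with the tower property and Fubini; this yields the defining relation~(\ref{eq:y-prime}) for $y^{\prime\,\pi}$ and rewrites the variational inequality as $\EE\int_{0}^{T}\int_{\AA}H(t,x_{t}^{\pi},y_{t}^{\pi},y_{t}^{\prime\,\pi},z_{t}^{\pi},a)(q_{t}-\pi_{t})(\D a)\,\D t\geq0$. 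Since $q$ is arbitrary, a standard measurable-selection / Lebesgue-point argument (the control set $\Pp^{\pbar_{3}}(\AA)$ is convex, and one localises to small time-space sets) upgrades the integrated inequality to the pointwise minimality~(\ref{eq:H-inf-1}) $\PP$-a.s.\ for a.e.\ $t$. The integrability $y^{\prime\,\pi}\in\Ss_{\Ff}^{p/(p-1)}$ is then Doob's maximal inequality applied to the martingale $\EE[\DF\rho(C^{\pi})\mid\Ff_{t}]$, since $\DF\rho(C^{\pi})\in\Ll^{p/(p-1)}$.

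For part~(\emph{ii}), I run the computation in reverse using convexity. Given any competitor $\tilde\pi\in\VvV^{p}(b,\sigma,\nu)$ with state process $\tilde x$, $\Ll$-convexity of $\rho$ (Assumption~\ref{assu:sufrel}(\emph{ii})) and Definition~\ref{def:diff-convexity} give $\rho(C^{\tilde\pi})-\rho(C^{\pi})\geq\EE[\DF\rho(C^{\pi})(C^{\tilde\pi}-C^{\pi})]$; then convexity of $g$ and of $(x,\pi)\mapsto\int_{\AA}H\,\pi(\D a)$ (Assumption~\ref{assu:sufrel}(\emph{i})) lets me bound the right-hand side below, after an It\^o expansion of $y_{t}^{\pi}(\tilde x_{t}-x_{t}^{\pi})$ analogous to part~(\emph{i}), by $\EE\int_{0}^{T}\int_{\AA}\bigl[H(\cdots,a)\tilde\pi_{t}(\D a)-H(\cdots,a)\pi_{t}(\D a)\bigr]\D t$, which is $\geq0$ by the Hamiltonian minimality~(\ref{eq:H-inf-1}). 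Hence $\rho(C^{\tilde\pi})\geq\rho(C^{\pi})$ and $\pi$ is $\pP_{1}$-optimal.

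The main obstacle I anticipate is \emph{not} the algebra of the It\^o expansion (that is routine) but two more delicate points: first, justifying the first-order expansion $C^{\pi^{\epsilon}}=C^{\pi}+\epsilon\Delta C+o(\epsilon)$ in the $\Ll^{p}$-norm — this requires uniform-in-$\epsilon$ higher moment bounds on $x^{\pi^{\epsilon}}$ in $\Ss_{\Ff}^{\pbar}$ with $\pbar>p$, which is exactly why the feasibility inequalities~(\ref{eq:feasible-p}) were imposed, and the superlinear growth of $c,g$ forces careful H\"older bookkeeping; and second, the passage from the integrated variational inequality to the pointwise Hamiltonian condition~(\ref{eq:H-inf-1}), where one must handle the fact that the perturbation family is measure-valued and progressively measurable — the convexity of $\Pp^{\pbar_{3}}(\AA)$ makes needle/spike variations unnecessary, but a clean measurable-selection argument producing, for each candidate "better" response, a legitimate progressively measurable $q$ still needs care. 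I expect these two steps to carry the bulk of the technical weight, with the identification of $y^{\prime\,\pi}$ via the tower property being the conceptually novel (but technically short) ingredient that distinguishes the risk-aware principle from its risk-neutral ancestor.
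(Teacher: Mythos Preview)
Your proposal is essentially correct and follows the same variational strategy as the paper: convex perturbation of the control, first-order expansion of the state and of the risk function via Eq.~(\ref{eq:phi-expansion}), adjoint duality via It\^o's formula, and then localisation to obtain the pointwise Hamiltonian condition; the sufficiency part~(\emph{ii}) likewise matches Lemma~\ref{lem:suff}. The one methodological difference is in how the adjoint processes are obtained: you invoke linear BSDE existence/uniqueness theory directly, whereas the paper (Lemma~\ref{lem:var-hamil}) constructs $(y^{\pi},z^{\pi})$ explicitly from the fundamental solution $(U_{t}^{\pi},V_{t}^{\pi})$ of the linearised SDE together with the martingale representation of $\Xi^{\pi}\deq D^{\pi}\nabla_{\XX}g(x_{T}^{\pi})U_{T}^{\pi}+D^{\pi}\int_{0}^{T}\nabla_{\XX}c\,U_{s}^{\pi}\,\D s$ and of $D^{\pi}$. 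The explicit route makes the integrability claims $y^{\pi}\in\Ss_{\Ff}^{\pbar/(\pbar-1)}$, $z^{\pi}\in\Hh_{\Ff}^{\pbar/(\pbar-1)}$ more transparent; your route is shorter but requires checking that the exogenous driver term $y_{t}^{\prime\,\pi}\nabla_{\XX}c(t,x_{t}^{\pi},\pi_{t})$ and the terminal datum $D^{\pi}\nabla_{\XX}g(x_{T}^{\pi})$ have exactly the right integrability for BSDE theory in those spaces, which is doable under Eqs.~(\ref{eq:feasible-p}) but not entirely trivial.

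One point to clean up: your displayed formula after the It\^o expansion of $y_{t}^{\pi}X_{t}$ is garbled. With the driver of Eq.~(\ref{eq:y-bsde}) (which already contains $y_{t}^{\prime\,\pi}\nabla_{\XX}c$, not $D^{\pi}\nabla_{\XX}c$), the $\nabla_{\XX}b$ and $\nabla_{\XX}\sigma$ terms cancel but the surviving drift is $-y_{t}^{\prime\,\pi}\nabla_{\XX}c\,X_{t}+y_{t}^{\pi}b(\cdot,q_{t}-\pi_{t})+\trace[z_{t}^{\pi}\sigma(\cdot,q_{t}-\pi_{t})]$, with no $c(q_{t}-\pi_{t})$ term yet; that term enters only when you add back the $D^{\pi}\int_{0}^{T}[\nabla_{\XX}c\,X_{t}+c(\cdot,q_{t}-\pi_{t})]\,\D t$ piece of $\Delta C$. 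It is precisely at this recombination step that the tower property replaces both $D^{\pi}\nabla_{\XX}c\,X_{t}$ and $D^{\pi}c(\cdot,q_{t}-\pi_{t})$ by their $y_{t}^{\prime\,\pi}$-versions (since these integrands are $\Ff_{t}$-measurable), after which the $\nabla_{\XX}c\,X_{t}$ terms cancel and you are left with the integrated Hamiltonian inequality~(\ref{eq:var-hamil}). Your final conclusion is correct; just be careful that $D^{\pi}$ never appears inside the BSDE driver itself.
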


\begin{rem}
\label{rem:no-aug}In some previous works on risk aware optimization
utilizing generic risk functions \cite{Haskell15,Isohatala17_smdp,Isohatala2018_subm},
a state space augmentation scheme was used to derive a computationally
viable form of the risk aware problem. However here, no augmentation
is necessary. This is in contrast to the earlier result where the
state space augmentation was an inextricable part of the end results.
It should also be emphasized that these earlier papers focused on
a convex analytic formulation of the problem whereas here, we consider
a purely probabilistic approach.
\end{rem}

\begin{rem}
It has also not escaped us that the Clark-Ocone theorem, c.f. \cite[Theorem 4.1]{DiNunno2009},
may be further used to characterize the process $(z_{t}^{\prime\,\pi})_{t\in\TT}$
in terms of the Malliavin derivatives of $\DF\rho(\lL(C^{\pi}))(C^{\pi})$.
However, we leave the exploration of this connection to future work.
\end{rem}

Intuitively, the risk aware minimum principle can be seen as a modification
of the risk neutral Pontryagin's minimum principle: Going from the
risk neutral to the risk aware case, an additional process $(y_{t}^{\prime\,\pi})_{t\in\TT}$
is introduced which acts as a rescaling or adjustment factor for given
cost rate and terminal cost functions $c$ and $g$. Moreover, as
per Eq.~(\ref{eq:y-prime}), the values $y_{t}^{\prime\,\pi}$, $t\in\TT$
of the process represent the controller's time $t\in\TT$ expectation
of the derivative of the risk function evaluated at the total cost
$C^{\pi}$. Indeed, if $\rho$ is the expectation, the risk neutral
minimum principle (see e.g. \cite[Section 3.2]{Bahlali2008}) is recovered
with the process $(y_{t}^{\prime\,\pi})_{t\in\TT}$ disappearing in
a natural way.
\begin{cor}
\label{cor:rn-limit}Suppose that the assumptions of Theorem~\ref{thm:ra-min-principle}
hold, and additionally, $p=1$ and $\rho$ is the expectation. Then
the statement of the theorem holds, with the Hamiltonian $H$ replaced
by 
\begin{align}
H_{0}(t,x,y,z,a) & \deq c(t,x,a)+yb(t,x,a)+\trace[z\sigma(t,x,a)]\nonumber \\
 & \qquad\forall(t,x,y,z,a)\in\TT\times\XX\times\YY\times\ZZ\times\AA,\label{eq:relaxed-hamiltonian-risk-neutral-1}
\end{align}
and where the process $(y_{t}^{\prime\,\pi})_{t\in\TT}$ is constant,
$y_{t}^{\prime\,\pi}=1$ for all $t\in\TT$.
\end{cor}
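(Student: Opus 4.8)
The plan is to obtain Corollary~\ref{cor:rn-limit} as a direct specialization of Theorem~\ref{thm:ra-min-principle} to the case $p=1$, $\rho=\EE$, so the only real work is to identify the $\Ll$-derivative of the expectation functional. First I would verify that $\EE:\Ll^{1}(\Omega;\RR)\to\RR$ satisfies Assumption~\ref{assu:baseline-rho}. Law invariance is immediate since $\EE[X]=\int_{\RR}x\,\lL(X)(\D x)$, so its $\Pp^{1}$-representation is $\mu\mapsto\int_{\RR}x\,\mu(\D x)$. Moreover $X\mapsto\EE[X]$ is a bounded linear functional on $\Ll^{1}(\Omega;\RR)$, hence Fr\'echet differentiable at every $X$ with remainder identically zero and with derivative equal to the constant random variable $1$, which belongs to $\Ll^{\infty}(\Omega;\RR)=(\Ll^{1}(\Omega;\RR))^{\ast}$ and is independent of the base point; the derivative map $X\mapsto\DF\psi(X)\equiv1$ is trivially continuous. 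Thus $\EE$ is continuously $\Ll$-differentiable on all of $\Pp^{1}(\RR)$ — one may take $\Pp^{\prime}=\Pp^{1}(\RR)$ in Assumption~\ref{assu:baseline-rho}(ii), and item~(iii) then holds automatically — and a representative $\Ll$-derivative at any $\mu\in\Pp^{1}(\RR)$ is $\DF\rho(\mu)(x)=1$; note that Proposition~\ref{prop:L-deriv} is not invoked, since the $\Ll$-derivative is exhibited explicitly rather than deduced.

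With this in hand the two modifications claimed in the corollary are read off directly. By the representation~\eqref{eq:y-prime}, the adjustment process is $y_{t}^{\prime\,\pi}=\EE[\DF\rho(C^{\pi})\mid\Ff_{t}]=\EE[1\mid\Ff_{t}]=1$ for all $t\in\TT$; this is an $\Ff$-adapted continuous process, and being bounded it lies in every $\Ss_{\Ff}^{q}(\Omega;\YY^{\prime})$, in particular in $\Ss_{\Ff}^{p/(p-1)}(\Omega;\YY^{\prime})=\Ss_{\Ff}^{\infty}(\Omega;\YY^{\prime})$ when $p=1$, which matches the integrability claim in the theorem. Substituting $y^{\prime}=1$ into the Hamiltonian~\eqref{eq:relaxed-hamiltonian-1} yields exactly $H_{0}$ of~\eqref{eq:relaxed-hamiltonian-risk-neutral-1}, and since $\nabla_{\XX}H(t,x,y,1,z,a)=\nabla_{\XX}H_{0}(t,x,y,z,a)$, the backward equation~\eqref{eq:y-bsde} and the Hamiltonian-minimality condition~\eqref{eq:H-inf-1} reduce to their stated risk-neutral forms with $y^{\prime\,\pi}\equiv1$. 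Part~(i) of the corollary therefore follows from Theorem~\ref{thm:ra-min-principle}(i). For part~(ii) it only remains to check that Assumption~\ref{assu:sufrel} specializes correctly: $\Ll$-convexity of $\EE$ holds with equality, since $\EE[U']-\EE[U]-\EE[\DF\rho(\lL(U))(U)(U'-U)]=\EE[U']-\EE[U]-\EE[U'-U]=0$, and the convexity hypothesis on $(x,\pi)\mapsto\int_{\AA}H(t,x,y,y^{\prime},z,a)\pi(\D a)$ evaluated at $y^{\prime}=1$ is precisely convexity of $(x,\pi)\mapsto\int_{\AA}H_{0}(t,x,y,z,a)\pi(\D a)$; hence Theorem~\ref{thm:ra-min-principle}(ii) applies verbatim.

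There is little genuinely delicate in this argument; the only point that warrants a moment's care is the endpoint $p=1$, where the relevant dual is $\Ll^{\infty}(\Omega;\RR)$ and the conjugate exponent $q=p/(p-1)$ appearing in Definition~\ref{def:diffability-wrt-measures} is $\infty$, so one should record explicitly that the constant $1$ lies in $\Ll^{\infty}(\Omega;\RR)$ and that every hypothesis phrased in terms of $\Ll$-differentiability (together with the continuity of the derivative map) is met in this limiting regime. I do not anticipate any other obstacle.
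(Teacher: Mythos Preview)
Your proposal is correct and follows essentially the same approach as the paper: identify that the $\Ll$-derivative of $\EE$ is identically $1$, deduce $y_{t}^{\prime\,\pi}\equiv 1$ from Eq.~(\ref{eq:y-prime}), and substitute into the Hamiltonian. The paper's proof is a two-sentence version of what you wrote; your extra care with Assumption~\ref{assu:baseline-rho}, the $p=1$ dual exponent, and the $\Ll$-convexity check for part~(ii) is welcome detail but not a different idea.
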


\begin{proof}
Follows from Theorem~\ref{thm:ra-min-principle} due to the fact
that if $\rho=\EE$, then the $\Ll$-derivative $\DF\rho(\cdot)(\cdot)$
is identically one, and by Eq.~(\ref{eq:y-prime}), we have $y_{t}^{\prime\,\pi}=1$
for all $t\in\TT$. Thus, we may also set $y^{\prime}=1$ in the definition
of the Hamiltonian $H$, Eq.~(\ref{eq:relaxed-hamiltonian-1}) to
recover the risk neutral minimum principle.
\end{proof}
\begin{rem}
\label{rem:y-prime-bsde}The process $(y_{t}^{\prime\,\pi})_{t\in\TT}$
in the statement of Theorem~\ref{thm:ra-min-principle} also satisfies
a backward stochastic differential equation that is obtained in an
intermediate step when proving the minimum principle. Specifically,
there is a unique $\Ff$-predictable process $z^{\prime}\in\Hh_{\Ff}^{p/(p-1)}(\Omega;\ZZ^{\prime})$,
$\ZZ^{\prime}\deq\RR^{d_{w}\times1}$, such that 
\begin{gather}
\D y_{t}^{\prime\,\pi}=z_{t}^{\prime\,\pi}\cdot\D w_{t},\label{eq:y-prime-bsde}\\
y_{T}^{\prime\,\pi}=\DF\rho(C^{\pi}).\nonumber 
\end{gather}
Therefore together, Eqs.~(\ref{eq:vague-sde}), (\ref{eq:y-bsde}),
and~(\ref{eq:y-prime-bsde}) form a forward-backward system of stochastic
differential equations with $d_{x}$ and $d_{x}+1$ state and adjoint
state variables, respectively.
\end{rem}

\begin{rem}
Returning to Example~\ref{exa:std-counter}, and setting $c=0$ and
$g(x)=x^{2}/2$ for all $x\in\XX$, and $\rho=\EE$, we can easily
see that Assumption~\ref{assu:sufrel} is satisfied. The Hamiltonian
becomes $H(z,a)=az$, and by Eq.~(\ref{eq:H-inf-1}), $\PP$-almost
surely for almost all $t\in\TT$, $\pi_{t}=\delta_{+1}$ if $z_{t}^{\pi}<0$
and $\pi_{t}=\delta_{-1}$ if $z_{t}>0$. However, the minimization
of the Hamiltonian does not determine $\pi_{t}$ when $z_{t}=0$.
At first glance this would seem to imply that the conditions given
in Theorem~\ref{thm:ra-min-principle} are not in fact sufficient
to fix the optimal control. But since $\D x_{t}^{\pi}=\int_{\AA}a\pi_{t}(\D a)\,\D w_{t}$,
and $\D y_{t}^{\pi}=z_{t}^{\pi}\D w_{t}$, $y_{T}=x_{T}$, by the
uniqueness of the solutions we must have that $y_{t}=x_{t}$ and $z_{t}=\int_{\AA}a\pi_{t}(\D a)\,\D w_{t}$
for all $t\in\TT$, which then implies $z_{t}=-z_{t}$ and in turn
that $z_{t}=0$ and $\pi_{t}=(\delta_{+1}+\delta_{-1})/2$ for all
$t\in\TT$. Therefore in order to find the optimal control it may
be insufficient to only minimize the Hamiltonian, and instead one
needs to determine the adjoint processes as well.
\end{rem}

\paragraph*{Proofs of the main results}

The rest of this section is dedicated to proving the risk aware minimum
principle, Theorem~\ref{thm:ra-min-principle}. We present our intermediate
steps in reaching the main result, but defer the details of their
proofs to Appendix~\ref{subsec:main-proofs}.

We adopt the following short-hand: For every Borel measurable function
$f:\AA\to\VV$ and every $\pi_{1},\pi_{2}\in\Pp(\AA)$ and $a_{1},a_{2}\in\RR$,
we denote
\begin{gather*}
f(a_{1}\pi_{1}+a_{2}\pi_{2})\deq a_{1}\int_{\AA}f(a)\pi_{1}(\D a)+a_{2}\int_{\AA}f(a)\pi_{2}(\D a).
\end{gather*}
In addition to the original stochastic differential equation, Eq.~(\ref{eq:vague-sde})
describing a controlled process $(x_{t}^{\pi})_{t\in\TT}$, we introduce
the additional, coupled differential equation for a $\RR$-valued
process, the running costs, $x^{\prime}=(x_{t}^{\prime\,\pi})_{t\in\TT}$
defined as
\begin{align}
x_{t}^{\prime\,\pi} & =\int_{0}^{t}\int_{\AA}c(s,x_{s}^{\pi},a)\pi_{s}(\D a)\,\D s.\label{eq:sde-aug}
\end{align}
We can then re-write the total cost as 
\begin{gather*}
C^{\pi}=x_{T}^{\prime\,\pi}+g(x_{T}^{\pi}).
\end{gather*}
We shall use $\XX^{\prime}=\RR$ to indicate the range of the process
$x^{\prime}$.

In order to establish optimality conditions for vague controlled solutions,
we need first and foremost some means of comparing pairs of solutions.
This poses a slight technical challenge, as the state and control
space processes for any given pair $\pi,\pi^{\prime}\in\VvV^{p}(b,\sigma,\nu)$
may be defined on different probability spaces. A natural way of comparing
weak solutions of stochastic differential equations would be to compare
the finite dimensional distributions of the state space processes
(and the distributions of the cost variables $C^{\pi}$). However,
since by Assumptions~\ref{assu:sde-baseline} and Proposition~\ref{prop:basic-existence-uniqueness},
strong solutions exists for given filtered probability spaces and
control process, we can do slightly better. Specifically, we can construct
an extended probability space simultaneously supporting both vague
controlled solutions, and on which we can compare the \emph{pathwise
laws} of the solutions.
\begin{lem}
\label{lem:join-spaces}Suppose Assumption~\ref{assu:sde-baseline}
holds, and that $(\Omega,\Sigma,\Ff,\PP,x,\pi)\in\VvV^{p}(b,\sigma,\nu)$
and $(\Omega^{\prime},\Sigma^{\prime},\Ff^{\prime},\PP^{\prime},x^{\prime},\pi^{\prime})\in\VvV^{p}(b,\sigma,\nu)$,
and let $w=(w_{t})_{t\in\TT}$ and $w^{\prime}=(w_{t}^{\prime})_{t\in\TT}$
be the corresponding Brownian motions. Then there exists a filtered
probability space $(\tilde{\Omega},\tilde{\Sigma},\tilde{\Ff},\tilde{\PP})$
supporting an $\tilde{\Ff}$-Brownian motion $\tilde{w}=(\tilde{w}_{t})_{t\in\TT}$,
and processes $(\tilde{x}_{t},\tilde{\pi}_{t})_{t\in\TT}$ and $(\tilde{x}_{t}^{\prime},\tilde{\pi}_{t}^{\prime})_{t\in\TT}$
satisfying
\begin{gather}
\begin{aligned}\tilde{x}_{t} & =x_{0}+\int_{0}^{t}b(s,\tilde{x}_{s},\tilde{\pi}_{s})\,\D s+\int_{0}^{t}\sigma(s,\tilde{x}_{s},\tilde{\pi}_{s})\,\D\tilde{w}_{s},\\
\tilde{x}_{t}^{\prime} & =x_{0}^{\prime}+\int_{0}^{t}b(s,\tilde{x}_{s}^{\prime},\tilde{\pi}_{s}^{\prime})\,\D s+\int_{0}^{t}\sigma(s,\tilde{x}_{s}^{\prime},\tilde{\pi}_{s}^{\prime})\,\D\tilde{w}_{s}
\end{aligned}
\label{eq:sde-joint}
\end{gather}
for all $t\in\TT$, and such that their laws equal those of $(x_{t},\pi_{t})_{t\in\TT}$
and $(x_{t}^{\prime},\pi_{t}^{\prime})_{t\in\TT}$.
\end{lem}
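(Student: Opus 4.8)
The plan is to glue the two weak solutions along a single copy of the driving Brownian motion, exploiting that each of $w$, $w'$ has Wiener-measure law, and then to transfer the two state equations by equality of laws. First I would realize the data of the first solution as a single random element $(x,\pi,w)$ valued in the Polish space $E\deq\Cc(\TT,\XX)\times\mathcal{Y}\times\Cc(\TT,\WW)$, where $\mathcal{Y}$ denotes the Polish space of (equivalence classes of) measurable maps $\TT\to\Pp(\AA)$ with the stable topology (after fixing a Polish topology on $\AA$ compatible with its Borel structure), and likewise write $(x',\pi',w')$ for the second solution. Let $\mu_{1},\mu_{2}\in\Pp(E)$ be the resulting laws; their $\Cc(\TT,\WW)$-marginals both equal Wiener measure $\gamma$. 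Since $E$ is Polish, each $\mu_{i}$ disintegrates over the Brownian coordinate as $\mu_{i}(\D x,\D\pi,\D w)=\gamma(\D w)\,\kappa_{i}(w,\D x,\D\pi)$, and I would then put
\begin{gather*}
\tilde\PP\deq\gamma(\D\tilde w)\,\kappa_{1}(\tilde w,\D\tilde x,\D\tilde\pi)\,\kappa_{2}(\tilde w,\D\tilde x^{\prime},\D\tilde\pi^{\prime})
\end{gather*}
on $\tilde\Omega\deq\Cc(\TT,\WW)\times[\Cc(\TT,\XX)\times\mathcal{Y}]^{2}$, with $\tilde\Sigma$ the $\tilde\PP$-completion of the Borel $\sigma$-algebra, $(\tilde w,\tilde x,\tilde\pi,\tilde x^{\prime},\tilde\pi^{\prime})$ the coordinate maps, and $\tilde\Ff$ the usual augmentation of the filtration they generate. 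By construction the law of $(\tilde x,\tilde\pi,\tilde w)$ is $\mu_{1}$ and that of $(\tilde x^{\prime},\tilde\pi^{\prime},\tilde w)$ is $\mu_{2}$; in particular $\tilde x_{0},\tilde x_{0}^{\prime}\sim\nu$ and all finite-dimensional distributions coincide with those of the original solutions.

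The crux, and the step I expect to be the main obstacle, is showing that $\tilde w$ is still an $\tilde\Ff$-Brownian motion after this enlargement. The input, applied symmetrically to $\kappa_{1}$ and $\kappa_{2}$, is that $w$ is an $\Ff$-Brownian motion in the first solution: for each $s\in\TT$ the triple $(x_{[0,s]},\pi_{[0,s]},w_{[0,s]})$ is $\Ff_{s}$-measurable while the post-$s$ increments $(w_{r}-w_{s})_{r\geq s}$ are independent of $\Ff_{s}$, so under $\mu_{1}$ the conditional law of $(x_{[0,s]},\pi_{[0,s]})$ given $w$ depends on $w$ only through $w_{[0,s]}$. Using this, for any bounded $\Phi$ of the coordinate paths on $[0,s]$ and any $\theta\in\RR$ I would integrate $\E^{\theta(\tilde w_{t}-\tilde w_{s})}\Phi$ first against $\kappa_{1}\otimes\kappa_{2}$ conditionally on the full path $\tilde w$ (which by the previous sentence yields $\E^{\theta(\tilde w_{t}-\tilde w_{s})}$ times a $\sigma(\tilde w_{[0,s]})$-measurable factor), then against $\gamma$, using that under $\gamma$ the increment $\tilde w_{t}-\tilde w_{s}$ is $\mathcal{N}(0,(t-s)I)$ and independent of $\sigma(\tilde w_{[0,s]})$; this yields $\EE_{\tilde\PP}[\E^{\theta(\tilde w_{t}-\tilde w_{s})}\mid\tilde\Ff_{s}]=\E^{\theta^{2}(t-s)/2}$, so $\tilde w_{t}-\tilde w_{s}$ is Gaussian and independent of $\tilde\Ff_{s}$ and $\tilde w$ is an $\tilde\Ff$-Brownian motion. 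This is precisely where gluing along the common noise has content, and it is what breaks for a naive product-space coupling, which would corrupt the joint law of control and driving noise. Since $\tilde\pi,\tilde\pi^{\prime}$ are $\tilde\Ff$-progressive and $\tilde x,\tilde x^{\prime}$ are $\tilde\Ff$-adapted with continuous paths by construction, items (i)--(iii) of Definition~\ref{def:vagconsol} hold for both candidate solutions.

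Finally I would transfer the state equations. Fix $t$; then $\int_{0}^{t}\sigma(s,x_{s},\pi_{s})\,\D w_{s}$ is the $\PP$-probability limit of a fixed sequence of predictable Riemann-type sums built from time-averages of $(x,\pi)$ over a refining sequence of partitions, each sum being a measurable functional of $(x,\pi,w)$ restricted to $[0,t]$; by the previous step the analogous functionals of $(\tilde x,\tilde\pi,\tilde w)$ are $\tilde\Ff$-adapted It\^o-integral approximations and hence converge in $\tilde\PP$-probability to $\int_{0}^{t}\sigma(s,\tilde x_{s},\tilde\pi_{s})\,\D\tilde w_{s}$. Writing $\Delta_{n}$ for the $[0,t]$-path functional giving the $n$-th approximation error in the forward equation, $\Delta_{n}(x,\pi,w)\to0$ in $\PP$-probability because the first solution satisfies Eq.~(\ref{eq:vague-sde}); since $(\tilde x,\tilde\pi,\tilde w)\overset{d}{=}(x,\pi,w)$, the sequence $\Delta_{n}(\tilde x,\tilde\pi,\tilde w)$ converges to $0$ in distribution, while it also converges in $\tilde\PP$-probability to $\tilde x_{t}-\tilde x_{0}-\int_{0}^{t}b(s,\tilde x_{s},\tilde\pi_{s})\,\D s-\int_{0}^{t}\sigma(s,\tilde x_{s},\tilde\pi_{s})\,\D\tilde w_{s}$, which is therefore $0$ $\tilde\PP$-a.s.; path-continuity upgrades this to all $t$ simultaneously, giving the first line of Eq.~(\ref{eq:sde-joint}), and the same argument for $(\tilde x^{\prime},\tilde\pi^{\prime},\tilde w)$ gives the second (with $x_{0},x_{0}^{\prime}$ read as $\tilde x_{0},\tilde x_{0}^{\prime}$). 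The Polish realization of the control process in the first paragraph and the stochastic-integral transfer just sketched are routine but should be spelled out in the full proof.
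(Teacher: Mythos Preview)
Your proposal is correct and shares the paper's core idea---couple the two solutions along a common copy of the driving noise---but the execution differs in two places worth noting. First, the paper phrases the gluing as taking the product space $\Omega\times\Omega'$ and ``conditioning on the event $\{w(\omega)=w'(\omega')\}$'', which is of course a null set under the product measure; your disintegration $\tilde\PP=\gamma(\D\tilde w)\,\kappa_{1}(\tilde w,\cdot)\,\kappa_{2}(\tilde w,\cdot)$ is exactly the rigorous content of that heuristic, and your argument that $\tilde w$ remains an $\tilde\Ff$-Brownian motion after enlarging by both control histories (using that each $\kappa_i$ restricted to $[0,s]$ depends only on $\tilde w_{[0,s]}$) fills a step the paper simply asserts. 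Second, and more substantively, once the common-noise space is in hand the paper does \emph{not} transfer the original state processes by equality of laws as you do; instead it invokes Proposition~\ref{prop:basic-existence-uniqueness} to \emph{solve} Eq.~(\ref{eq:sde-joint}) afresh on $(\tilde\Omega,\tilde\Sigma,\tilde\Ff,\tilde\PP)$ with inputs $(\tilde w,\tilde\pi)$ and $(\tilde w,\tilde\pi')$, and then appeals to pathwise uniqueness to conclude that the laws of $(\tilde x,\tilde\pi)$ and $(\tilde x',\tilde\pi')$ match those of the originals. This buys brevity---no need for your stochastic-integral transfer argument---at the cost of using the Lipschitz structure of Assumption~\ref{assu:sde-baseline}, which is already available. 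Your route is more self-contained and would survive without pathwise uniqueness; the paper's is shorter given the standing hypotheses.
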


For simplicity, we shall implicitly suppose that pairs of vague controlled
solutions are defined on the same probability space. In addition,
for every pair of vague controlled solutions $\pi,q\in\VvV^{p}(b,\sigma,\nu)$,
the convex combination of their control processes shall be denoted
by the short-hand $\pi(\alpha,q)$, that is, for all $\pi$, $q$
and $\alpha\in[0,1]$,
\begin{gather*}
\pi_{t}(\alpha,q)\deq\pi_{t}+\alpha(q_{t}-\pi_{t})\quad\forall t\in\TT.
\end{gather*}
We will the control $q$ as a perturbation of the original, reference
control $\pi$, and our goal is to derive optimality conditions from
variational equations representing the response of the solution to
$q$.

We begin with a few auxiliary results, variations of which have appeared
in the literature. The following lemma states that solutions corresponding
to perturbed controls are, uniformly in time, good approximations
of the unperturbed solutions.
\begin{lem}
\label{lem:var-vanish}For all $\pi,q\in\VvV^{p}(b,\sigma,\nu)$ and
$\alpha\in[0,1]$,
\begin{subequations}
\label{eq:sups}
\begin{align}
\Snorm{x^{\pi(\alpha,q)}-x^{\pi}}{\pbar} & \in\Oo(\alpha),\label{eq:sup-pbar}\\
\Snorm{x^{\prime\,\pi(\alpha,q)}-x^{\prime\,\pi}}p & \in\Oo(\alpha),\label{eq:sup-p}
\end{align}
\end{subequations}
where $\pbar$ is as in Assumption~\ref{assu:sde-baseline}. In addition,
we have for the terminal cost
\begin{align}
\Lnorm{g\bigl(x_{T}^{\pi(\alpha,q)}\bigr)-g\bigl(x_{T}^{\pi}\bigr)}p & \in\Oo(\alpha).\label{eq:g-sup-p}
\end{align}
\end{lem}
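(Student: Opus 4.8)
The plan is to treat the two state processes $x^{\pi(\alpha,q)}$ and $x^{\prime\,\pi(\alpha,q)}$ by a standard Gronwall-type argument applied to the difference $\delta^\alpha_t \deq x_t^{\pi(\alpha,q)} - x_t^{\pi}$, exploiting that the drift and diffusion coefficients are affine in the control measure (via the shorthand $f(a_1\pi_1+a_2\pi_2)$ introduced just before the statement) and Lipschitz in $x$ by Assumption~\ref{assu:sde-baseline}(iii). First I would write the SDE for $\delta^\alpha$: subtracting Eq.~(\ref{eq:vague-sde}) written for $\pi(\alpha,q)$ and for $\pi$, and using $\pi_t(\alpha,q) = \pi_t + \alpha(q_t-\pi_t)$, the difference of the drift terms splits into $[b(s,x_s^{\pi(\alpha,q)},\pi_s(\alpha,q)) - b(s,x_s^{\pi},\pi_s(\alpha,q))] + \alpha[b(s,x_s^{\pi},q_s) - b(s,x_s^{\pi},\pi_s)]$, and similarly for $\sigma$. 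The first bracket is bounded by $L|\delta^\alpha_s|$ by the Lipschitz-in-$x$ property (note the averaging over a probability measure does not spoil this, since $|\int b(s,x,a)\mu(\D a) - \int b(s,y,a)\mu(\D a)| \le L|x-y|$); the second bracket is the $\Oo(\alpha)$ forcing term, whose $\Ll^{\pbar}$-norm is controlled by the growth bound Eq.~(\ref{eq:baseline-growth-b}) together with the $\pbar_3$-admissibility, Eq.~(\ref{eq:admissible}), and the feasibility inequality $\pbar_2 \le \pbar_3/\pbar$ (which is exactly what makes $\int_{\AA}|a|^{\pbar_2\pbar}\pi_t(\D a)$ and the analogous $q$-integral finite, via Jensen/Hölder), plus $x^{\pi}\in\Ss_{\Ff}^{\pbar}(\Omega;\XX)$ from Proposition~\ref{prop:basic-existence-uniqueness}.

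Next I would estimate $\EE[\sup_{t\le T}|\delta^\alpha_t|^{\pbar}]$ using the Burkholder--Davis--Gundy inequality on the stochastic integral and Jensen/Hölder on the Lebesgue integral, obtaining an inequality of the form $\EE[\sup_{s\le t}|\delta^\alpha_s|^{\pbar}] \le C\alpha^{\pbar} + C\int_0^t \EE[\sup_{r\le s}|\delta^\alpha_r|^{\pbar}]\,\D s$ for a constant $C$ depending only on $L$, $T$, $\pbar$, and the (finite) $\pbar$-moments above. Gronwall's lemma then yields $\Snorm{x^{\pi(\alpha,q)}-x^{\pi}}{\pbar}^{\pbar} \le C'\alpha^{\pbar}$, i.e.\ Eq.~(\ref{eq:sup-pbar}). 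For Eq.~(\ref{eq:sup-p}), since $x^{\prime}$ has no diffusion term and its ``drift'' is the cost rate $c$, the difference $x_t^{\prime\,\pi(\alpha,q)} - x_t^{\prime\,\pi} = \int_0^t \int_{\AA}[c(s,x_s^{\pi(\alpha,q)},a)\pi_s(\alpha,q)(\D a) - c(s,x_s^{\pi},a)\pi_s(\D a)]\,\D s$ is handled the same way but is in fact simpler: split as before, bound the $x$-dependence by the mean value theorem and the gradient bound Eq.~(\ref{eq:baseline-growth-nabla-c}) (using $p_1',p_2' \le p_1,p_2$ and $p_1,p_2 < \pbar/p - 1$ to get the relevant $\Ll^p$ integrability of $\nabla_{\XX}c$ evaluated along the trajectory, then Hölder against $\Snorm{\delta^\alpha}{\pbar}\in\Oo(\alpha)$), and bound the $\alpha$-term directly via Eq.~(\ref{eq:baseline-growth-c}). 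No Gronwall is needed here since $x^{\prime}$ does not feed back into itself. Finally, Eq.~(\ref{eq:g-sup-p}) follows by writing $g(x_T^{\pi(\alpha,q)}) - g(x_T^{\pi}) = \int_0^1 \nabla_{\XX}g(x_T^{\pi} + \theta(x_T^{\pi(\alpha,q)}-x_T^{\pi}))\,\D\theta \cdot (x_T^{\pi(\alpha,q)}-x_T^{\pi})$, applying Hölder with exponents $\pbar/(p(1+p_1'))$-type splitting: the gradient factor is in $\Ll^{r}$ for suitable $r$ by Eq.~(\ref{eq:baseline-growth-nabla-g}) and the moment bounds $x_T^{\pi}, x_T^{\pi(\alpha,q)}\in\Ll^{\pbar}$ (the latter uniformly in $\alpha\in[0,1]$, which also follows from Eq.~(\ref{eq:sup-pbar})), while the difference factor is $\Oo(\alpha)$ in $\Ll^{\pbar}$, and $p_1' < \pbar/p - 1$ guarantees the Hölder conjugates work out so that the product is $\Oo(\alpha)$ in $\Ll^p$.

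The main obstacle I anticipate is bookkeeping the moment exponents: one must verify carefully that each application of Hölder's inequality closes, i.e.\ that the growth exponents $p_1, p_2, p_1', p_2', \pbar_1, \pbar_2$ combined with the available integrability $\pbar$ (of the state), $\pbar_3$ (of the control), and target $p$ (of the cost) satisfy the needed conjugate-exponent relations — this is precisely the role of the feasibility inequalities Eq.~(\ref{eq:feasible-p}). In particular, the estimate Eq.~(\ref{eq:sup-p}) and Eq.~(\ref{eq:g-sup-p}) require trading a factor of $\Snorm{\delta^\alpha}{\pbar}$ against a power of the state with growth exponent up to $\max(p_1',p_2')$, and $p_1,p_2 < \pbar/p - 1$ is exactly the slack that makes the resulting exponent on the state no larger than $\pbar$ while leaving an $\Ll^p$-integrable remainder. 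A secondary, more routine point is justifying the finiteness of $\sup_{t}\int_{\AA}|a|^{\pbar_2\pbar}\pi_t(\D a)$ and the $q$-analogue from $\pbar_3$-admissibility and $\pbar_2\pbar \le \pbar_3$; this is where Eq.~(\ref{eq:pbar-bounds}) is used. Everything else is a textbook Gronwall/BDG argument and I would relegate the detailed constants to Appendix~\ref{sec:proofs}.
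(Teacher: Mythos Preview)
Your proposal is correct and follows essentially the same approach as the paper's proof: the same add-and-subtract splitting into a Lipschitz-in-$x$ part and an $\alpha$-forcing part, BDG plus Gronwall for Eq.~(\ref{eq:sup-pbar}), and the mean-value-theorem plus H\"older arguments for Eqs.~(\ref{eq:sup-p}) and~(\ref{eq:g-sup-p}), with the feasibility inequalities of Eq.~(\ref{eq:feasible-p}) used exactly as you describe to close the H\"older conjugates. The paper's proof differs only in making the exponent bookkeeping fully explicit (for instance, the H\"older exponent for the gradient factor in Eq.~(\ref{eq:g-sup-p}) is $\pbar/(\pbar-p)$, and one checks $pp_1'\cdot\pbar/(\pbar-p)<\pbar$), which is precisely the ``main obstacle'' you anticipate.
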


The following lemma provides the means for computing the first-order
response of solutions to perturbations of the control process.
\begin{lem}
\label{lem:epsilon}Let $\pi,q\in\VvV^{p}(b,\sigma,\nu)$ be arbitrary.
Then there exists an $\XX$-valued process $\delta^{\pi,q}=(\delta_{t}^{\pi,q})_{t\in\TT}$
that is the unique strong solution of
\begin{align}
\delta_{t}^{\pi,q} & =\int_{0}^{t}\left[\nabla_{\XX}b(s,x_{s}^{\pi},\pi_{s})\delta_{s}^{\pi,q}+b(s,x_{s}^{\pi},q_{s}-\pi_{s})\right]\,\D s\nonumber \\
 & \qquad+\int_{0}^{t}\left[\nabla_{\XX}\sigma(s,x_{s}^{\pi},\pi_{s})\delta_{s}^{\pi,q}+\sigma(s,x_{s}^{\pi},q_{s}-\pi_{s})\right]\,\D w_{s}.\label{eq:delta-solo}
\end{align}
Moreover, defining $\delta^{\prime\,\pi,q}=(\delta_{t}^{\prime\,\pi,q})_{t\in\TT}$
as
\begin{gather}
\delta_{t}^{\prime\,\pi,q}\deq\int_{0}^{t}\left[\nabla_{\XX}c(s,x_{s}^{\pi},\pi_{s})\delta_{s}^{\pi,q}+c(s,x_{s}^{\pi},q_{s}-\pi_{s})\right]\,\D s,\label{eq:delta-prime}
\end{gather}
we have
\begin{subequations}
\label{eq:delta-finite}
\begin{gather}
\EE\biggl[\sup_{t\in\TT}\bigl|\delta_{t}^{\pi,q}\bigr|^{\pbar}\biggr]<\infty,\label{eq:delta-pbar-finite}\\
\exists r>p:\quad\EE\biggl[\sup_{t\in\TT}\bigl|\delta_{t}^{\prime\,\pi,q}\bigr|^{r}\biggr]<\infty,\label{eq:delta-prime-p-finite}
\end{gather}
\end{subequations}
and, for all $\alpha\in[0,1]$,
\begin{subequations}
\label{eq:sup-eps}
\begin{gather}
\Snorm{x^{\pi(\alpha,q)}-x^{\pi}-\alpha\delta^{\pi,q}}{\pbar}\in o(\alpha),\label{eq:sup-eps-o-pbar}\\
\Snorm{x^{\prime\,\pi(\alpha,q)}-x^{\prime\,\pi}-\alpha\delta^{\prime\,\pi,q}}{\pbar}\in o(\alpha).\label{eq:sup-eps-prime-o-p}
\end{gather}
\end{subequations}
\end{lem}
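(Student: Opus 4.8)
The plan is to handle the two parts of the lemma separately --- well-posedness and moment bounds for the variational processes $\delta^{\pi,q}$ and $\delta^{\prime\,\pi,q}$, and then the first-order expansions~(\ref{eq:sup-eps}) --- in each case reducing to standard estimates for linear stochastic differential equations together with a dominated-convergence argument.

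First I would observe that~(\ref{eq:delta-solo}) is a \emph{linear} SDE in $\delta^{\pi,q}$: its homogeneous coefficients $\nabla_{\XX}b(s,x_s^{\pi},\pi_s)$ and $\nabla_{\XX}\sigma(s,x_s^{\pi},\pi_s)$ are bounded by $L$ (Assumption~\ref{assu:sde-baseline}(iii); integration against a probability measure only averages them), and the inhomogeneous forcing terms $b(s,x_s^{\pi},q_s-\pi_s)$, $\sigma(s,x_s^{\pi},q_s-\pi_s)$ lie in $\Hh_{\Ff}^{\pbar}$. The latter uses the growth bound~(\ref{eq:baseline-growth-b-sigma}), the fact that $\pbar_1\le1$ so that $|x_s^{\pi}|^{\pbar_1}\le1+|x_s^{\pi}|$ with $x^{\pi}\in\Ss_{\Ff}^{\pbar}(\Omega;\XX)$ by Proposition~\ref{prop:basic-existence-uniqueness}, and Jensen's inequality together with $\pbar_2\le\pbar_3/\pbar$ to bound $(\int_{\AA}|a|^{\pbar_2}\pi_s(\D a))^{\pbar}\le1+\int_{\AA}|a|^{\pbar_3}\pi_s(\D a)$ against the $\pbar_3$-admissibility bound~(\ref{eq:admissible}) (the case $\pbar_3=\infty$, with $\AA$ compact, being immediate). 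The usual contraction and Gronwall/Burkholder--Davis--Gundy estimates for linear SDEs then yield the unique strong solution and the bound~(\ref{eq:delta-pbar-finite}). For $\delta^{\prime\,\pi,q}$, which is a pathwise Lebesgue integral, I would bound $\sup_{t\in\TT}|\delta_t^{\prime\,\pi,q}|$ using Minkowski's integral inequality and H\"older's inequality: the factor $\nabla_{\XX}c(s,x_s^{\pi},\pi_s)$ grows like $|x_s^{\pi}|^{p_1'}+\int_{\AA}|a|^{p_2'}\pi_s(\D a)$ and, since $p_1'\le p_1<\pbar/p-1$ and $p_2'\le p_2<\pbar/p-1\le\pbar_3/p$, lies in $\Ss_{\Ff}^{a}$ for $a=\min(\pbar/p_1',\pbar_3/p_2')$ with $1/a+1/\pbar<1/p$; pairing it with $\delta^{\pi,q}\in\Ss_{\Ff}^{\pbar}$, and treating $c(s,x_s^{\pi},q_s-\pi_s)$ similarly, gives $\Ss_{\Ff}^{r}$-integrability for some $r>p$, which is~(\ref{eq:delta-prime-p-finite}).

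For the expansions, set $\eta^{\alpha}\deq x^{\pi(\alpha,q)}-x^{\pi}-\alpha\delta^{\pi,q}$. The structural point is that $b(s,x,\cdot)$ and $\sigma(s,x,\cdot)$ are linear in the control measure, so $b(s,x,\pi_s(\alpha,q))=b(s,x,\pi_s)+\alpha\,b(s,x,q_s-\pi_s)$ and likewise for $\sigma$. Subtracting the SDEs for $x^{\pi(\alpha,q)}$, $x^{\pi}$ and $\delta^{\pi,q}$ and writing, by the mean-value form of Taylor's theorem, $b(s,x_s^{\pi(\alpha,q)},\pi_s)-b(s,x_s^{\pi},\pi_s)=\bar b_s^{\alpha}(x_s^{\pi(\alpha,q)}-x_s^{\pi})$ with $\bar b_s^{\alpha}\deq\int_0^1\nabla_{\XX}b(s,x_s^{\pi}+\theta(x_s^{\pi(\alpha,q)}-x_s^{\pi}),\pi_s)\,\D\theta$ (and $\bar\sigma_s^{\alpha}$ analogously), one finds that $\eta^{\alpha}$ solves a linear SDE with coefficients $\bar b^{\alpha},\bar\sigma^{\alpha}$ bounded by $L$ and with forcing $\alpha(\bar b_s^{\alpha}-\nabla_{\XX}b(s,x_s^{\pi},\pi_s))\delta_s^{\pi,q}+\alpha(b(s,x_s^{\pi(\alpha,q)},q_s-\pi_s)-b(s,x_s^{\pi},q_s-\pi_s))$ plus the $\sigma$-analogue. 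The second bracket is $\Oo(|x_s^{\pi(\alpha,q)}-x_s^{\pi}|)$ pointwise, hence $\Oo(\alpha)$ in $\Ss_{\Ff}^{\pbar}$ by Lemma~\ref{lem:var-vanish}, so with the prefactor it is $\Oo(\alpha^2)$; in the first term $|\bar b_s^{\alpha}-\nabla_{\XX}b(s,x_s^{\pi},\pi_s)|\le2L$ and converges to $0$ as $\alpha\downarrow0$ (along subsequences, the $\Ss_{\Ff}^{\pbar}$-convergence $x^{\pi(\alpha,q)}\to x^{\pi}$ of Lemma~\ref{lem:var-vanish} gives a.s.\ uniform convergence, then continuity of $\nabla_{\XX}b$ and dominated convergence in $\theta$), so by dominated convergence (dominated by $2L|\delta^{\pi,q}|^{\pbar}\in\Ll^1$) it is $o(1)$ in $\Ss_{\Ff}^{\pbar}$, i.e.\ $o(\alpha)$ with the prefactor. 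The linear-SDE estimate --- with the $\Oo(\alpha)$ bound on $\Snorm{x^{\pi(\alpha,q)}-x^{\pi}}{\pbar}$ keeping the constants uniform in $\alpha$ --- then gives $\Snorm{\eta^{\alpha}}{\pbar}\in o(\alpha)$, which is~(\ref{eq:sup-eps-o-pbar}). For~(\ref{eq:sup-eps-prime-o-p}), expanding $\eta^{\prime\,\alpha}\deq x^{\prime\,\pi(\alpha,q)}-x^{\prime\,\pi}-\alpha\delta^{\prime\,\pi,q}$ the same way yields the integrand $\bar c_s^{\alpha}\eta_s^{\alpha}+\alpha(\bar c_s^{\alpha}-\nabla_{\XX}c(s,x_s^{\pi},\pi_s))\delta_s^{\pi,q}+\alpha(c(s,x_s^{\pi(\alpha,q)},q_s-\pi_s)-c(s,x_s^{\pi},q_s-\pi_s))$, where $\bar c^{\alpha}$ grows like $|x_s^{\pi}|^{p_1'}+|x_s^{\pi(\alpha,q)}|^{p_1'}+\int_{\AA}|a|^{p_2'}\pi_s(\D a)$; H\"older's inequality pairing this with $\eta^{\alpha}\in\Ss_{\Ff}^{\pbar}$ (norm $o(\alpha)$) and with $\delta^{\pi,q}\in\Ss_{\Ff}^{\pbar}$, Minkowski's integral inequality, and the same dominated-convergence argument give the stated $o(\alpha)$ bound in the relevant $\Ss_{\Ff}$-norm, again of some order $r>p$ by the feasibility inequalities~(\ref{eq:feasible-p}).

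The main obstacle I anticipate is the bookkeeping of integrability exponents: every product appearing in the forcing terms and in the expansions must be shown to lie in an $\Ss_{\Ff}$- or $\Hh_{\Ff}$-space of the right order --- $\pbar$ for the unprimed quantities, strictly above $p$ for the primed ones --- which is precisely where the feasibility relations $p<\pbar\le\pbar_3$, $\pbar_2\le\pbar_3/\pbar$ and $p_1,p_2<\pbar/p-1$ of~(\ref{eq:feasible-p}) enter, and the constants in the linear-SDE estimates must be checked to remain bounded as $\alpha\downarrow0$. A subordinate delicate point is upgrading the $\Ss_{\Ff}^{\pbar}$-convergence $x^{\pi(\alpha,q)}\to x^{\pi}$ to the a.s.\ uniform convergence needed for the dominated-convergence step, handled by the standard subsequence argument.
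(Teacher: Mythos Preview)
Your proposal is correct and follows essentially the same route as the paper: existence of $\delta^{\pi,q}$ via the standard linear-SDE theorem with bounded homogeneous coefficients and $\Hh_{\Ff}^{\pbar}$-forcing, the $r>p$ bound for $\delta^{\prime\,\pi,q}$ via H\"older with the feasibility exponents, and for~(\ref{eq:sup-eps}) the mean-value/Taylor decomposition of the increment, a Gronwall/BDG estimate for the resulting linear SDE in $\eta^{\alpha}$, and continuity of $\nabla_{\XX}b,\nabla_{\XX}\sigma,\nabla_{\XX}c$ plus dominated convergence to kill the remainder. The paper organizes the remainder as $b_{t}^{\pi,\pi(\alpha,q),1}+\alpha b_{t}^{\pi,\pi(\alpha,q),2}$ rather than writing $\eta^{\alpha}$ as the solution of a linear SDE with $\alpha$-dependent coefficients, but this is only a bookkeeping difference---the two forcing terms you isolate are exactly the two summands of the paper's $b^{\pi,\pi(\alpha,q),2}$, and the Gronwall step is identical.
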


The next results connect the response of the dynamics to the perturbation,
described by the process $(\delta^{\pi,q},\delta^{\prime\,\pi,q})$
and characterized by the above lemmas, to the risk aware objectives.
Let us for brevity denote for all $\pi\in\VvV^{p}(b,\sigma,\nu)$
and for any law invariant risk function $\rho:\Pp^{p}(\RR)\to\RR$,
with an $\Ll$-derivative $\DF\rho(\cdot)(\cdot):\Pp^{p}(\RR)\times\RR\to\RR$
\begin{gather}
\begin{gathered}\Theta^{\pi}\deq\lL(C^{\pi})=\lL(\theta(x_{T}^{\pi},x_{T}^{\prime\,\pi})),\\
D^{\pi}\deq\DF\rho(\Theta^{\pi})(\theta(x_{T}^{\pi},x_{T}^{\prime\,\pi})),
\end{gathered}
\label{eq:ThetaD}
\end{gather}
where
\begin{gather*}
\theta(x,x^{\prime})\deq g(x)+x^{\prime}\qquad\forall(x,x^{\prime})\in\XX\times\XX^{\prime}.
\end{gather*}

If $\pi\in\VvV^{p}(b,\sigma,\nu)$ is $\pP_{1}$-optimal, then by
definition for any $q\in\VvV^{p}(b,\sigma,\nu)$ and $\alpha\in[0,1]$
we have that
\begin{gather}
0\leq\rho(\Theta^{\pi(\alpha,q)})-\rho(\Theta^{\pi}).\label{eq:var-optimality}
\end{gather}
We will use Eq.~(\ref{eq:var-optimality}) as a starting point for
deriving our optimality conditions.
\begin{lem}
\label{lem:delta-term}Suppose Assumption~\ref{assu:baseline-rho}
holds. Let $\pi\in\VvV^{p}(b,\sigma,\nu)$ be $\pP_{1}$-optimal and
$q\in\VvV^{p}(b,\sigma,\nu)$ arbitrary, and let the process $(\delta_{t}^{\pi,q},\delta_{t}^{\prime\,\pi,q})_{t\in\TT}$
be as in the statement of Lemma~\ref{lem:epsilon}. Then
\begin{gather}
0\leq\EE\biggl[D^{\pi}\Bigl(\nabla_{\XX}g(x_{T}^{\pi})\delta_{T}^{\pi,q}+\delta_{T}^{\prime\,\pi,q}\Bigr)\biggr],\label{eq:var-deltaform}
\end{gather}
where $D^{\pi}$ is as defined in Eq.~(\ref{eq:ThetaD}).
\end{lem}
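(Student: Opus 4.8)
The plan is to start from the optimality inequality~(\ref{eq:var-optimality}), linearise its right-hand side in $\alpha$, and then let $\alpha\downarrow0$. First I would apply the $\Ll$-derivative expansion~(\ref{eq:phi-expansion}) with $\phi=\rho$, $\mu_{0}=\Theta^{\pi}$, $\mu=\Theta^{\pi(\alpha,q)}$, $U_{0}=C^{\pi}=\theta(x_{T}^{\pi},x_{T}^{\prime\,\pi})$ and $U=C^{\pi(\alpha,q)}=\theta(x_{T}^{\pi(\alpha,q)},x_{T}^{\prime\,\pi(\alpha,q)})$, which with the notation of~(\ref{eq:ThetaD}) gives
\begin{gather*}
\rho(\Theta^{\pi(\alpha,q)})-\rho(\Theta^{\pi})=\EE\bigl[D^{\pi}\bigl(C^{\pi(\alpha,q)}-C^{\pi}\bigr)\bigr]+o\bigl(\Lnorm{C^{\pi(\alpha,q)}-C^{\pi}}p\bigr).
\end{gather*}
Since $\Lnorm{C^{\pi(\alpha,q)}-C^{\pi}}p\in\Oo(\alpha)$ by Lemma~\ref{lem:var-vanish} (Eqs.~(\ref{eq:sup-p}) and~(\ref{eq:g-sup-p})), the remainder is $o(\alpha)$, so it remains to evaluate $\lim_{\alpha\downarrow0}\alpha^{-1}\EE[D^{\pi}(C^{\pi(\alpha,q)}-C^{\pi})]$.

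I would then decompose $C^{\pi(\alpha,q)}-C^{\pi}=\bigl(g(x_{T}^{\pi(\alpha,q)})-g(x_{T}^{\pi})\bigr)+\bigl(x_{T}^{\prime\,\pi(\alpha,q)}-x_{T}^{\prime\,\pi}\bigr)$ and treat the two parts separately. The running-cost part is immediate from Lemma~\ref{lem:epsilon} (Eq.~(\ref{eq:sup-eps-prime-o-p})): $\alpha^{-1}(x_{T}^{\prime\,\pi(\alpha,q)}-x_{T}^{\prime\,\pi})\to\delta_{T}^{\prime\,\pi,q}$ in $\Ll^{\pbar}(\Omega;\RR)$, hence in $\Ll^{p}(\Omega;\RR)$. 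For the terminal cost I would write, via the fundamental theorem of calculus,
\begin{gather*}
g(x_{T}^{\pi(\alpha,q)})-g(x_{T}^{\pi})=\Bigl[\int_{0}^{1}\nabla_{\XX}g\bigl(x_{T}^{\pi}+s(x_{T}^{\pi(\alpha,q)}-x_{T}^{\pi})\bigr)\,\D s\Bigr]\bigl(x_{T}^{\pi(\alpha,q)}-x_{T}^{\pi}\bigr),
\end{gather*}
so that $\alpha^{-1}(g(x_{T}^{\pi(\alpha,q)})-g(x_{T}^{\pi}))$ is the product of two factors: the bracketed average, which converges to $\nabla_{\XX}g(x_{T}^{\pi})$ almost surely along a subsequence (by continuity of $\nabla_{\XX}g$ and $x_{T}^{\pi(\alpha,q)}\to x_{T}^{\pi}$ from Lemma~\ref{lem:var-vanish}, Eq.~(\ref{eq:sup-pbar})), and $\alpha^{-1}(x_{T}^{\pi(\alpha,q)}-x_{T}^{\pi})$, which converges to $\delta_{T}^{\pi,q}$ in $\Ll^{\pbar}$ by Lemma~\ref{lem:epsilon}, Eq.~(\ref{eq:sup-eps-o-pbar}).

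To upgrade the almost-sure convergence of this product to $\Ll^{p}(\Omega;\RR)$-convergence, I would use the growth bound~(\ref{eq:baseline-growth-nabla-g}) on $\nabla_{\XX}g$ together with $x^{\pi(\alpha,q)},x^{\pi}\in\Ss_{\Ff}^{\pbar}(\Omega;\XX)$ and Eq.~(\ref{eq:delta-pbar-finite}): the feasibility inequalities~(\ref{eq:feasible-p}), and in particular the strict bound $p_{1}^{\prime}\le p_{1}<\pbar/p-1$, give by H\"older a uniform $\Ll^{p+\varepsilon}(\Omega;\RR)$ bound on the difference quotients for some $\varepsilon>0$, so the Vitali convergence theorem yields $\alpha^{-1}(g(x_{T}^{\pi(\alpha,q)})-g(x_{T}^{\pi}))\to\nabla_{\XX}g(x_{T}^{\pi})\delta_{T}^{\pi,q}$ in $\Ll^{p}(\Omega;\RR)$. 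Adding the two parts, $\alpha^{-1}(C^{\pi(\alpha,q)}-C^{\pi})\to\nabla_{\XX}g(x_{T}^{\pi})\delta_{T}^{\pi,q}+\delta_{T}^{\prime\,\pi,q}$ in $\Ll^{p}(\Omega;\RR)$.

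Finally, by Definition~\ref{def:diffability-wrt-measures} the $\Ll$-derivative satisfies $D^{\pi}\in\Ll^{p/(p-1)}(\Omega;\RR)$, so the pairing $V\mapsto\EE[D^{\pi}V]$ is continuous on $\Ll^{p}(\Omega;\RR)$; hence $\alpha^{-1}\EE[D^{\pi}(C^{\pi(\alpha,q)}-C^{\pi})]\to\EE[D^{\pi}(\nabla_{\XX}g(x_{T}^{\pi})\delta_{T}^{\pi,q}+\delta_{T}^{\prime\,\pi,q})]$. Dividing~(\ref{eq:var-optimality}) by $\alpha>0$, inserting the expansion and the $o(\alpha)$ bound on the remainder, and letting $\alpha\downarrow0$ gives~(\ref{eq:var-deltaform}). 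I expect the main obstacle to be precisely the $\Ll^{p}$-convergence of the terminal-cost difference quotient: since $g$ is only once continuously differentiable with a possibly superlinearly growing gradient, a naive dominated-convergence argument fails, and one has to carry out the uniform-integrability estimate carefully, tracking the exponents of Assumption~\ref{assu:sde-baseline} and~(\ref{eq:feasible-p}); the remaining steps are a routine assembly of Lemmas~\ref{lem:var-vanish} and~\ref{lem:epsilon} with the definition of the $\Ll$-derivative.
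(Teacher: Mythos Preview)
Your proposal is correct and follows essentially the same route as the paper: expand $\rho(\Theta^{\pi(\alpha,q)})-\rho(\Theta^{\pi})$ via the $\Ll$-derivative, bound the remainder by $o(\alpha)$ using Lemma~\ref{lem:var-vanish}, represent $g(x_{T}^{\pi(\alpha,q)})-g(x_{T}^{\pi})$ by the fundamental theorem of calculus, and pass to the limit using Lemma~\ref{lem:epsilon} together with the growth bound on $\nabla_{\XX}g$ and the exponent relations in~(\ref{eq:feasible-p}). The only cosmetic difference is that the paper handles the terminal-cost term by an explicit H\"older splitting (in the spirit of Eq.~(\ref{eq:holder-trick-x})) rather than invoking Vitali, but the underlying uniform $\Ll^{p+\varepsilon}$ bound is the same.
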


We can now construct the adjoint processes $(y_{t}^{\pi},y_{t}^{\prime\,\pi},z_{t}^{\pi},z_{t}^{\prime\,\pi})_{t\in\TT}$
appearing in Eqs.~(\ref{eq:y-bsde}, \ref{eq:y-prime-bsde}), and
use them to restate the optimality condition of Eq.~(\ref{eq:var-deltaform}).
The proof of the lemma follows roughly the same ideas as used in the
risk neutral case, see e.g. \cite{Bismut1973,Bismut1978,Bensoussan1982,Peng1990},
and relies primarily on the martingale representation theorem. In
the risk aware case, we need to additionally handle the nonlinearity
of the risk aware objective, which gives rise to the risk adjustment
process.
\begin{lem}
\label{lem:var-hamil}Suppose Assumptions~~\ref{assu:baseline-rho}
hold, and that 
\[
(\Omega,\Sigma,\Ff=(\Ff_{t})_{t\in\TT},\PP,x^{\pi}=(x_{t}^{\pi})_{t\in\TT},w=(w_{t})_{t\in\TT},\pi=(\pi_{t})_{t\in\TT})\in\VvV^{p}(b,\sigma,\nu)
\]
is $\pP_{1}$-optimal. Then there are unique $\Ff$-adapted continuous
processes $y^{\pi}\in\Ss_{\Ff}^{\pbar/(\pbar-1)}(\Omega;\YY)$ and
$y^{\prime\,\pi}\in\Ss_{\Ff}^{p/(p-1)}(\Omega;\YY^{\prime})$, and
unique $\Ff$-predictable processes $z^{\pi}\in\Hh_{\Ff}^{\pbar/(\pbar-1)}(\Omega;\ZZ)$
and $z^{\prime\,\pi}\in\Hh_{\Ff}^{p/(p-1)}(\Omega;\ZZ^{\prime})$
satisfying the backward stochastic differential equations
\begin{subequations}
\label{eq:relaxed-bsde}
\begin{gather}
\D y_{t}^{\pi}=-\nabla_{\XX}H(t,x_{t}^{\pi},y_{t}^{\pi},y_{t}^{\prime\,\pi},z_{t}^{\pi},\pi_{t})\,\D t+z_{t}^{\pi}\cdot\D w_{t},\label{eq:proof-y-bsde}\\
\D y_{t}^{\prime\,\pi}=z_{t}^{\prime\,\pi}\cdot\D w_{t},\label{eq:proof-y-prime-bsde}\\
y_{T}^{\pi}=D^{\pi}\nabla_{\XX}g(x_{T}^{\pi}),\quad y_{T}^{\prime\,\pi}=D^{\pi},\label{eq:proof-bsde-tc}
\end{gather}
\end{subequations}
where $H$ is as defined in Eq.~(\ref{eq:relaxed-hamiltonian-1}),
and Eq.~(\ref{eq:var-deltaform}) implies that for all $q\in\VvV^{p}(b,\sigma,\nu)$,
\begin{gather}
0\leq\EE\left[\int_{0}^{T}H(t,x_{t}^{\pi},y_{t}^{\pi},y_{t}^{\prime\,\pi},z_{t}^{\pi},q_{t})\,\D t-\int_{0}^{T}H(t,x_{t}^{\pi},y_{t}^{\pi},y_{t}^{\prime\,\pi},z_{t}^{\pi},\pi_{t})\,\D t\right].\label{eq:var-hamil}
\end{gather}
\end{lem}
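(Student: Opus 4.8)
The plan is to construct the four adjoint processes explicitly and then obtain Eq.~(\ref{eq:var-hamil}) from Eq.~(\ref{eq:var-deltaform}) by an It\^o duality computation. I would first treat the pair $(y^{\prime\,\pi},z^{\prime\,\pi})$. Since by Assumption~\ref{assu:baseline-rho} the risk function $\rho:\Ll^p(\Omega;\RR)\to\RR$ is Fr\'echet differentiable at $C^\pi$ with $p\in[1,\infty)$, its derivative $D^\pi=\DF\rho(\Theta^\pi)(\theta(x_T^\pi,x_T^{\prime\,\pi}))$ (see Eq.~(\ref{eq:ThetaD})) belongs to the dual $(\Ll^p)^\ast=\Ll^{p/(p-1)}(\Omega;\RR)$. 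I then set $y_t^{\prime\,\pi}\deq\EE[D^\pi\mid\Ff_t]$, a continuous $\Ff$-martingale in $\Ss_\Ff^{p/(p-1)}(\Omega;\YY^\prime)$, and apply the martingale representation theorem to obtain a unique $z^{\prime\,\pi}\in\Hh_\Ff^{p/(p-1)}(\Omega;\ZZ^\prime)$ with $\D y_t^{\prime\,\pi}=z_t^{\prime\,\pi}\cdot\D w_t$ and $y_T^{\prime\,\pi}=D^\pi$; the borderline case $p=1$ is handled separately using $(\Ll^1)^\ast=\Ll^\infty$ and the intersection spaces $\Ss_\Ff^\infty$, $\Hh_\Ff^\infty$.

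Next I would solve the linear backward SDE~(\ref{eq:proof-y-bsde}) for $(y^\pi,z^\pi)$ with terminal value $y_T^\pi=D^\pi\nabla_\XX g(x_T^\pi)$. Expanding $\nabla_\XX H$ from Eq.~(\ref{eq:relaxed-hamiltonian-1}) exhibits this as a linear BSDE whose generator is affine in $(y,z)$, with coefficients $\nabla_\XX b(t,x_t^\pi,\pi_t)$ and $\nabla_\XX\sigma(t,x_t^\pi,\pi_t)$ bounded by $L$ (Assumption~\ref{assu:sde-baseline}) and inhomogeneous term $y_t^{\prime\,\pi}\nabla_\XX c(t,x_t^\pi,\pi_t)$. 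Existence, uniqueness, and the claimed regularity $y^\pi\in\Ss_\Ff^{\pbar/(\pbar-1)}(\Omega;\YY)$, $z^\pi\in\Hh_\Ff^{\pbar/(\pbar-1)}(\Omega;\ZZ)$ then follow from standard linear BSDE theory, provided the terminal condition and the inhomogeneous term lie in the right $\Ll$-spaces; this is exactly what the polynomial growth bounds~(\ref{eq:baseline-growth-nabla-costs}), the estimate $x^\pi\in\Ss_\Ff^{\pbar}$ from Proposition~\ref{prop:basic-existence-uniqueness}, H\"older's inequality, and the feasibility inequalities~(\ref{eq:feasible-p}) --- in particular $p_1^\prime\le p_1<\pbar/p-1$ --- are there to guarantee.

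The core of the argument is then the It\^o computation applied to the scalar process $t\mapsto y_t^\pi\delta_t^{\pi,q}+y_t^{\prime\,\pi}\delta_t^{\prime\,\pi,q}$, using Eq.~(\ref{eq:delta-solo}) for $\D\delta^{\pi,q}$, Eq.~(\ref{eq:delta-prime}) for $\D\delta^{\prime\,\pi,q}$, and the adjoint equations~(\ref{eq:proof-y-bsde})--(\ref{eq:proof-y-prime-bsde}) for $\D y^\pi$, $\D y^{\prime\,\pi}$. In the resulting drift, the terms proportional to $\delta_t^{\pi,q}$ coming from $y_t^\pi\nabla_\XX b(t,x_t^\pi,\pi_t)$, from $y_t^{\prime\,\pi}\nabla_\XX c(t,x_t^\pi,\pi_t)$, and from the $y^\pi$--$\delta^{\pi,q}$ cross-variation $\trace[z_t^\pi\,\nabla_\XX\sigma(t,x_t^\pi,\pi_t)\delta_t^{\pi,q}]$ cancel precisely against the term $-\nabla_\XX H(t,x_t^\pi,y_t^\pi,y_t^{\prime\,\pi},z_t^\pi,\pi_t)\delta_t^{\pi,q}$ coming from $\D y^\pi$, by the definition~(\ref{eq:relaxed-hamiltonian-1}) of $H$. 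What remains of the drift is $H(t,x_t^\pi,y_t^\pi,y_t^{\prime\,\pi},z_t^\pi,q_t)-H(t,x_t^\pi,y_t^\pi,y_t^{\prime\,\pi},z_t^\pi,\pi_t)$, by the linearity of $H$ in $(b,c,\sigma)$ and the convention $f(q_t-\pi_t)=\int_\AA f\,\D q_t-\int_\AA f\,\D\pi_t$. Using $\delta_0^{\pi,q}=0=\delta_0^{\prime\,\pi,q}$, the terminal conditions in Eq.~(\ref{eq:proof-bsde-tc}), and the moment bounds of Lemma~\ref{lem:epsilon} together with the Burkholder--Davis--Gundy inequality to kill the stochastic integrals in expectation, I arrive at
\begin{align*}
&\EE\bigl[D^\pi\bigl(\nabla_\XX g(x_T^\pi)\delta_T^{\pi,q}+\delta_T^{\prime\,\pi,q}\bigr)\bigr]\\
&\qquad=\EE\biggl[\int_0^T\Bigl(H(t,x_t^\pi,y_t^\pi,y_t^{\prime\,\pi},z_t^\pi,q_t)-H(t,x_t^\pi,y_t^\pi,y_t^{\prime\,\pi},z_t^\pi,\pi_t)\Bigr)\,\D t\biggr],
\end{align*}
and inserting Eq.~(\ref{eq:var-deltaform}) gives Eq.~(\ref{eq:var-hamil}).

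I expect the main obstacle to be the integrability bookkeeping rather than the (essentially forced) algebra: verifying that every product appearing --- in the two terminal conditions, in the inhomogeneous driver of the $y^\pi$-equation, and in the various stochastic integrals whose expectations must vanish --- lies in the claimed spaces $\Ss_\Ff^{\pbar/(\pbar-1)}$, $\Ss_\Ff^{p/(p-1)}$, $\Hh_\Ff^{\pbar/(\pbar-1)}$, $\Hh_\Ff^{p/(p-1)}$. The choice of the conjugate exponents $\pbar/(\pbar-1)$ and $p/(p-1)$ is precisely so that repeated applications of H\"older's inequality, fed by the feasibility conditions~(\ref{eq:feasible-p}) and the growth bounds~(\ref{eq:baseline-growth-nabla-costs}), close the estimates; the $p=1$ endpoint is the only place requiring the small separate argument noted above.
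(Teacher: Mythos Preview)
Your proposal is correct and will yield the lemma, but it takes a somewhat different route from the paper. The paper does not invoke abstract $\Ll^r$-BSDE well-posedness to obtain $(y^\pi,z^\pi)$; it builds the solution explicitly via the variation-of-constants formula. Concretely, it introduces the fundamental matrix $(U_t^\pi)_{t\in\TT}$ of Eq.~(\ref{eq:delta-solo}) and its inverse $V_t^\pi$, together with $Q_t^\pi\deq\int_0^t\nabla_\XX c(s,x_s^\pi,\pi_s)U_s^\pi\,\D s$; it then martingale-represents the single random variable $\Xi^\pi\deq D^\pi\nabla_\XX g(x_T^\pi)U_T^\pi+D^\pi Q_T^\pi$ to get $(\Lambda^\pi,\Sigma^\pi)$ and \emph{defines} $y_t^\pi\deq(\Lambda_t^\pi-y_t^{\prime\,\pi}Q_t^\pi)V_t^\pi$ and $z_t^\pi$ accordingly, verifying Eq.~(\ref{eq:proof-y-bsde}) by It\^o. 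The duality step is likewise carried out in these transformed coordinates, applying It\^o to $\Lambda_t^\pi(V_t^\pi\delta_t^{\pi,q})+y_t^{\prime\,\pi}(-Q_t^\pi V_t^\pi\delta_t^{\pi,q}+\delta_t^{\prime\,\pi,q})$ rather than directly to $y_t^\pi\delta_t^{\pi,q}+y_t^{\prime\,\pi}\delta_t^{\prime\,\pi,q}$ as you do.

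Your direct approach is cleaner and more in the modern textbook spirit; the paper's explicit construction buys tight control over the H\"older bookkeeping, since $U^\pi,V^\pi$ have moments of all orders and the delicate exponent chasing (their auxiliary $p^\ast,\phat,\ptilde,\qtilde,\pzap$) can be done once on $\Xi^\pi$ and then transferred. In your route the same estimates must be applied directly to $D^\pi\nabla_\XX g(x_T^\pi)$ and to the driver term $y_t^{\prime\,\pi}\nabla_\XX c(t,x_t^\pi,\pi_t)$ \emph{before} you can invoke linear BSDE theory in $\Ss_\Ff^{\pbar/(\pbar-1)}\times\Hh_\Ff^{\pbar/(\pbar-1)}$; the feasibility inequalities~(\ref{eq:feasible-p}) do close these, exactly as you anticipate. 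The cancellation algebra and the vanishing of the stochastic integrals in expectation (pairing $z^\pi\in\Hh_\Ff^{\pbar/(\pbar-1)}$ with $\delta^{\pi,q}\in\Ss_\Ff^{\pbar}$, and $z^{\prime\,\pi}$ with $\delta^{\prime\,\pi,q}\in\Ss_\Ff^{r}$ for the $r>p$ of Eq.~(\ref{eq:delta-prime-p-finite}), which is where the $p=1$ edge case lives) are identical in both approaches.
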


Finally, we show that existence of the adjoints and minimization of
the Hamiltonian is indeed sufficient to establish optimality. 
\begin{lem}
\label{lem:suff}Suppose Assumption~\ref{assu:sufrel} holds, $\pi\in\VvV^{p}(b,\sigma,\nu)$,
and there exists processes $y^{\pi}\in\Ss_{\Ff}^{\pbar/(\pbar-1)}(\Omega;\YY)$,
$y^{\prime\,\pi}\in\Ss_{\Ff}^{p/(p-1)}(\Omega;\YY^{\prime})$, $z^{\pi}\in\Hh_{\Ff}^{\pbar/(\pbar-1)}(\Omega;\ZZ)$
satisfying Eqs.~(\ref{eq:y-bsde}, \ref{eq:y-prime}, \ref{eq:H-inf-1}).
Then 
\begin{gather*}
\rho(\Theta^{\pi})-\rho(\Theta^{q})\leq0
\end{gather*}
for every $q\in\VvV^{p}(b,\sigma,\nu)$.
\end{lem}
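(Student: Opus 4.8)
Proof plan for Lemma~\ref{lem:suff}: the plan is to run the classical convex‑duality sufficiency argument for the stochastic minimum principle, with the risk nonlinearity already folded into the adjoint $y^{\prime\,\pi}$ via (\ref{eq:y-prime}). Fix an arbitrary $q\in\VvV^{p}(b,\sigma,\nu)$. First I would invoke Lemma~\ref{lem:join-spaces} to realise $\pi$, $q$, the adjoint processes $(y^{\pi},y^{\prime\,\pi},z^{\pi})$, and the Brownian motion driving (\ref{eq:y-bsde}) on one common filtered probability space, taking the coupling so that the two state processes have the same initial value; write $\Delta_{t}\deq x_{t}^{q}-x_{t}^{\pi}$, so $\Delta_{0}=0$, and note $C^{\pi},C^{q}\in\Ll^{p}(\Omega;\RR)$ by $p$‑feasibility. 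Applying the $\Ll$‑convexity of $\rho$ (Assumption~\ref{assu:sufrel}(ii), Definition~\ref{def:diff-convexity}) with $U=C^{\pi}$, $U'=C^{q}$ gives
\begin{gather*}
\rho(\Theta^{q})-\rho(\Theta^{\pi})\ \geq\ \EE\bigl[\DF\rho(\Theta^{\pi})(C^{\pi})\,(C^{q}-C^{\pi})\bigr]\ =\ \EE\bigl[D^{\pi}(C^{q}-C^{\pi})\bigr]\ \eqd\ I ,
\end{gather*}
so it suffices to prove $I\geq0$.

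Next I would split $C^{q}-C^{\pi}=(x_{T}^{\prime\,q}-x_{T}^{\prime\,\pi})+(g(x_{T}^{q})-g(x_{T}^{\pi}))$ with $x^{\prime\,\pi},x^{\prime\,q}$ the running‑cost processes of (\ref{eq:sde-aug}). For the running‑cost part, since $y_{T}^{\prime\,\pi}=D^{\pi}$ and, by (\ref{eq:y-prime}), $y_{t}^{\prime\,\pi}=\EE[D^{\pi}\mid\Ff_{t}]$, the tower property and Fubini yield $\EE[D^{\pi}(x_{T}^{\prime\,q}-x_{T}^{\prime\,\pi})]=\EE\bigl[\int_{0}^{T}y_{t}^{\prime\,\pi}(c(t,x_{t}^{q},q_{t})-c(t,x_{t}^{\pi},\pi_{t}))\,\D t\bigr]$. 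For the terminal part, the convexity of $g$ together with the terminal condition $y_{T}^{\pi}=D^{\pi}\nabla_{\XX}g(x_{T}^{\pi})$ and $D^{\pi}\geq0$ gives $\EE[D^{\pi}(g(x_{T}^{q})-g(x_{T}^{\pi}))]\geq\EE[y_{T}^{\pi}\Delta_{T}]$. Hence $I\geq\EE[y_{T}^{\pi}\Delta_{T}]+\EE\bigl[\int_{0}^{T}y_{t}^{\prime\,\pi}(c(t,x_{t}^{q},q_{t})-c(t,x_{t}^{\pi},\pi_{t}))\,\D t\bigr]$.

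The third step is an It\^{o} expansion of $y_{t}^{\pi}\Delta_{t}$ using (\ref{eq:y-bsde}) and (\ref{eq:vague-sde}) for both $\pi$ and $q$. After the stochastic integrals are shown to be true martingales and discarded, and using $\Delta_{0}=0$,
\begin{gather*}
\EE[y_{T}^{\pi}\Delta_{T}]=\EE\!\left[\int_{0}^{T}\!\Bigl(-\nabla_{\XX}H(t,x_{t}^{\pi},y_{t}^{\pi},y_{t}^{\prime\,\pi},z_{t}^{\pi},\pi_{t})\Delta_{t}+y_{t}^{\pi}\bigl(b(t,x_{t}^{q},q_{t})-b(t,x_{t}^{\pi},\pi_{t})\bigr)+\trace\bigl[z_{t}^{\pi}\bigl(\sigma(t,x_{t}^{q},q_{t})-\sigma(t,x_{t}^{\pi},\pi_{t})\bigr)\bigr]\Bigr)\,\D t\right] .
\end{gather*}
Adding the running‑cost term and regrouping, the integrand equals $h(t,x_{t}^{q},q_{t})-h(t,x_{t}^{\pi},\pi_{t})-\nabla_{\XX}h(t,x_{t}^{\pi},\pi_{t})\Delta_{t}$, where $h(t,x,\mu)\deq\int_{\AA}H(t,x,y_{t}^{\pi},y_{t}^{\prime\,\pi},z_{t}^{\pi},a)\mu(\D a)$. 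By Assumption~\ref{assu:sufrel}(i) the map $(x,\mu)\mapsto h(t,x,\mu)$ is convex, so the subgradient inequality (whose $x$‑part cancels the $-\nabla_{\XX}h(t,x_{t}^{\pi},\pi_{t})\Delta_{t}$ term) bounds this integrand below by $\int_{\AA}H(t,x_{t}^{\pi},y_{t}^{\pi},y_{t}^{\prime\,\pi},z_{t}^{\pi},a)(q_{t}-\pi_{t})(\D a)$; since $q$ is $\pbar_{3}$‑admissible we have $q_{t}\in\Pp^{\pbar_{3}}(\AA)$ for Lebesgue‑a.e. $t$, $\PP$‑a.s., so (\ref{eq:H-inf-1}) makes this quantity nonnegative $\D t\otimes\PP$‑a.e. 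Integrating gives $I\geq0$, hence $\rho(\Theta^{\pi})\leq\rho(\Theta^{q})$, which is the claim.

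The main obstacle I anticipate is purely technical: showing that every It\^{o} stochastic integral appearing in the expansion of $y_{t}^{\pi}\Delta_{t}$ is a genuine (not merely local) martingale, which forces the careful pairing of the conjugate exponents $\pbar/(\pbar-1)$ and $p/(p-1)$ carried by $y^{\pi},z^{\pi},y^{\prime\,\pi}$ against the $\Ss_{\Ff}^{\pbar}$‑bound on $x^{\pi},x^{q}$ and the polynomial growth bounds on $b,\sigma,c,\nabla_{\XX}g$ from Assumption~\ref{assu:sde-baseline}, via H\"{o}lder's inequality; a secondary point is justifying the common‑probability‑space construction with a shared initial condition and the transport of the adjoint processes and of the BSDE (\ref{eq:y-bsde}) onto it, which I would dispatch with Lemma~\ref{lem:join-spaces} together with the routine fact that strong solutions and martingale representation are stable under the coupling.
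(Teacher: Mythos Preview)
Your proposal is correct and follows essentially the same route as the paper: use $\Ll$-convexity of $\rho$ and convexity of $g$ to linearise, apply It\^{o}'s product rule to couple the adjoints with $x^{\pi}-x^{q}$, and conclude via the joint convexity of $H$ in $(x,\pi)$ together with (\ref{eq:H-inf-1}). The only cosmetic difference is that the paper applies It\^{o} to $y^{\pi}(x^{\pi}-x^{q})+y^{\prime\,\pi}(x^{\prime\,\pi}-x^{\prime\,q})$ in one shot rather than handling the $y^{\prime\,\pi}$ part via the tower property as you do, and your explicit flag that $D^{\pi}\geq0$ is needed for the $g$-convexity step is exactly what the paper's proof uses (tacitly) at the same point.
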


We can now collect the above together and give the proof of our main
result, Theorem~\ref{thm:ra-min-principle}.
\begin{proof}[Proof of Theorem~\ref{thm:ra-min-principle}]
The first part of the theorem now follow directly from Lemma~\ref{lem:var-hamil}
and Eq.~(\ref{eq:var-hamil}), while the second is a direct consequence
of Lemma~\ref{lem:suff}. The representation of Eq.~(\ref{eq:y-prime-bsde})
follows directly from Lemma~\ref{lem:var-hamil}.
\end{proof}

\section{\label{sec:exprob}Examples of differentiable risk functions and
a portfolio allocation problem}

The purpose of this section is to present an application of the results
of previous sections, and hence the problem we consider is selected
for simplicity while attempting to retain a reasonable degree of practical
significance.

\paragraph*{Risk functions}

As examples of law invariant risk functions, we use the mean-deviation,
the (smoothed) mean-semideviation, and entropic risk functionals.
\begin{defn}
Let $(\Omega,\Sigma,\PP)$ be a probability space. (\emph{i}) \emph{Mean-deviation
risk function} $\rhoMD:\Ll^{2}(\Omega;\RR)\to\RR$ is defined as the
mapping
\begin{gather}
\rhoMD(X)\deq\EE\left[X\right]+\beta\bigl\Vert X-\EE\left[X\right]\bigr\Vert_{2}\quad\forall X\in\Ll^{2}(\Omega;\RR),\label{eq:rho-mean-dev}
\end{gather}
where $\beta>0$. (\emph{ii}) \emph{Mean-semideviation risk function}
$\rhoMSD:\Ll^{1}(\Omega;\RR)\to\RR$ and the \emph{$\epsilon$-smoothed
mean-semideviation risk function} $\rhoEpsMSD:\Ll^{1}(\Omega;\RR)\to\RR$,
$\epsilon>0$, are defined as
\begin{gather*}
\rhoMSD(X)\deq\EE\left[X\right]+\beta\EE\left[(X-\EE\left[X\right])_{+}\right]\quad\forall X\in\Ll^{1}(\Omega;\RR),\\
\rhoEpsMSD(X)\deq\EE\left[X\right]+\beta\EE\left[(X-\EE\left[X\right])_{\epsilon+}\right]\quad\forall X\in\Ll^{1}(\Omega;\RR),
\end{gather*}
where $(\cdot)_{+}:\RR\to\RR_{\geq0}$ and $(\cdot)_{\epsilon+}:\RR\to\RR_{>0}$
are the positive part and $\epsilon$-smoothed positive part functions,
$(x)_{+}\deq x\vee0$ and $(x)_{\epsilon+}\deq x+\epsilon\ln(1+\E^{-x/\epsilon})$
for all $x\in\RR$ and $\epsilon>0$. (\emph{iii}) \emph{Entropic
risk function} is the risk measure $\rhoEntr:\Ll^{\infty}(\Omega;\RR)\to\RR$
defined as
\begin{gather}
\rhoEntr(X)\deq\frac{1}{\theta}\ln\EE\left[\E^{\theta X}\right]\quad\forall X\in\Ll^{\infty}(\Omega;\RR),\label{eq:rho-entropic}
\end{gather}
where $\theta>0$.
\end{defn}

We note that the mean-deviation risk function is convex, positively
homogeneous, and translation invariant, that is, it satisfies Definition~\ref{def:coherent}
items (\emph{ii}), (\emph{iii}), and (\emph{iv}). The $\Ll^{1}(\Omega;\RR)$
mean-semideviation risk measure $\rhoMSD$ was considered in e.g.
\cite{Ruszczynski06}, and it too is convex, positively homogeneous,
and translation invariant, but is additionally monotonic, satisfying
Definition~\ref{def:coherent}(\emph{i}). As noted in Remark~\ref{rem:no-diff},
the positive homogeneity of these functionals implies that they cannot
be everywhere Fréchet differentiable. We demonstrate in the example
problem below that this is not necessarily an issue for our purposes.
Moreover, the $\epsilon$-smoothed mean-semideviation risk function
$\rhoEpsMSD$ uniformly approximates $\rhoMSD$, that is,
\begin{gather*}
0<\rhoEpsMSD(X)-\rhoMSD(X)\leq\epsilon\beta\ln2\quad\forall X\in\Ll^{1}(\Omega;\RR),\,\forall\epsilon>0,
\end{gather*}
but its restriction to $\Ll^{2}(\Omega;\RR)$ is in fact everywhere
Fréchet differentiable (this will be established in Lemma~\ref{lem:easy-derivatives}
below). The smoothed mean-semideviation is also convex and monotonic
which, along with the above estimate, follows directly from the properties\footnote{Specifically, from the inequality $0<(x)_{\epsilon+}-(x)_{+}\leq\epsilon\ln2\,\forall x\in\RR$,
and the monotonicity and convexity of $(\cdot)_{\epsilon+}$.} of the $\epsilon$-smoothed positive part function \cite{Chen95}.
Our definition of $\rhoEpsMSD$ was inspired by the construction of
a smoothed conditional value-at-risk risk functional in \cite{Kouri16}.
The entropic risk function $\rhoEntr$ on the other hand satisfies
monotonicity, convexity, and translation invariance properties, or
items (\emph{i}), (\emph{ii}) and (\emph{iv}) of Definition~\ref{def:coherent}.
It serves as an example of a commonly used risk function that is everywhere
Fréchet differentiable.
\begin{lem}
\label{lem:easy-derivatives}(\emph{i}) The mean-deviation risk function
is Fréchet differentiable at every $X\in\Ll^{2}(\Omega;\RR)$ that
is not almost surely constant, with the derivative $\DF\rhoMD(X)\in\Ll^{2}(\Omega;\RR)$
being
\begin{gather}
\DF\rhoMD(X)=1+\beta\frac{X-\EE[X]}{\bigl\Vert X-\EE[X]\bigr\Vert_{2}}.\label{eq:D-mean-dev}
\end{gather}
Moreover, the derivative does not exist at $X\in\Ll^{2}(\Omega;\RR)$
such that $X=\EE[X]$. It additionally has the $\Ll$-derivative $\DF\rhoMD:\Pp^{2}(\RR)\times\RR\to\RR$
that reads, for all $\mu\in\Pp^{2}(\RR)$ that are not a Dirac measures,
\begin{gather}
\DF\rhoMD(\mu)(x)=1+\beta\frac{x-\int x^{\prime}\mu(\D x^{\prime})}{\left[\int\left(x^{\prime\prime}-\int x^{\prime}\mu(\D x^{\prime})\right)^{2}\mu(\D x^{\prime\prime})\right]^{1/2}}\quad\forall x\in\RR.\label{eq:LD-mean-dev}
\end{gather}

(\emph{ii}) The $\Ll^{2}(\Omega;\RR)$-restriction of the $\epsilon$-smoothed
mean-semideviation risk function $\rhoEpsMSD$, $\epsilon>0$, is
Fréchet differentiable at every $X\in\Ll^{2}(\Omega;\RR)$, and has
the Fréchet- and $\Ll$-derivatives 
\begin{align}
\DF\rhoEpsMSD(X) & =1+\beta\left\{ U_{\epsilon}(X-\EE[X])-\EE\left[U_{\epsilon}(X-\EE[X])\right]\right\} \in\Ll^{\infty}(\Omega;\RR)\nonumber \\
 & \qquad\forall X\in\Ll^{2}(\Omega;\RR),\label{eq:D-mean-semidev}\\
\DF\rhoEpsMSD(\mu)(x) & =1+\beta\biggl\{ U_{\epsilon}\left(x-\int x^{\prime}\mu(\D x^{\prime})\right)\nonumber \\
 & \qquad\qquad-\int U_{\epsilon}\left(x^{\prime\prime}-\int x^{\prime}\mu(\D x^{\prime})\right)\,\mu(\D x^{\prime\prime})\biggr\}\in(1-\beta,1+\beta)\nonumber \\
 & \qquad\forall\mu\in\Pp^{2}(\RR),\,x\in\RR,\nonumber 
\end{align}
respectively, and where $U_{\epsilon}(x)\deq\D(x)_{\epsilon+}/\D x=1/(1+\E^{-x/\epsilon})$
for all $x\in\RR$.

(\emph{iii}) The entropic risk measure is Fréchet differentiable at
every $X\in\Ll^{\infty}(\Omega;\RR)$, with the Fréchet- and $\Ll$-derivatives
$\DF\rhoEntr(X)\in\Ll^{1}(\Omega;\RR)$ and $\DF\rhoEntr(\mu)\in\RR\to\RR$,
$\mu\in\Pp^{\infty}(\RR)$,
\begin{gather*}
\DF\rhoEntr(X)=\frac{\E^{\theta X}}{\EE\left[\E^{\theta X}\right]},\quad\DF\rhoEntr(\mu)(x)=\frac{\E^{\theta x}}{\int\E^{\theta x^{\prime}}\mu(\D x^{\prime})}\quad\forall x\in\RR.
\end{gather*}
\end{lem}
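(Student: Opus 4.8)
The plan is to write each of the three functionals as a composition of (a) the continuous linear map $M\colon X\mapsto X-\EE[X]$ on $\Ll^{2}(\Omega;\RR)$, (b) a smooth scalar nonlinearity applied pointwise followed by taking an expectation, and (c) an outer real function, and then to invoke the chain rule for Fr\'echet derivatives after verifying Fr\'echet (not merely G\^ateaux) differentiability of each factor. The one structural observation used throughout is that $M$ is the orthogonal projection of $\Ll^{2}$ onto $\{Z:\EE[Z]=0\}$, hence self-adjoint, which is why the ``subtract the mean'' structure reappears in the derivatives. Once a Fr\'echet derivative is identified as a random variable of the form $f(X)$ for a deterministic $f$ depending only on $\lL(X)$, Definition~\ref{def:diffability-wrt-measures}(iii) immediately yields the $\Ll$-derivative by replacing every expectation with the corresponding $\mu$-integral; so the only computational content is the Fr\'echet differentiability, together with keeping track that each derivative lands in $\Ll^{p/(p-1)}$.

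For part~(i), $X\mapsto\EE[X]$ is bounded linear with derivative the constant $1$, and the second term is $\beta\,N\circ M$ with $N=\|\cdot\|_{2}$. The Hilbert-space norm is Fr\'echet differentiable at every $Z\neq0$ with derivative (under the Riesz identification) $Z/\|Z\|_{2}$; composing with $M$ and using $M^{\ast}=M$ shows $X\mapsto\|MX\|_{2}$ is Fr\'echet differentiable at every $X$ that is not a.s.\ constant, with derivative $(X-\EE[X])/\|X-\EE[X]\|_{2}\in\Ll^{2}$. Hence $\DF\rhoMD(X)$ is as in~\eqref{eq:D-mean-dev}, lying in $\Ll^{2}=\Ll^{p/(p-1)}$. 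Substituting $\EE[X]=\int x'\,\mu(\D x')$ and $\|X-\EE[X]\|_{2}^{2}=\int(x''-\int x'\mu(\D x'))^{2}\mu(\D x'')$ when $\lL(X)=\mu$ produces~\eqref{eq:LD-mean-dev}. Non-differentiability at constants follows from the explicit identity $\rhoMD(X+\epsilon Y)-\rhoMD(X)=\epsilon\EE[Y]+\beta|\epsilon|\,\|Y-\EE[Y]\|_{2}$, valid when $X=\EE[X]$: for any $Y$ that is not a.s.\ constant the difference quotient has distinct one-sided limits as $\epsilon\to0$, so even the directional derivative fails to exist.

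For part~(ii), restrict to $\Ll^{2}$ and write $\rhoEpsMSD=\EE[\cdot]+\beta\,J\circ M$ with $J(Z)\deq\EE[\phi(Z)]$ and $\phi\deq(\cdot)_{\epsilon+}$; here $\phi\in\Cc^{(1)}(\RR)$ with $\phi'=U_{\epsilon}$ valued in $(0,1)$ and Lipschitz with constant $1/(4\epsilon)$. The key step is Fr\'echet differentiability of $J$ on $\Ll^{2}$ with $\DF J(Z)=U_{\epsilon}(Z)$: by the first-order Taylor expansion with integral remainder, $|J(Z+Y)-J(Z)-\EE[U_{\epsilon}(Z)Y]|\le\EE[\int_{0}^{1}|U_{\epsilon}(Z+sY)-U_{\epsilon}(Z)|\,|Y|\,\D s]\le\tfrac{1}{8\epsilon}\|Y\|_{2}^{2}=o(\|Y\|_{2})$. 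The chain rule together with $M^{\ast}=M$ then gives $\DF\rhoEpsMSD(X)=1+\beta\{U_{\epsilon}(X-\EE[X])-\EE[U_{\epsilon}(X-\EE[X])]\}$, which takes values in $(1-\beta,1+\beta)\subset\Ll^{\infty}\subset\Ll^{2}$ since $U_{\epsilon}\in(0,1)$, as claimed; translating to laws as in part~(i) yields the stated $\Ll$-derivative.

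For part~(iii), write $\rhoEntr=\tfrac{1}{\theta}\,\ell\circ K$ with $\ell=\ln$ and $K(X)\deq\EE[\E^{\theta X}]\colon\Ll^{\infty}\to\RR_{>0}$. Fr\'echet differentiability of $K$ uses the elementary bound $|\E^{\theta Y}-1-\theta Y|\le\tfrac{\theta^{2}}{2}Y^{2}\E^{\theta|Y|}$, which yields $|K(X+Y)-K(X)-\theta\EE[\E^{\theta X}Y]|\le\tfrac{\theta^{2}}{2}\|Y\|_{\infty}^{2}\E^{\theta\|Y\|_{\infty}}\EE[\E^{\theta X}]=o(\|Y\|_{\infty})$, so $\DF K(X)=\theta\E^{\theta X}$, represented as a continuous linear functional on $\Ll^{\infty}$ by the $\Ll^{1}$ element of the same name. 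Since $\ell$ is smooth on $\RR_{>0}$ with $\ell'(t)=1/t$ and $K(X)>0$, the chain rule gives $\DF\rhoEntr(X)=\E^{\theta X}/\EE[\E^{\theta X}]\in\Ll^{1}=\Ll^{p/(p-1)}$, and for $\lL(X)=\mu\in\Pp^{\infty}(\RR)$ the denominator equals $\int\E^{\theta x'}\mu(\D x')$, finite and positive because $\supp\mu$ is bounded, which is the claimed $\Ll$-derivative. The arguments are elementary; I expect the only real obstacles to be bookkeeping, namely the three remainder estimates that upgrade G\^ateaux to Fr\'echet differentiability and checking membership in the correct dual space $\Ll^{p/(p-1)}$, with the single genuinely delicate point being the non-existence of the directional derivative of $\rhoMD$ at constant random variables.
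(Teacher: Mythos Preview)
Your proposal is correct and proceeds along essentially the same lines as the paper: in each part the heart of the argument is a first-order Taylor expansion with an explicit $o(\|\cdot\|)$ remainder bound (the $\tfrac{1}{8\epsilon}\|Y\|_2^2$ estimate in (ii) and the $\tfrac{\theta^2}{2}\|Y\|_\infty^2 e^{\theta\|Y\|_\infty}\EE[e^{\theta X}]$ bound in (iii) match the paper's exactly), followed by reading off the $\Ll$-derivative from the deterministic form of the Fr\'echet derivative.

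The only notable difference is organizational. You package everything via the chain rule, isolating the self-adjoint centering projection $M$ and the Nemytskii map $J$, and you simply cite the standard fact that the Hilbert-space norm is Fr\'echet differentiable away from the origin. The paper instead carries out each computation directly: in~(i) it verifies the Fr\'echet differentiability of $X\mapsto\|X-\EE[X]\|_2$ by an explicit algebraic identity for the remainder, and in~(ii) it absorbs the $M$-step into a single calculation on $X$ rather than separating $J\circ M$. Your use of $M^\ast=M$ to explain why the ``subtract the mean'' term reappears is a cleaner substitute for the paper's observation $\langle X-\EE[X],1\rangle=0$. For the non-differentiability at constants in~(i) your argument is in fact sharper: you exhibit the two-sided difference quotient $\EE[Y]+\beta\,\mathrm{sgn}(\epsilon)\|Y-\EE[Y]\|_2$ and note that the one-sided limits differ, whereas the paper invokes positive homogeneity to obtain $\DF_Y\rhoMD(0)=\rhoMD(Y)$ and argues this is nonlinear in $Y$. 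Both conclusions imply failure of G\^ateaux (hence Fr\'echet) differentiability, but yours shows that even the directional derivative in the sense of the paper's Definition (two-sided limit) does not exist.
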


\begin{rem}
If the $\Ll_{2}$-norm $\Vert\cdot\Vert_{2}$ in Eq.~(\ref{eq:rho-mean-dev})
is replaced by its square, it is easy to verify that the resulting
risk function is everywhere Fréchet differentiable. 
\end{rem}

\paragraph*{Portfolio allocation problem}

As a practical example, we consider a simplified portfolio allocation
problem. An agent manages a portfolio consisting of a risk free bond,
yielding a constant return rate $r>0$, and a risky stock whose price
$(q_{t})_{t\in\TT}$ evolves according to $\D q_{t}=\mu q_{t}\,\D t+\sigma q_{t}\,\D w_{t}$,
$q_{0}=1$, $\mu>0$, $\sigma>0$. Let $N_{t}=B_{t}+q_{t}S_{t}$ be
the net value of the agent's portfolio where $B_{t}$ and $S_{t}$
represent the agent's bond and stock holdings at any $t\in\TT$, respectively.
Let $\phi_{t}\deq q_{t}S_{t}/N_{t}$ be the proportion of the agent's
portfolio allocated to the risky asset, so that $N_{t}$ follows the
stochastic differential equation
\begin{gather}
\D N_{t}=\left[r+(\mu-r)\phi_{t}\right]N_{t}\,\D t+\sigma\phi_{t}N_{t}\,\D w_{t},\label{eq:N-sde}
\end{gather}
with a given initial condition $N_{0}$. Trading is costless and unconstrained
so that $\phi_{t}$ is a choice variable for each $t\in\TT$. We suppose
$\phi_{t}$ is constrained to the interval $\AA=[\underline{\phi},\bar{\phi}]$
where $0<\underline{\phi}<\text{\ensuremath{\bar{\phi}}}<\infty$,
the agent optimizes the allocation so that the risk of the utility
of $N_{T}$ is minimized. Here, the agent values their profits or
losses using a logarithmic utility, so that their total cost evaluates
to $-\ln N_{T}$.

Re-writing Eq.~(\ref{eq:N-sde}) for the logarithm of $N_{t}$, $x_{t}^{\pi}\deq\ln N_{t}$
for all $t\in\TT$, and generalizing to a relaxed controlled process,
we have that
\begin{gather}
\D x_{t}^{\pi}=\left[r+(\mu-r)\int_{\AA}\phi\,\pi_{t}(\D\phi)-\frac{1}{2}\sigma^{2}\int_{\AA}\phi^{2}\,\pi_{t}(\D\phi)\right]\,\D t+\sigma\int_{\AA}\phi\,\pi_{t}(\D\phi)\,\D w_{t},\label{eq:x-phi}
\end{gather}
where $x_{0}^{\pi}=x_{0}\in\RR$ is given. Let $b_{\phi}$ and $\sigma_{\phi}$
be the drift and diffusion coefficients of Eq.~(\ref{eq:x-phi}),
and let $\nu_{\phi}=\delta_{x_{0}}$. Assumption~\ref{assu:sde-baseline}
is now satisfied, with $\pbar_{1}=0$, $\pbar_{2}=0$, $\pbar_{3}=\infty$,
$p_{1}=1$, $p_{2}=0$, $p_{1}^{\prime}=0$, and $p_{2}^{\prime}=0$.
Since the initial condition is deterministic, $\pbar$ may be selected
to be arbitrarily large. It is easy to verify that Eq.~(\ref{eq:feasible-p})
holds for any $p\in[1,\infty)$, so that we may consider $p$-feasible
solutions $\pi\in\VvV^{p}(b_{\phi},\sigma_{\phi},\nu_{\phi})$.

The risk aware control problem, Problem $\pP_{\phi}$, becomes
\begin{gather*}
\pP_{\phi}:\quad\inf_{\pi\in\VvV^{p}(b_{\phi},\sigma_{\phi},\nu_{\phi})}\rho(-x_{T}^{\pi}).
\end{gather*}
We note that for instance the mean-deviation risk function of Eq.~(\ref{eq:rho-mean-dev})
is $\Ll$-differentiable at $-x_{T}^{\pi}$.
\begin{prop}
There is no $\pi\in\VvV^{p}(b_{\phi},\sigma_{\phi},\nu_{\phi})$ such
that $-x_{T}^{\pi}$ is almost surely bounded.
\end{prop}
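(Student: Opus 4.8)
The plan is to show that $x_T^\pi$ is unbounded below for any admissible vague control, which is intuitive since the diffusion coefficient $\sigma\int_\AA \phi\,\pi_t(\D\phi)$ is bounded away from zero. First I would note that the constraint $\AA=[\underline\phi,\bar\phi]$ with $0<\underline\phi$ forces the averaged allocation $\phi_t^{\mathrm{avg}}\deq\int_\AA\phi\,\pi_t(\D\phi)$ to satisfy $\underline\phi\le\phi_t^{\mathrm{avg}}\le\bar\phi$ for all $t\in\TT$, $\PP$-almost surely. Consequently the martingale part of $x^\pi$, namely $M_t\deq\int_0^t\sigma\phi_s^{\mathrm{avg}}\,\D w_s$, is a continuous martingale whose quadratic variation $\langle M\rangle_t=\int_0^t\sigma^2(\phi_s^{\mathrm{avg}})^2\,\D s$ satisfies the two-sided bound $\sigma^2\underline\phi^2\,t\le\langle M\rangle_t\le\sigma^2\bar\phi^2\,t$. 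In particular $\langle M\rangle_T\ge\sigma^2\underline\phi^2 T>0$ almost surely.

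Next, by the Dambis--Dubins--Schwarz theorem there is (possibly on an enlargement of the probability space) a Brownian motion $\beta$ with $M_t=\beta_{\langle M\rangle_t}$. Since $\langle M\rangle_T\ge\sigma^2\underline\phi^2 T\eqd\kappa>0$ deterministically, we have $M_T=\beta_{\langle M\rangle_T}$ with the time index at least $\kappa$; standard properties of Brownian motion then give that $M_T$ is unbounded below on a set of positive probability. More carefully, since $\beta$ is a Brownian motion and $\langle M\rangle_T$ is an $\Ff_T$-measurable random variable bounded below by $\kappa$ and above by $\sigma^2\bar\phi^2 T\eqd\bar\kappa$, one has for every $n\in\NN$ that $\PP(M_T<-n)\ge\PP(\inf_{s\in[\kappa,\bar\kappa]}\beta_s<-n)>0$, using that the infimum of Brownian motion over a nondegenerate interval is an almost surely finite random variable with full support on $(-\infty,\infty)$. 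Hence $M_T$, and therefore $x_T^\pi=x_0+\int_0^T b_\phi(s,\cdot,\pi_s)\,\D s+M_T$, cannot be almost surely bounded below, because the drift term $\int_0^T b_\phi(s,\cdot,\pi_s)\,\D s$ is itself bounded (the integrand lies in $[r-\tfrac12\sigma^2\bar\phi^2+(\mu-r)\underline\phi\wedge\bar\phi,\;r+(\mu-r)\bar\phi]$, a fixed compact interval), so it cannot compensate an unbounded negative contribution from $M_T$.

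Finally I would assemble these pieces: since $-x_T^\pi=-x_0-\int_0^T b_\phi(s,\cdot,\pi_s)\,\D s-M_T$ and $-M_T$ is unbounded above on a positive-probability set while the remaining terms are bounded, $-x_T^\pi$ is not $\PP$-essentially bounded, contradicting the hypothesis. The main obstacle, and the step requiring the most care, is the time-change argument: one must handle the fact that $\langle M\rangle_T$ is random (not deterministic) and correlated with the path of $\beta$, so the cleanest route is to avoid asking for independence and instead use only the deterministic lower bound $\langle M\rangle_T\ge\kappa$ together with the almost-sure event $\{\inf_{s\le\kappa}\beta_s<-n\}$, which has positive probability for each $n$. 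An alternative that sidesteps DDS entirely is to apply the Burkholder--Davis--Gundy inequality or a direct exponential-martingale / Girsanov estimate to show $\EE[\E^{-\lambda x_T^\pi}]=\infty$ fails to be finite uniformly, but the time-change argument is the shortest and most transparent.
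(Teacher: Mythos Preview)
Your approach is essentially the same as the paper's: the drift is bounded and the diffusion coefficient is bounded away from zero, so the terminal value $x_T^\pi$ cannot be bounded. The paper states this in a single sentence without further justification; your Dambis--Dubins--Schwarz argument is a legitimate way to fill in the details.

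However, there is a logical slip in your key inequality. You claim
\[
\PP(M_T<-n)\ge\PP\Bigl(\inf_{s\in[\kappa,\bar\kappa]}\beta_s<-n\Bigr),
\]
but this is the wrong direction: knowing that the \emph{infimum} of $\beta$ over $[\kappa,\bar\kappa]$ lies below $-n$ does not force $\beta_{\langle M\rangle_T}$ to lie below $-n$, since $\langle M\rangle_T$ is a single (random) point in that interval and need not be where the infimum is attained. In fact the inclusion goes the other way. What you need is the event
\[
A_n\deq\Bigl\{\sup_{s\in[\kappa,\bar\kappa]}\beta_s<-n\Bigr\},
\]
i.e.\ $\beta$ stays below $-n$ throughout $[\kappa,\bar\kappa]$; on $A_n$ one has $\beta_{\langle M\rangle_T}<-n$ automatically, regardless of any dependence between $\langle M\rangle_T$ and $\beta$, so $\PP(M_T<-n)\ge\PP(A_n)$. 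The event $A_n$ has positive probability (Brownian motion can reach below $-n$ by time $\kappa$ and remain there until $\bar\kappa$), and the rest of your argument then goes through. Your later remark about ``the almost-sure event $\{\inf_{s\le\kappa}\beta_s<-n\}$'' repeats the same inf/sup confusion (and that event is not almost sure in any case).
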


\begin{proof}
Since for any $\pi\in\VvV^{p}(b_{\phi},\sigma_{\phi},\nu_{\phi})$
the drift and diffusion are bounded, and the latter is always non-zero,
$x_{T}^{\pi}$ can take arbitrarily large values. 
\end{proof}
Since $-x_{T}^{\pi}$ is not bounded, it cannot be constant, and therefore
$\rhoMD$ is $\Ll$-differentiable at the terminal cost. In addition,
the e.g. the mean-deviation risk function or the $\Ll^{2}(\Omega;\RR)$
restriction of the $\epsilon$-smoothed mean-semideviation risk function
together with the cost $-x_{T}^{\pi}$ satisfy Assumption~\ref{assu:baseline-rho}.

We can now use the risk aware minimum principle to characterize an
optimal allocation process. For simplicity, we assume that the $\Ll$-derivative
of the risk function is positive (this is the case for e.g. the $\epsilon$-smoothed
mean-semideviation when $\beta<1$). Non-positive values of the derivative
can also be easily accommodated, but the added complexity would detract
from the intuition of this example, which is to illustrate how risk
awareness can manifest itself in real world applications.
\begin{prop}
\label{prop:alloc}Suppose $\rho:\Ll^{p}(\Omega;\RR)\to\RR$, $p\in[1,\infty)$,
is convex, satisfies Assumption~\ref{assu:baseline-rho}, and has
a positive $\Ll$-derivative, i.e. $\DF\rho(\mu)(x)>0$ for all $\mu\in\Pp^{p}(\RR)$,
$x\in\RR$. The optimal portfolio allocation for Problem~$\pP_{\phi}$
is a strict control $\pi\in\VvV^{p}(b_{\phi},\sigma_{\phi},\nu_{\phi})$
such that $\pi_{t}=\delta_{\phi_{t}}$ for all $t\in\TT$ where 
\begin{gather*}
\phi_{t}=\underline{\phi}\vee\frac{\mu-r+\iota_{t}}{\sigma^{2}}\wedge\bar{\phi}\quad\forall t\in\TT,
\end{gather*}
and where 
\begin{gather}
\iota_{t}\deq\frac{\sigma z_{t}^{\prime}}{y_{t}^{\prime}}\quad\forall t\in\TT,\label{eq:risk-premium}
\end{gather}
is a risk premium in which
\begin{gather}
y_{t}^{\prime}=\EE\bigl[\DF\rho(\lL(-x_{T}^{\pi}))(-x_{T}^{\pi})\bigm|\Ff_{t}\bigr],\label{eq:phi-y-prime}\\
\EE\bigl[\DF\rho(\lL(-x_{T}^{\pi}))(-x_{T}^{\pi})\bigm|\Ff_{t}\bigr]=\EE\bigl[\DF\rho(\lL(-x_{T}^{\pi}))(-x_{T}^{\pi})\bigr]+\int_{0}^{t}z_{t'}^{\prime\,\pi}\D w_{t'},\label{eq:phi-y-prime-bsde}
\end{gather}
for all $t\in\TT$.
\end{prop}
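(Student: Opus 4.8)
The plan is to apply the risk aware minimum principle, Theorem~\ref{thm:ra-min-principle}, to Problem~$\pP_{\phi}$, in which $d_{x}=d_{w}=1$, the running cost is absent ($c\equiv0$) and the terminal cost is $g(x)=-x$, so that $C^{\pi}=-x_{T}^{\pi}$. First I would check that Assumption~\ref{assu:sufrel} is in force, so that both the necessary and the sufficient halves of the theorem are available: $x\mapsto g(x)=-x$ is affine, hence convex; since $b_{\phi},\sigma_{\phi}$ are independent of $x$ and $c\equiv0$, the map $(x,\pi)\mapsto\int_{\AA}H(t,x,y,y^{\prime},z,\phi)\,\pi(\D\phi)$ equals $\int_{\AA}(\,y\,b_{\phi}(t,\phi)+\sigma z\phi\,)\,\pi(\D\phi)$, which is affine in $(x,\pi)$ and hence convex; and $\Ll$-convexity of $\rho$ follows from its assumed convexity, as noted after Definition~\ref{def:diff-convexity}. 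Together with the fact that $\rho$ satisfies Assumption~\ref{assu:baseline-rho} at the terminal cost (as discussed just above the proposition), Theorem~\ref{thm:ra-min-principle} then applies in full.

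Next I would write down the adjoint system of Theorem~\ref{thm:ra-min-principle} and Remark~\ref{rem:y-prime-bsde} for an optimal $\pi$. Because $H$ does not depend on $x$ we have $\nabla_{\XX}H\equiv0$, and $\nabla_{\XX}g\equiv-1$, so Eqs.~(\ref{eq:y-bsde}) and~(\ref{eq:y-prime-bsde}) reduce to $\D y_{t}^{\pi}=z_{t}^{\pi}\cdot\D w_{t}$, $\D y_{t}^{\prime\,\pi}=z_{t}^{\prime\,\pi}\cdot\D w_{t}$, with terminal data $y_{T}^{\pi}=-D^{\pi}$, $y_{T}^{\prime\,\pi}=D^{\pi}$, where $D^{\pi}=\DF\rho(\lL(-x_{T}^{\pi}))(-x_{T}^{\pi})$. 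Thus $y^{\pi}$ and $-y^{\prime\,\pi}$ solve the same driftless backward equation with the same terminal value, and the uniqueness statement of Lemma~\ref{lem:var-hamil} (equivalently, uniqueness in the martingale representation theorem) yields $y_{t}^{\pi}=-y_{t}^{\prime\,\pi}$ and $z_{t}^{\pi}=-z_{t}^{\prime\,\pi}$ for all $t\in\TT$. Equations~(\ref{eq:phi-y-prime}) and~(\ref{eq:phi-y-prime-bsde}) are then exactly Eq.~(\ref{eq:y-prime}) together with the martingale representation of $y^{\prime\,\pi}$. The positivity hypothesis $\DF\rho(\mu)(x)>0$ gives $D^{\pi}>0$ almost surely, hence $y_{t}^{\prime\,\pi}=\EE[D^{\pi}\mid\Ff_{t}]>0$ and therefore $y_{t}^{\pi}<0$ for every $t\in\TT$ --- this sign is what makes the Hamiltonian minimization nondegenerate.

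Finally I would read off $\pi_{t}$ from the Hamiltonian minimization, Eq.~(\ref{eq:H-inf-1}). For fixed $(t,\omega)$ the integrand is the quadratic
\[
\phi\longmapsto H(t,x_{t}^{\pi},y_{t}^{\pi},y_{t}^{\prime\,\pi},z_{t}^{\pi},\phi)=y_{t}^{\pi}\bigl(r+(\mu-r)\phi-\tfrac{1}{2}\sigma^{2}\phi^{2}\bigr)+\sigma z_{t}^{\pi}\phi,
\]
whose leading coefficient $-\tfrac{1}{2}\sigma^{2}y_{t}^{\pi}$ is strictly positive since $y_{t}^{\pi}<0$; it is therefore strictly convex in $\phi$, with unconstrained minimizer $(\mu-r)/\sigma^{2}+z_{t}^{\pi}/(\sigma y_{t}^{\pi})$. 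Because $q\mapsto\int_{\AA}H(t,x_{t}^{\pi},y_{t}^{\pi},y_{t}^{\prime\,\pi},z_{t}^{\pi},\phi)\,q(\D\phi)$ is affine and $\AA=[\underline{\phi},\bar{\phi}]$ is compact, the infimum over $\Pp^{\pbar_{3}}(\AA)=\Pp(\AA)$ is attained only at the Dirac mass at the unique minimizer over $\AA$, namely the projection of the unconstrained minimizer onto $[\underline{\phi},\bar{\phi}]$; hence $\pi_{t}=\delta_{\phi_{t}}$, $\PP$-a.s.\ for Lebesgue-a.e.\ $t$, which makes $\pi$ a strict control. Substituting $z_{t}^{\pi}/y_{t}^{\pi}=z_{t}^{\prime\,\pi}/y_{t}^{\prime\,\pi}$ and recalling $\iota_{t}=\sigma z_{t}^{\prime\,\pi}/y_{t}^{\prime\,\pi}$ rewrites $(\mu-r)/\sigma^{2}+z_{t}^{\pi}/(\sigma y_{t}^{\pi})$ as $(\mu-r+\iota_{t})/\sigma^{2}$, giving $\phi_{t}=\underline{\phi}\vee\frac{\mu-r+\iota_{t}}{\sigma^{2}}\wedge\bar{\phi}$. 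Conversely, since Assumption~\ref{assu:sufrel} holds, any $\pi\in\VvV^{p}(b_{\phi},\sigma_{\phi},\nu_{\phi})$ of this form, paired with the adjoints above, satisfies Eqs.~(\ref{eq:y-bsde},~\ref{eq:y-prime},~\ref{eq:H-inf-1}) and is $\pP_{\phi}$-optimal by Theorem~\ref{thm:ra-min-principle}(ii).

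The only genuinely delicate step is the identity $z_{t}^{\pi}=-z_{t}^{\prime\,\pi}$, which I would justify purely by uniqueness of the adjoint backward SDE rather than by any direct computation; once that is in place the remaining work is the elementary minimization of a scalar quadratic over an interval. I would also flag, without pursuing it, that this argument presupposes the existence of a (strong) solution of the coupled forward--backward system formed by Eq.~(\ref{eq:x-phi}) and the adjoint equations; establishing that well-posedness is a standard but separate matter lying outside the scope of this example.
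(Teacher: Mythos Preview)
Your proof is correct and follows essentially the same line as the paper's: verify Assumption~\ref{assu:sufrel}, write the adjoint system with $\nabla_{\XX}H\equiv0$ and $\nabla_{\XX}g\equiv-1$, use uniqueness of the backward SDE to get $y^{\pi}=-y^{\prime\,\pi}$ and $z^{\pi}=-z^{\prime\,\pi}$, then minimize the strictly convex Hamiltonian over $\AA$ and invoke Theorem~\ref{thm:ra-min-principle}(ii). The only cosmetic difference is that the paper spells out the reduction to Dirac measures via Jensen's inequality, whereas you invoke affinity in $q$ and compactness of $\AA$; the uniqueness of the minimizing Dirac really comes from the strict convexity you already noted, so your argument is complete in substance even if that step could be stated more carefully.
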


We note that interestingly, the risk awareness of the objective function
has now given rise to the additional risk premium process $(\iota_{t})_{t\in\TT}$
defined in Eq.~(\ref{eq:risk-premium}). To wit, the risk premium
vanishes if $\rho$ is the expectation, since then as noted in Corollary~\ref{cor:rn-limit},
$y_{t}^{\prime\,\pi}=1$ for all $t\in\TT$ implying that $z_{t}^{\prime\,\pi}=0$
for all $t\in\TT$. Thus, the risk aware minimum principle may open
new possibilities in e.g. risk pricing theory.

\section{\label{sec:concl}Conclusions}

In Theorem~\ref{thm:ra-min-principle} we have given a risk aware
generalization of the stochastic minimum principle. A notable feature
of the result is the way risk is captured via the risk adjustment
process, essentially the marginal risk at a given time $t\in\TT$,
Eq.~(\ref{eq:y-prime}). We argue that at least some form of a risk
adjustment process is an inevitable consequence of the risk awareness,
or effectively of the nonlinearity of the risk function. In our risk
aware context, it is natural to expect that the optimal control should
account for changes in the way the risk responds to changes in the
terminal cost, given the information $\Ff_{t}$ at any time $t\in\TT$.
Indeed, the \emph{raison d'etre} of dynamic risk measures is their
property of time-consistency which prescribes the dependence of the
risk function on the filtration. In the result we obtained, this risk
accounting is represented by the $\Ff_{t}$-conditional expectation
of the $\Ll$-derivative of the risk function evaluated at the terminal
cost.

Although by not requiring that the risk functions are time-consistent
we have provided a rather general version of a risk aware minimum
principle, we have on the other hand opened ourselves to the possibility
that the \emph{optimal controls} might not be time-consistent. By
this we mean that if the optimization problem were restarted at some
time $t>0$, the optimal value and control might change, and the controller
could be better off by switching to a different control policy. However,
since our risk aware minimum principle characterizes the optimal control
in terms of the risk adjustment process, it is now possible to find
new, sufficient conditions for time consistency of the controls. Moreover,
it may now also be possible to consider constrained optimal control
problems, where the purpose of the constraint is to enforce time-consistency
of optimal controls.

Our minimum principle also gives, up to the knowledge of the authors,
the first characterization of the risk aware optimal control that
can be used to derive conditions under which an optimal control is
strict or Markov. A simple application of Jensen's inequality was
used in the example problem to show that in that instance, a strict
optimal control exists. Generalizations of this statement are not
hard to imagine. The question of existence of Markov controls may
be possible to explore using recent results on forward-backward stochastic
differential equations. For instance \cite{Fromm2013,Fromm2015} present
conditions under which the adjoint processes can be expressed as functions
of the time and state variables, which could allow writing the optimal
control as a function of time and state variables only. 

Finally, one of the key assumptions in the risk aware minimum principle
is the Fréchet differentiability of the risk function $\rho$. For
our results to hold, it is necessary that the risk function be differentiable
over the random variables representing the total cost. Establishing
more precisely what risk functions are Fréchet differentiable over
a sufficiently large subset of random variables would widen the applicability
of the results given in this paper. 

\section*{Acknowledgments}

The authors acknowledge the funding provided by the Singapore Ministry
of Education, Tier II grant MOE2015-T2-2-148, \emph{Practical algorithms
for large-scale sequential optimization}.

\appendix

\section{\label{sec:proofs}Proofs of the results}

\subsection{Proofs for Section 2}

\begin{proof}[Proof of Example~\ref{exa:first-order}]
The main inequality of the example follows from a straight-forward
application of the definition of the perturbed control and the Burkholder-Davis-Gundy
inequality, \cite[Theorem 1.76]{Pardoux2014}. Explicitly,
\begin{align*}
\sup_{t\in\TT}\EE\biggl[\bigl|x_{t}^{\epsilon}-x_{t}\bigr|^{2}\biggr] & =\sup_{t\in\TT}\EE\biggl[\biggl|\int_{0}^{t}\int_{\AA}a\Bigl((1-\epsilon)\pi_{t}+\epsilon q_{t}\Bigr)(\D a)\,\D w_{s}\biggr|^{2}\biggr]\\
 & =\epsilon^{2}\sup_{t\in\TT}\EE\biggl[\biggl|\int_{0}^{t}\int_{\AA}aq_{t}(\D a)\,\D w_{s}\biggr|^{2}\biggr]\leq4\epsilon^{2}\EE\biggl[\int_{0}^{T}\biggl(\int_{\AA}aq_{t}(\D a)\biggr)^{2}\,\D s\biggr]\\
 & \leq4T\epsilon^{2}.
\end{align*}
\end{proof}
\begin{proof}[Proof of Proposition~\ref{prop:basic-existence-uniqueness}]
The drift and diffusion functions $b$ and $\sigma$ are by Assumption~\ref{assu:sde-baseline}(\emph{iii})
$L$-Lipschitz. In addition, by using the growth conditions of Assumption~\ref{assu:sde-baseline}(\emph{ii}),
they satisfy the following boundedness conditions:
\begin{align*}
\EE\left[\left(\int_{0}^{T}\left|\int_{\AA}b(s,0,a)\pi_{s}(\D a)\right|\,\D s\right)^{\bar{p}}\right] & \leq\EE\left[\left(\int_{0}^{T}\int_{\AA}L\left(1+\left|a\right|^{\pbar_{2}}\right)\pi_{s}(\D a)\,\D s\right)^{\bar{p}}\right]\\
 & \leq2^{\bar{p}-1}L^{\bar{p}}T^{\bar{p}}\EE\left[\sup_{t\in\TT}\int_{\AA}\left(1+\left|a\right|^{\pbar\pbar_{2}}\right)\pi_{t}(\D a)\right]\\
 & <\infty,\\
\EE\left[\left(\int_{0}^{T}\left|\int_{\AA}\sigma(s,0,a)\pi_{s}(\D a)\right|^{2}\,\D s\right)^{\bar{p}/2}\right] & \leq\EE\left[\left(\int_{0}^{T}\left(\int_{\AA}L\left(1+\left|a\right|^{\pbar_{2}}\right)\pi_{s}(\D a)\right)^{2}\,\D s\right)^{\bar{p}/2}\right]\\
 & \leq2^{\pbar-1}L^{\bar{p}}T^{\bar{p}}\EE\left[\sup_{t\in\TT}\left(\int_{\AA}\left(1+\left|a\right|^{\bar{p}\pbar_{2}}\right)\pi_{t}(\D a)\right)\right]\\
 & <\infty,
\end{align*}
where we have used the $\pbar_{3}$-admissibility of the control,
that is, Assumption~\ref{assu:sde-baseline}(\emph{vii}) and Eq.~(\ref{eq:admissible}),
and the $\pbar_{2}$ upper bound of Eq.~(\ref{eq:pbar-bounds}),
$\pbar\pbar_{2}\leq\pbar_{3}$. So being, \cite[Theorem 3.17]{Pardoux2014}
states that a unique strong solution $x=(x_{t})_{t\in\TT}\in\Ss_{\Ff}^{\pbar}(\Omega;\XX)$
of Eq.~(\ref{eq:vague-sde}) exists.

We can now estimate the costs using the growth conditions of Assumption~\ref{assu:sde-baseline}(\emph{iv})
\begin{align*}
\EE\left[\left|\int_{0}^{T}\int_{\AA}c(s,x_{s}^{\pi},a)\pi_{s}(\D a)\right|^{p}\right] & \leq\EE\left[T^{p-1}\int_{0}^{T}\int_{\AA}\left|c(s,x_{s}^{\pi},a)\right|^{p}\pi_{s}(\D a)\,\D s\right]\\
 & \leq\EE\left[T^{p-1}\int_{0}^{T}\int_{\AA}L^{p}\left(1+\left|x_{s}^{\pi}\right|^{p_{1}}+\left|a\right|^{p_{2}}\right)^{p}\pi_{s}(\D a)\,\D s\right]\\
 & \leq3^{p-1}L^{p}T^{p-1}\EE\left[\int_{0}^{T}\int_{\AA}\left(1+\left|x_{s}^{\pi}\right|^{pp_{1}}+\left|a\right|^{pp_{2}}\right)\pi_{s}(\D a)\,\D s\right]\\
 & <\infty,\\
\EE\left[\left|g(x_{T}^{\pi})\right|^{p}\right] & \leq L^{p}\EE\left[\left(1+\left|x_{T}^{\pi}\right|^{p_{1}}\right)^{p}\right]\\
 & \leq2^{p-1}L^{p}\EE\left[1+\left|x_{T}^{\pi}\right|^{pp_{1}}\right]\\
 & <\infty.
\end{align*}
To reach the final inequalities, we have used Eq.~(\ref{eq:main-deriv-bounds})
giving $pp_{1}<\pbar-p<\pbar$ (clearly the weaker assumption $p_{1}<\pbar/p$
would have sufficed, but the stronger form shall be used later), and
additionally using Eq.~(\ref{eq:p-less-pbar}), $pp_{2}<\pbar_{3}$,
so that the finiteness of the terms is implied by $x\in\Ss_{\Ff}^{\pbar}(\Omega;\XX)$
and the $\pbar_{3}$-admissibility of the control. We see that $C^{\pi}\in\Ll^{p}(\Omega;\RR)$,
and the proof of the first part is complete.
\end{proof}

\subsection{Proofs for Section 3}
\begin{proof}[Proof of Proposition~\ref{prop:L-deriv}]
This result is proven in \cite[Proposition 5.25]{Carmona2018_I}
for the case of $p=2$; here we are merely pointing out that the statement
naturally holds also in the ``smaller'' spaces $\Ll^{p}(\Omega;\RR^{n})$,
$p\in(2,\infty]$. Let $\psi$ be an $\Ll^{p}(\Omega)$-representation
of $\phi$, whose Fréchet derivative is continuous. Since the embedding
of $\Ll^{p}(\Omega;\RR^{n})$ into $\Ll^{2}(\Omega;\RR^{n})$ is continuous,
the Fréchet derivative is continuous on $\Ll^{2}(\Omega;\RR^{n})$
as well. Therefore, there is an almost surely unique $\Ll$-derivative
$f$ such that $Y=(\Omega\ni\omega\to f(X(\omega)))\in\Ll^{2}(\Omega;\RR^{n})$.
As an element of $\Ll^{2}(\Omega;\RR^{n})$, $Y$ is also in $\Ll^{q}(\Omega;\RR^{n})$,
$q=p/(p-1)$.
\end{proof}

\subsection{\label{subsec:main-proofs}Proofs for Section 4}

For the detailed proofs, we need to extend our notations somewhat.
Let $n,m\in\NN$ and $k_{i}\in\NN$ for all $i\in\{1,\ldots,m\}$.
For all differentiable functions $f:\RR^{n}\to\RR^{k_{1}\times\cdots\times k_{m}}$,
we define $\nabla f:\RR^{n}\to\RR^{k_{1}\times\cdots\times k_{m}\times n}$
so that $(\nabla f(x))_{i_{1},\ldots,i_{m},j}\deq\partial f_{i_{1},\ldots,i_{m}}(x)/\partial x_{j}$
for all $x\in\RR^{n}$, $i_{\ell}\in\{1,\ldots,k_{\ell}\}$, $\ell\in\{1,\ldots,m\}$. 

Let $N,M\in\NN$, and $n_{i},m_{j}\in\NN$ for all $i\in\{1,\ldots,N\}$,
$j\in\{1,\ldots,M\}$. Let $U\in\RR^{n_{1}\times\cdots\times n_{N}}$
and $V\in\RR^{m_{1}\times\cdots\times m_{M}}$. The arrays $UV\in\RR^{n_{1}\times\cdots\times n_{N-1}\times m_{2}\times\cdots\times m_{M}}$
and $U\cddot V\in\RR^{n_{1}\times\cdots\times n_{N-2}\times m_{3}\times\cdots\times m_{M}}$
are defined so that

\begin{gather*}
(UV)_{i_{1},\ldots,i_{N-1},j_{2},\ldots,j_{M}}\deq\sum_{k=1}^{m_{1}}U_{i_{1},\ldots,i_{N-1},k}V_{k,j_{2},\ldots,j_{M}},\\
(U\cddot V)_{i_{1},\ldots,i_{N-2},j_{3},\ldots,j_{M}}\deq\sum_{k_{1}=1}^{m_{1}}\sum_{k_{2}=1}^{m_{2}}U_{i_{1},\ldots,i_{N-2},k_{2},k_{1}}V_{k_{1},k_{2}j_{3},\ldots,j_{M},}
\end{gather*}
for all $i_{\ell}\in\{1,\ldots,n_{\ell}\}$, $\ell\in\{1,\ldots,N-1\}$
and $j_{\ell}\in\{1,\ldots,m_{\ell}\}$, $\ell\in\{2,\ldots,M\}$,
where in the former definition $n_{N}=m_{1}$ and in the latter $n_{N}=m_{1}$
and $n_{N-1}=m_{2}$. In addition, for all $X\in\RR^{n_{N-1}}$, we
define $U\cdot X\in\RR^{n_{1}\times\cdots\times n_{N-2}\times n_{N}}$
as such that
\begin{gather*}
(U\cdot X)_{i_{1},\ldots,i_{N-2},i_{N},}\deq\sum_{k=1}^{m_{1}}U_{i_{1},\ldots,i_{N-2},k,i_{N}}X_{k},
\end{gather*}
for all $i_{\ell}\in\{1,\ldots,n_{\ell}\}$, $\ell\in\{1,\ldots,N-2,N\}$.

We will also repeatedly use the following identity and estimates:
\begin{subequations}
\begin{gather}
f(x)-f(y)=\int_{0}^{1}\nabla f\left((1-\lambda)x+\lambda y\right)(x-y)\,\D\lambda\qquad\forall x,y\in\RR^{n},\,f\in\Cc^{(2)}(\RR^{n}),\,n\in\NN,\label{eq:lambda-diff}\\
\begin{gathered}\int_{0}^{1}\left|(1-\lambda)x+\lambda y\right|^{\alpha}\D\lambda\leq\ell_{\gamma}\left[\bigl(\left|x\right|^{\gamma}+\left|y\right|^{\gamma}\bigr)\vee\bigl(\left|x+y\right|^{\gamma}\bigr)\right]\qquad\forall x,y\in\RR^{n},\,n\in\NN,\,\gamma\in\RR_{\geq0},\\
\ell_{\gamma}\deq2^{-\gamma}\vee\frac{2^{\gamma-1}}{\gamma+1}\quad\forall\gamma\in\RR_{\geq0},
\end{gathered}
\label{eq:cvx-ccv-lambda}\\
\left(\sum_{i=1}^{n}x_{i}\right)^{\gamma}\leq n^{\gamma-1}\sum_{i=1}^{n}x_{i}^{\gamma}\qquad\forall(x_{1},\ldots,x_{n})\in\RR_{\geq0}^{n},\,n\in\NN,\,\gamma\geq1,\label{eq:jensen-sum}
\end{gather}
\end{subequations}
In addition, we shall frequently apply the Burkholder-Davis-Gundy
(BDG) inequality (see e.g. \cite[Theorem 1.76]{Pardoux2014}), and
we will use $C_{r}$, $r\in[1,\infty)$, to denote the constant in
the upper bound given in the inequality.
\begin{proof}[Proof of Proposition~\ref{lem:join-spaces}]
Let $\tilde{\Omega}^{0}\deq\Omega\times\Omega^{\prime}$, $\tilde{\Sigma}^{0}\deq\Sigma\times\Sigma^{\prime}$,
$\tilde{\Ff}^{0}\deq\Ff\times\Ff^{\prime}$, and $\tilde{\PP}^{0}\deq\PP\times\PP^{\prime}$.
The filtered probability space $(\tilde{\Omega},\tilde{\Sigma},\tilde{\Ff},\tilde{\PP})$
is then constructed from $(\tilde{\Omega}^{0},\tilde{\Sigma}^{0},\tilde{\Ff}^{0},\tilde{\PP}^{0})$
by conditioning on the event that the paths of the Brownian motions
$w$ and $w^{\prime}$ are the same: We set $\tilde{\Omega}\deq\{(\omega,\omega^{\prime})\in\tilde{\Omega}^{0}:w(\omega)=w^{\prime}(\omega^{\prime})\}$,
$\tilde{\PP}\deq\tilde{\PP}^{0}(\cdot\mid\tilde{\Omega})$, $\tilde{\Sigma}\deq\{\Gamma\cap\tilde{\Omega}:\Gamma\in\tilde{\Sigma}^{0}\}$,
and $\tilde{\Ff}_{t}\deq\{\Gamma\cap\tilde{\Omega}:\Gamma\in\tilde{\Ff}_{t}^{0}\}$
for all $t\in\TT$. We can then define the processes $\tilde{\pi}(\omega,\omega^{\prime})\deq\pi(\omega)$
and $\tilde{\pi}^{\prime}(\omega,\omega^{\prime})\deq\pi^{\prime}(\omega^{\prime})$
for all $(\omega,\omega^{\prime})\in\tilde{\Omega}$. These are $\tilde{\Ff}$-progressive
by virtue of $\pi$ and $\pi^{\prime}$ being $\Ff$- and $\Ff^{\prime}$-progressive,
respectively. In addition, we can define the $\tilde{\Ff}$-Brownian
motion $\tilde{w}\deq(\tilde{w}_{t})_{t\in\TT}$ as $\tilde{w}_{t}(\omega,\omega^{\prime})\deq w_{t}(\omega)$
for all $(\omega,\omega^{\prime})\in\tilde{\Omega}$. The drift and
diffusion coefficients defined by Eq.~(\ref{eq:sde-joint}) satisfy
Assumptions~\ref{assu:sde-baseline} and by \ref{prop:basic-existence-uniqueness},
a solution $(\tilde{x}_{t},\tilde{x}_{t}^{\prime})_{t\in\TT}$ exists
on $(\tilde{\Omega},\tilde{\Sigma},\tilde{\Ff},\tilde{\PP})$. The
$\Omega$ and $\Omega^{\prime}$ marginals of $\tilde{\PP}$ are again
$\PP$ and $\PP^{\prime}$, and hence the laws of the state space
and control processes on the extended space agree with those on the
original spaces.
\end{proof}
\begin{proof}[Proof of Lemma~\ref{lem:var-vanish}]
We estimate the distance between the original and the perturbed process
as follows. Let $T_{0}\in\TT$ be for now arbitrary. Using the triangle
inequality, the elementary estimate of Eq.~(\ref{eq:jensen-sum}),
and Jensen's inequality,
\begin{align}
\sup_{t\in[0,T_{0}]}\Bigl|x_{t}^{\pi(\alpha,q)}-x_{t}^{\pi}\Bigr|^{\pbar} & \leq\sup_{t\in[0,T_{0}]}\biggr[\left|\int_{0}^{t}\left[b(s,x_{s}^{\pi(\alpha,q)},\pi_{s}(\alpha,q))-b(s,x_{s}^{\pi},\pi_{s})\right]\D s\right|\nonumber \\
 & \qquad\qquad+\left|\int_{0}^{t}\left[\sigma(s,x_{s}^{\pi(\alpha,q)},\pi_{s}(\alpha,q))-\sigma(s,x_{s}^{\pi},\pi_{s})\right]\D w_{s}\right|\biggr]^{\pbar}\nonumber \\
 & =\sup_{t\in[0,T_{0}]}\biggr[\biggl|\int_{0}^{t}\Bigl[b(s,x_{s}^{\pi(\alpha,q)},\pi_{s}(\alpha,q))-b(s,x_{s}^{\pi},\pi_{s}(\alpha,q))\nonumber \\
 & \qquad\qquad\qquad+b(s,x_{s}^{\pi},\pi_{s}(\alpha,q))-b(s,x_{s}^{\pi},\pi_{s})\Bigr]\D s\biggr|\nonumber \\
 & \qquad\qquad+\biggl|\int_{0}^{t}\Bigl[\sigma(s,x_{s}^{\pi(\alpha,q)},\pi_{s}(\alpha,q))-\sigma(s,x_{s}^{\pi},\pi_{s}(\alpha,q))\nonumber \\
 & \qquad\qquad\qquad+\sigma(s,x_{s}^{\pi},\pi_{s}(\alpha,q))-\sigma(s,x_{s}^{\pi},\pi_{s})\Bigr]\D s\biggr|\biggr]^{\pbar}\nonumber \\
 & \leq4^{\pbar-1}\sup_{t\in[0,T_{0}]}\biggr[\biggl|\int_{0}^{t}\Bigl[b(s,x_{s}^{\pi(\alpha,q)},\pi_{s}(\alpha,q))-b(s,x_{s}^{\pi},\pi_{s}(\alpha,q))\Bigr]\D s\biggr|^{\pbar}\nonumber \\
 & \qquad\qquad\qquad+\biggl|\int_{0}^{t}\Bigl[b(s,x_{s}^{\pi},\pi_{s}(\alpha,q))-b(s,x_{s}^{\pi},\pi_{s})\Bigr]\D s\biggr|^{\pbar}\nonumber \\
 & \qquad\qquad\qquad+\biggl|\int_{0}^{t}\Bigl[\sigma(s,x_{s}^{\pi(\alpha,q)},\pi_{s}(\alpha,q))-\sigma(s,x_{s}^{\pi},\pi_{s}(\alpha,q))\Bigr]\D w_{s}\biggr|^{\pbar}\nonumber \\
 & \qquad\qquad\qquad+\biggl|\int_{0}^{t}\Bigl[\sigma(s,x_{s}^{\pi},\pi_{s}(\alpha,q))-\sigma(s,x_{s}^{\pi},\pi_{s})\Bigr]\D w_{s}\biggr|^{\pbar}\biggr]\nonumber \\
 & \leq4^{\pbar-1}\biggr[T^{\pbar-1}\int_{0}^{T_{0}}\Bigl|b(s,x_{s}^{\pi(\alpha,q)},\pi_{s}(\alpha,q))-b(s,x_{s}^{\pi},\pi_{s}(\alpha,q))\Bigr|^{\pbar}\D s\nonumber \\
 & \qquad\qquad+T^{\pbar-1}\int_{0}^{T_{0}}\Bigl|b(s,x_{s}^{\pi},\pi_{s}(\alpha,q))-b(s,x_{s}^{\pi},\pi_{s})\Bigr|^{\pbar}\D s\nonumber \\
 & \qquad\qquad+\sup_{t\in[0,T_{0}]}\biggl|\int_{0}^{t}\Bigl[\sigma(s,x_{s}^{\pi(\alpha,q)},\pi_{s}(\alpha,q))-\sigma(s,x_{s}^{\pi},\pi_{s}(\alpha,q))\Bigr]\D w_{s}\biggr|^{\pbar}\nonumber \\
 & \qquad\qquad+\sup_{t\in[0,T_{0}]}\biggl|\int_{0}^{t}\Bigl[\sigma(s,x_{s}^{\pi},\pi_{s}(\alpha,q))-\sigma(s,x_{s}^{\pi},\pi_{s})\Bigr]\D w_{s}\biggr|^{\pbar}\biggr].\label{eq:temp-sup}
\end{align}
We estimate individually each of the terms on the right of the above
inequality. Starting with the diffusion terms, by the BDG inequality
and by using the definition of $\pi_{t}(\alpha,q)$ as $\pi_{t}+\alpha(q_{t}-\pi_{t})$,
for all $t\in\TT$ along with Assumption~\ref{assu:sde-baseline}(\emph{i}),
i.e. the growth condition on $\sigma$, and similar estimates as above,
\begin{align}
\EE\biggl[\sup_{t\in[0,T_{0}]}\biggl|\int_{0}^{t}\Bigl[\sigma(s,x_{s}^{\pi},\pi_{s}(\alpha,q)) & -\sigma(s,x_{s}^{\pi},\pi_{s})\Bigr]\,\D w_{s}\biggr|^{\pbar}\biggr]\nonumber \\
 & \leq C_{\pbar}\EE\biggl[\biggl(\int_{0}^{T_{0}}\Bigl|\sigma(s,x_{s}^{\pi},\pi_{s}(\alpha,q))-\sigma(s,x_{s}^{\pi},\pi_{s})\Bigr|^{2}\D s\biggr)^{\pbar/2}\biggr]\nonumber \\
 & =\alpha^{\pbar}C_{\pbar}\EE\biggl[\biggl(\int_{0}^{T_{0}}\bigl|\sigma(s,x_{s}^{\pi},\pi_{s}-q_{s})\bigr|^{2}\D s\biggr)^{\pbar/2}\biggr]\nonumber \\
 & \leq\alpha^{\pbar}C_{\pbar}\EE\biggl[\biggl(\int_{0}^{T_{0}}\Bigl[\bigl|\sigma(s,x_{s}^{\pi},\pi_{s})\bigr|+\bigl|\sigma(s,x_{s}^{\pi},q_{s})\bigr|\Bigr]^{2}\D s\biggr)^{\pbar/2}\biggr]\nonumber \\
 & \leq\alpha^{\pbar}C_{\pbar}\EE\biggl[\biggl(\int_{0}^{T_{0}}4L^{2}\biggl[1+|x_{s}^{\pi}|^{\pbar_{1}}+\int_{\AA}|a|^{\pbar_{2}}\left(\frac{\pi_{s}+q_{s}}{2}\right)(\D a)\biggr]^{2}\D s\biggr)^{\pbar/2}\biggr]\nonumber \\
 & \leq(2\alpha L)^{\pbar}T^{\pbar/2}C_{\pbar}\EE\biggl[\sup_{s\in[0,T_{0}]}\biggl(1+|x_{s}^{\pi}|^{\pbar_{1}}+\int_{\AA}|a|^{\pbar_{2}}\left(\frac{\pi_{s}+q_{s}}{2}\right)(\D a)\biggr)^{\pbar}\biggr]\nonumber \\
 & \leq3^{\pbar-1}(2\alpha L)^{\pbar}T^{\pbar/2}C_{\pbar}\EE\biggl[\sup_{s\in[0,T_{0}]}\biggl(1+|x_{s}^{\pi}|^{\pbar\pbar_{1}}+\int_{\AA}|a|^{\pbar\pbar_{2}}\left(\frac{\pi_{s}+q_{s}}{2}\right)(\D a)\biggr)\biggr].\label{eq:temp-sup-a}
\end{align}
We set this inequality aside to be used a moment later, and consider
next the penultimate term in Eq.~(\ref{eq:temp-sup}). Using again
the BDG inequality and Assumption~\ref{assu:sde-baseline}(\emph{iii}),
and the inequality
\begin{align*}
\left(\int_{0}^{t}|f_{s}|^{2}\D s\right)^{\gamma} & \leq\biggl(\sup_{s\in[0,t]}|f_{s}|\biggr)^{\gamma}\biggl(\int_{0}^{t}|f_{s}|\,\D s\biggr)^{\gamma}\\
 & \leq\frac{1}{2}\left[K\sup_{s\in[0,t]}|f_{s}|^{2\gamma}+\frac{1}{K}t^{2\gamma-1}\int_{0}^{t}\sup_{r\in[0,s]}|f_{r}|^{2\gamma}\D s\right]
\end{align*}
for all measurable $f$, $\gamma\geq1$, $t\in\TT$, and any $K>0$
(pre-emptively, we select $K=4^{-\pbar+1}L^{-\pbar}C_{\pbar}^{-1}$),
we obtain
\begin{align}
\EE\biggl[\sup_{t\in[0,T_{0}]} & \biggl|\int_{0}^{t}\Bigl[\sigma(s,x_{s}^{\pi(\alpha,q)},\pi_{s}(\alpha,q))-\sigma(s,x_{s}^{\pi},\pi_{s}(\alpha,q))\Bigr]\D w_{s}\biggr|^{\pbar}\biggr]\nonumber \\
 & \leq C_{\pbar}\EE\biggl[\biggl(\int_{0}^{T_{0}}\Bigl|\sigma(s,x_{s}^{\pi(\alpha,q)},\pi_{s}(\alpha,q))-\sigma(s,x_{s}^{\pi},\pi_{s}(\alpha,q))\Bigr|^{2}\D s\biggr)^{\pbar/2}\biggr]\nonumber \\
 & \leq L^{\pbar}C_{\pbar}\EE\biggl[\biggl(\int_{0}^{T_{0}}\bigl|x_{s}^{\pi(\alpha,q)}-x_{s}^{\pi}\bigr|^{2}\D s\biggr)^{\pbar/2}\biggr]\nonumber \\
 & \leq\frac{1}{2}\EE\biggl[4^{-\pbar+1}\sup_{t\in[0,T_{0}]}\bigl|x_{t}^{\pi(\alpha,q)}-x_{t}^{\pi}\bigr|^{\pbar}+4^{\pbar-1}L^{2\pbar}C_{\pbar}^{2}T^{\pbar-1}\int_{0}^{T_{0}}\sup_{s\in[0,t]}\bigl|x_{s}^{\pi(\alpha,q)}-x_{s}^{\pi}\bigr|^{\pbar}\D t\biggr].\label{eq:temp-sup-b}
\end{align}
The drift terms in Eq.~(\ref{eq:temp-sup}) are estimated similarly,
\begin{align}
\EE\biggl[\int_{0}^{T_{0}}\Bigl|b(s,x_{s}^{\pi},\pi_{s}(\alpha,q)) & -b(s,x_{s}^{\pi},\pi_{s})\Bigr|^{\pbar}\D s\biggr]\nonumber \\
 & =\alpha^{\pbar}\EE\biggl[\int_{0}^{T_{0}}\bigl|b(s,x_{s}^{\pi},\pi_{s}-q_{s})\bigr|^{\pbar}\D s\biggr]\nonumber \\
 & \leq\alpha^{\pbar}\EE\biggl[\int_{0}^{T_{0}}\Bigl[\bigl|b(s,x_{s}^{\pi},\pi_{s})\bigr|+\bigl|b(s,x_{s}^{\pi},q_{s})\bigr|\Bigr]^{\pbar}\D s\biggr]\nonumber \\
 & \leq(2L\alpha)^{\pbar}\EE\biggl[\int_{0}^{T_{0}}\biggl[1+|x_{s}^{\pi}|^{\pbar_{1}}+\int_{\AA}|a|^{\pbar_{2}}\left(\frac{\pi_{s}+q_{s}}{2}\right)(\D a)\biggr]^{\pbar}\D s\biggr]\nonumber \\
 & \leq3^{\pbar-1}(2L\alpha)^{\pbar}\EE\biggl[\int_{0}^{T}\biggl[1+|x_{s}^{\pi}|^{\pbar\pbar_{1}}+\int_{\AA}|a|^{\pbar\pbar_{2}}\left(\frac{\pi_{s}+q_{s}}{2}\right)(\D a)\biggr]\D s\biggr]\nonumber \\
 & \leq3^{\pbar-1}(2L\alpha)^{\pbar}T\EE\biggl[\sup_{t\in[0,T]}\biggl(1+|x_{t}^{\pi}|^{\pbar\pbar_{1}}+\int_{\AA}|a|^{\pbar\pbar_{2}}\left(\frac{\pi_{t}+q_{t}}{2}\right)(\D a)\biggr)\biggr],\label{eq:temp-sup-c}
\end{align}
and
\begin{align}
\EE\biggl[\int_{0}^{T_{0}}\Bigl|b(s,x_{s}^{\pi(\alpha,q)},\pi_{s}(\alpha,q)) & -b(s,x_{s}^{\pi},\pi_{s}(\alpha,q))\Bigr|^{\pbar}\D s\biggr]\nonumber \\
 & \leq L^{\pbar}\EE\biggl[\int_{0}^{T_{0}}\bigl|x_{s}^{\pi(\alpha,q)}-x_{s}^{\pi}\bigr|^{\pbar}\D s\biggr]\nonumber \\
 & \leq L^{\pbar}\EE\biggl[\int_{0}^{T_{0}}\sup_{s\in[0,t]}\bigl|x_{s}^{\pi(\alpha,q)}-x_{s}^{\pi}\bigr|^{\pbar}\D t\biggr].\label{eq:temp-sup-d}
\end{align}
Collecting estimates of Eqs.~(\ref{eq:temp-sup-a},~\ref{eq:temp-sup-b},~\ref{eq:temp-sup-c},~\ref{eq:temp-sup-d})
and applying them to Eq.~(\ref{eq:temp-sup}), and after rearranging,
we get,
\begin{align}
\EE\biggl[\sup_{t\in[0,T_{0}]} & |x_{t}^{\pi(\alpha,q)}-x_{t}^{\pi}|^{\pbar}\biggr]\nonumber \\
 & \leq\underbrace{4^{\pbar-1}T^{\pbar-1}L^{\pbar}\left(2+4^{\pbar-1}L^{\pbar}C_{\pbar}^{2}\right)}_{\eqd S_{1}}\EE\biggl[\int_{0}^{T_{0}}\sup_{s\in[0,t]}\Bigl|x_{s}^{\pi(\alpha,q)}-x_{s}^{\pi}\Bigr|^{\pbar}\D t\biggr]\nonumber \\
 & \qquad+\underbrace{2\cdot12^{\pbar-1}(2L\alpha)^{\pbar}\left(T^{\pbar}+T^{\pbar/2}C_{\pbar}\right)\EE\biggl[\sup_{t\in[0,T]}\biggl(1+|x_{t}^{\pi}|^{\pbar\pbar_{1}}+\int_{\AA}|a|^{\pbar\pbar_{2}}\left(\frac{\pi_{t}+q_{t}}{2}\right)(\D a)\biggr)\biggr]}_{\eqd\alpha^{\pbar}S_{2}}.\label{eq:temp-pre-gronwall}
\end{align}
The expectation in $S_{2}$ is finite, since by Proposition~\ref{prop:basic-existence-uniqueness}
$x^{\pi}\in\Ss_{\Ff}^{\pbar}(\Omega;\XX)$ and $\pbar\pbar_{1}\leq\pbar$
(Assumptions~\ref{assu:sde-baseline} state $\pbar_{1}\in[0,1]$),
and $\pbar\pbar_{2}\leq\pbar_{3}$ by Eq.~(\ref{eq:pbar-bounds})
and the control is $\pbar_{3}$-admissible. Using Grönwall's inequality
(see e.g. \cite[Corollary 6.60]{Pardoux2014}), we find that
\begin{gather}
\EE\left[\sup_{t\in[0,T_{0}]}|x_{t}^{\pi(\alpha,q)}-x_{t}^{\pi}|^{\pbar}\right]\leq\alpha^{\pbar}S_{2}\E^{S_{1}T_{0}}\quad\forall T_{0}\in\TT,\label{eq:temp-gronwall}
\end{gather}
from where Eq.~(\ref{eq:sup-pbar}) follows.

To prove Eq.~(\ref{eq:sup-p}), consider the running cost processes,
$(x_{t}^{\prime\,\pi(\alpha,q)})_{t\in\TT}$ and $(x_{t}^{\prime\,\pi})_{t\in\TT}$.
We have that
\begin{align}
\EE\biggl[\sup_{t\in\TT}|x_{t}^{\prime\,\pi(\alpha,q)}-x_{t}^{\prime\,\pi}|^{p}\biggr] & =\EE\biggl[\sup_{t\in\TT}\biggl|\int_{0}^{t}\Bigl[c(s,x_{s}^{\pi(\alpha,q)},\pi_{s}(\alpha,q))-c(s,x_{s}^{\pi},\pi_{s})\Bigr]\,\D s\biggr|^{p}\biggr]\nonumber \\
 & =\EE\biggl[\sup_{t\in\TT}\biggl|\int_{0}^{t}\Bigl[c(s,x_{s}^{\pi(\alpha,q)},\pi_{s}(\alpha,q))-c(s,x_{s}^{\pi},\pi_{s}(\alpha,q))\nonumber \\
 & \qquad\qquad\qquad+c(s,x_{s}^{\pi},\pi_{s}(\alpha,q))-c(s,x_{s}^{\pi},\pi_{s})\Bigr]\,\D s\biggr|^{p}\biggr]\nonumber \\
 & \leq2^{p-1}\EE\biggl[\biggl(\int_{0}^{T}\Bigl|c(s,x_{s}^{\pi(\alpha,q)},\pi_{s}(\alpha,q))-c(s,x_{s}^{\pi},\pi_{s}(\alpha,q))\Bigr|\,\D s\biggr)^{p}\biggr]\nonumber \\
 & \qquad+2^{p-1}\EE\biggl[\biggl(\int_{0}^{T}\Bigr|c(s,x_{s}^{\pi},\pi_{s}(\alpha,q))-c(s,x_{s}^{\pi},\pi_{s})\Bigl|\,\D s\biggr)^{p}\biggr].\label{eq:temp-p-sup}
\end{align}
The latter term is $\Oo(\alpha^{p})$, which can be verified using
similar estimates and assumptions as before,
\begin{align*}
\EE\biggl[\biggl(\int_{0}^{T}\Bigr|c(s,x_{s}^{\pi},\pi_{s}(\alpha,q)) & -c(s,x_{s}^{\pi},\pi_{s})\Bigl|\biggr]\,\D s\biggr)^{p}\biggr]\\
 & \leq\alpha^{p}\EE\biggl[\biggl(\int_{0}^{T}\Bigr|c(s,x_{s}^{\pi},\pi_{s}-q_{s})\Bigl|\,\D s\biggr)^{p}\biggr]\\
 & \leq\alpha^{p}\EE\biggl[\biggl(\int_{0}^{T}\biggl[\Bigr|c(s,x_{s}^{\pi},\pi_{s})\Bigl|+\Bigr|c(s,x_{s}^{\pi},q_{s})\Bigl|\biggr]\,\D s\biggr)^{p}\biggr]\\
 & \leq3^{p-1}2^{p}\alpha^{p}L^{p}\EE\biggl[\int_{0}^{T}\biggl[1+\left|x_{s}^{\pi}\right|^{pp_{1}}+\int_{\AA}\left|a\right|^{pp_{2}}\left(\frac{\pi_{s}+q_{s}}{2}\right)(\D a)\biggr]\,\D s\biggr)\biggr]\\
 & \in\Oo(\alpha^{p}).
\end{align*}
The finiteness of the expectation on the second to last line now follows
from the fact $x^{\pi}\in\Ss_{\Ff}^{\pbar}$ and $\pbar_{3}$-admissibility
of the control, when one notes that by Eqs.~(\ref{eq:p-less-pbar},~\ref{eq:main-deriv-bounds}),
$pp_{i}<\pbar-p<\pbar\leq\pbar_{3}$, $i\in\{1,2\}$.

To complete the proof of Eq.~(\ref{eq:sup-p}), it then remains to
show that the first term on the right hand side of Eq.~(\ref{eq:temp-p-sup})
is also $\Oo(\alpha^{p})$. The boundedness of $\nabla c$, i.e. Assumption~\ref{assu:sde-baseline}(\emph{v})
implies that 
\begin{align*}
\left|c(t,x_{2},a)-c(t,x_{1},a)\right| & =\left|\int_{0}^{1}\nabla_{\XX}c\bigl(t,x_{1}+\lambda(x_{2}-x_{1}),a\bigr)(x_{2}-x_{1})\,\D\lambda\right|\\
 & \leq\int_{0}^{1}\left|\nabla_{\XX}c\bigl(t,x_{1}+\lambda(x_{2}-x_{1}),a\bigr)\right|\left|x_{2}-x_{1}\right|\,\D\lambda\\
 & \leq\int_{0}^{1}L\left(1+\left|(1-\lambda)x_{1}+\lambda x_{2}\right|^{p_{1}^{\prime}}+|a|^{p_{2}^{\prime}}\right)\left|x_{2}-x_{1}\right|\,\D\lambda\\
 & =L\left(1+\ell_{p_{1}^{\prime}}\left|x_{1}+x_{2}\right|^{p_{1}^{\prime}}\vee\left(\left|x_{1}\right|^{p_{1}^{\prime}}+\left|x_{2}\right|^{p_{1}^{\prime}}\right)+|a|^{p_{2}^{\prime}}\right)\left|x_{2}-x_{1}\right|,\\
 & \qquad\forall(t,x_{1},x_{2},a)\in\TT\times\XX\times\XX\times\AA,
\end{align*}
and where $\ell_{p_{1}^{\prime}}$ is defined in Eq.~(\ref{eq:cvx-ccv-lambda}).
Thus,
\begin{align}
\EE\biggl[\biggl(\int_{0}^{T}\Bigl|c(s,x_{s}^{\pi(\alpha,q)} & ,\pi_{s}(\alpha,q))-c(s,x_{s}^{\pi},\pi_{s}(\alpha,q))\Bigr|\,\D s\biggr)^{p}\biggr]\nonumber \\
 & =\EE\biggl[\biggl(\int_{0}^{T}\Bigl|\int_{\AA}\bigl[c(s,x_{s}^{\pi(\alpha,q)},a)-c(s,x_{s}^{\pi},a)\bigr]\pi_{s}(\alpha,q)(\D a)\Bigr|\,\D s\biggr)^{p}\biggr]\nonumber \\
 & \leq\EE\biggl[\biggl(\int_{0}^{T}L\biggl\{1+\ell_{p_{1}^{\prime}}\left|x_{s}^{\pi(\alpha,q)}+x_{s}^{\pi}\right|^{p_{1}^{\prime}}\vee\left(\left|x_{s}^{\pi(\alpha,q)}\right|^{p_{1}^{\prime}}+\left|x_{s}^{\pi}\right|^{p_{1}^{\prime}}\right)\nonumber \\
 & \qquad\qquad\qquad+\int_{\AA}|a|^{p_{2}^{\prime}}\pi_{s}(\alpha,q)(\D a)\biggr\}\left|x_{s}^{\pi(\alpha,q)}-x_{s}^{\pi}\right|\,\D s\biggr)^{p}\biggr]\nonumber \\
 & \leq L^{p}T^{p}\EE\biggl[\biggl(\sup_{t\in\TT}\biggl\{1+\ell_{p_{1}^{\prime}}\left|x_{t}^{\pi(\alpha,q)}+x_{t}^{\pi}\right|^{p_{1}^{\prime}}\vee\left(\left|x_{t}^{\pi(\alpha,q)}\right|^{p_{1}^{\prime}}+\left|x_{t}^{\pi}\right|^{p_{1}^{\prime}}\right)\nonumber \\
 & \qquad\qquad\qquad+\int_{\AA}|a|^{p_{2}^{\prime}}\pi_{t}(\alpha,q)(\D a)\biggr\}\left|x_{t}^{\pi(\alpha,q)}-x_{t}^{\pi}\right|\biggr)^{p}\biggr]\nonumber \\
 & \leq L^{p}T^{p}\Biggl\{\EE\biggl[\sup_{t\in\TT}\left|x_{t}^{\pi(\alpha,q)}-x_{t}^{\pi}\right|^{p}\biggr]\nonumber \\
 & \qquad\qquad+\ell_{p_{1}^{\prime}}^{p}\EE\biggl[\underbrace{\sup_{t\in\TT}\biggl(\left|x_{t}^{\pi(\alpha,q)}+x_{t}^{\pi}\right|^{p_{1}^{\prime}}\vee\left(\left|x_{t}^{\pi(\alpha,q)}\right|^{p_{1}^{\prime}}+\left|x_{t}^{\pi}\right|^{p_{1}^{\prime}}\right)\biggr)^{p}}_{\eqd Z_{1}}\sup_{t\in\TT}\left|x_{t}^{\pi(\alpha,q)}-x_{t}^{\pi}\right|^{p}\biggr]\nonumber \\
 & \qquad\qquad+\EE\biggl[\underbrace{\sup_{t\in\TT}\biggl(\int_{\AA}|a|^{p_{1}^{\prime}}\pi_{t}(\alpha,q)(\D a)\biggr)^{p}}_{\eqd Z_{2}}\sup_{t\in\TT}\left|x_{t}^{\pi(\alpha,q)}-x_{t}^{\pi}\right|^{p}\biggr]\Biggr\}.\label{eq:temp-zs}
\end{align}
The first term on the right is $o(\alpha^{p})$ by virtue of Eq.~(\ref{eq:sup-pbar})
which we established earlier, and the fact that $p<\pbar$ from Eq.~(\ref{eq:p-less-pbar}).
The remaining two are treated as follows. We define
\begin{gather*}
\zeta_{i}\deq\frac{\pbar}{p_{i}^{\prime}p}\quad\forall i\in\{1,2\}.
\end{gather*}
Using Eqs.~(\ref{eq:p-less-pbar},~\ref{eq:growth-deriv-relation},~\ref{eq:main-deriv-bounds}),
we have that
\begin{gather*}
\zeta_{i}\geq\frac{\pbar}{p_{i}p}>\frac{\pbar}{(\pbar/p-1)p}>\frac{\pbar}{\pbar-p}>1\quad\forall i\in\{1,2\},
\end{gather*}
and consequently,
\begin{gather*}
p<p\frac{\zeta_{i}}{\zeta_{i}-1}<p\frac{\pbar/(\pbar-p)}{\pbar/(\pbar-p)-1}=\pbar\quad\forall i\in\{1,2\}.
\end{gather*}
We can then use Hölder's inequality
\begin{gather}
\EE\biggl[Z_{i}\sup_{t\in\TT}\left|x_{s}^{\pi(\alpha,q)}-x_{s}^{\pi}\right|^{p}\biggr]\leq\EE\Bigl[Z_{i}^{\zeta_{i}}\Bigr]^{\frac{1}{\zeta_{i}}}\EE\biggl[\sup_{t\in\TT}\left|x_{t}^{\pi(\alpha,q)}-x_{t}^{\pi}\right|^{p\frac{\zeta_{i}}{\zeta_{i}-1}}\biggr]^{\frac{\zeta_{i}-1}{\zeta_{i}}}.\label{eq:z-holder-1}
\end{gather}
The latter factor is $\Oo(\alpha^{p})$, again by Eq.~(\ref{eq:sup-pbar}),
and since $p\zeta_{i}/(\zeta_{i}-1)<\pbar$, $i\in\{1,2\}$. We only
need to show $\EE[Z_{i}^{\zeta_{i}}]<\infty$, $i\in\{1,2\}$, which
is straight-forward (note that Eqs.~(\ref{eq:feasible-p}) also imply
$\pbar/p_{1}^{\prime}>1$ and $\pbar/p_{2}^{\prime}>1$):
\begin{align}
\EE\left[Z_{1}^{\zeta_{1}}\right] & =\EE\biggl[\sup_{t\in\TT}\biggl(\left|x_{s}^{\pi(\alpha,q)}+x_{s}^{\pi}\right|^{p_{1}^{\prime}}\vee\left(\left|x_{s}^{\pi(\alpha,q)}\right|^{p_{1}^{\prime}}+\left|x_{s}^{\pi}\right|^{p_{1}^{\prime}}\right)\biggr)^{\pbar/p_{1}^{\prime}}\biggr]\nonumber \\
 & \leq\EE\biggl[\sup_{t\in\TT}\biggl(\left|x_{s}^{\pi(\alpha,q)}+x_{s}^{\pi}\right|^{\pbar}\vee\left[2^{\pbar/p_{1}^{\prime}-1}\left(\left|x_{s}^{\pi(\alpha,q)}\right|^{\pbar}+\left|x_{s}^{\pi}\right|^{\pbar}\right)\right]\biggr)\biggr]\nonumber \\
 & <\infty,\label{eq:z-holder-2}\\
\EE\left[Z_{2}^{\zeta_{2}}\right] & \leq\EE\biggl[\sup_{t\in\TT}\biggl(\int_{\AA}|a|^{p_{2}^{\prime}}\pi_{s}(\alpha,q)(\D a)\biggr)^{\pbar/p_{2}^{\prime}}\biggr]\nonumber \\
 & \leq\EE\biggl[\sup_{t\in\TT}\int_{\AA}|a|^{\pbar}\pi_{s}(\alpha,q)(\D a)\biggr]\nonumber \\
 & <\infty,\label{eq:z-holder-3}
\end{align}
where we have as usual used the fact $x^{\pi},x^{\pi(\alpha,q)}\in\Ss_{\Ff}^{\pbar}$
and the $\pbar_{3}$-admissibility of the control. Therefore Eq.~(\ref{eq:sup-p})
holds.

Finally, we turn to Eq.~(\ref{eq:g-sup-p}). We have that
\begin{align}
\EE\biggl[\Bigl|g\bigl(x_{T}^{\pi(\alpha,q)}\bigr)-g\bigl(x_{T}^{\pi}\bigr)\Bigr|^{p}\biggr] & =\EE\biggl[\biggl|\int_{0}^{1}\nabla_{\XX}g(\lambda x_{T}^{\pi(\alpha,q)}+(1-\lambda)x_{T}^{\pi})(x_{T}^{\pi(\alpha,q)}-x_{T}^{\pi})\,\D\lambda\biggr|^{p}\biggr]\nonumber \\
 & \leq\EE\biggl[\biggl|\int_{0}^{1}L\Bigl(1+\Bigl|\lambda x_{T}^{\pi(\alpha,q)}+(1-\lambda)x_{T}^{\pi}\Bigr|^{p_{1}^{\prime}}\Bigr)\bigl|x_{T}^{\pi(\alpha,q)}-x_{T}^{\pi}\bigr|\,\D\lambda\biggr|^{p}\biggr]\nonumber \\
 & \leq2^{p-1}L^{p}\EE\biggl[\int_{0}^{1}\Bigl(1+\Bigl|\lambda x_{T}^{\pi(\alpha,q)}+(1-\lambda)x_{T}^{\pi}\Bigr|^{pp_{1}^{\prime}}\Bigr)\bigl|x_{T}^{\pi(\alpha,q)}-x_{T}^{\pi}\bigr|^{p}\,\D\lambda\biggr]\nonumber \\
 & \leq2^{p-1}L^{p}\EE\biggl[\ell_{pp_{1}^{\prime}}\biggl[\Bigl(\bigl|x_{T}^{\pi(\alpha,q)}\bigr|^{pp_{1}^{\prime}}+\bigl|x_{T}^{\pi}\bigr|^{pp_{1}^{\prime}}\Bigr)\vee\Bigl(\bigl|x_{T}^{\pi(\alpha,q)}+x_{T}^{\pi}\bigr|^{pp_{1}^{\prime}}\Bigr)\biggr]\bigl|x_{T}^{\pi(\alpha,q)}-x_{T}^{\pi}\bigr|^{p}\biggr]\nonumber \\
 & \leq2^{p-1}L^{p}\ell_{pp_{1}^{\prime}}\EE\biggl[\biggl[\Bigl(\bigl|x_{T}^{\pi(\alpha,q)}\bigr|^{pp_{1}^{\prime}}+\bigl|x_{T}^{\pi}\bigr|^{pp_{1}^{\prime}}\Bigr)\vee\Bigl(\bigl|x_{T}^{\pi(\alpha,q)}+x_{T}^{\pi}\bigr|^{pp_{1}^{\prime}}\Bigr)\biggr]^{\frac{\pbar}{\pbar-p}}\biggl]^{1-\frac{p}{\pbar}}\label{eq:holder-trick-x}\\
 & \qquad\qquad\times\EE\biggl[\bigl|x_{T}^{\pi(\alpha,q)}-x_{T}^{\pi}\bigr|^{\pbar}\biggr]^{\frac{p}{\pbar}}.\nonumber 
\end{align}
The latter expectation is in $\Oo(\alpha^{p})$ by Eq.~(\ref{eq:sup-pbar}),
and the former is finite since by Eqs.~(\ref{eq:growth-deriv-relation},~\ref{eq:main-deriv-bounds}),
\begin{gather*}
pp_{1}^{\prime}\frac{\pbar}{\pbar-p}<(\pbar-p)\frac{\pbar}{\pbar-p}=\pbar,
\end{gather*}
and $x^{\pi},x^{\pi(\alpha,q)}\in\Ss_{\Ff}^{\pbar}$. Eq.~(\ref{eq:g-sup-p})
holds, and the proof is complete.
\end{proof}
\begin{proof}[Proof of Lemma~\ref{lem:epsilon}]
The drift and diffusion coefficients appearing in Eq.~(\ref{eq:delta-solo})
are Lipschitz, since by Assumption~\ref{assu:sde-baseline}(\emph{iii})
the gradients of $b$ and $\sigma$ are bounded. In addition, the
terms in Eq.~(\ref{eq:delta-solo}) that do not depend on $\delta^{\pi}$
satisfy
\begin{align*}
\EE\biggl[\sup_{t\in\TT}\biggl|\int_{0}^{t}b(s,x_{s}^{\pi},q_{s}-\pi_{s})\,\D s & +\int_{0}^{t}\sigma(s,x_{s}^{\pi},q_{s}-\pi_{s})\D w_{s}\biggr|^{\pbar}\biggr]\\
 & \leq\EE\biggl[\sup_{t\in\TT}\biggl|-x_{t}^{\pi}+x_{0}^{\pi}+\int_{0}^{t}b(s,x_{s}^{\pi},q_{s})\,\D s+\int_{0}^{t}\sigma(s,x_{s}^{\pi},q_{s})\,\D w_{s}\biggr|^{\pbar}\biggr]\\
 & \leq3^{\pbar-1}\biggl\{\EE\biggl[\sup_{t\in\TT}\bigl|x_{t}^{\pi}-x_{0}^{\pi}\bigr|^{\pbar}\biggr]+\EE\biggl[\sup_{t\in\TT}\biggl|\int_{0}^{t}b(s,x_{s}^{\pi},q_{s})\,\D s\biggr|^{\pbar}\biggr]\\
 & \qquad\qquad+\EE\biggl[\sup_{t\in\TT}\biggl|\int_{0}^{t}\sigma(s,x_{s}^{\pi},q_{s})\,\D w_{s}\biggr|^{\pbar}\biggr]\biggr\}\\
 & \leq3^{\pbar-1}\biggl\{\EE\biggl[\sup_{t\in\TT}\bigl|x_{t}^{\pi}-x_{0}^{\pi}\bigr|^{\pbar}\biggr]\\
 & \qquad+3^{\pbar-1}L^{\pbar}\bigl(T^{\pbar}+C_{\pbar}T^{\pbar/2}\bigr)\EE\biggl[\sup_{t\in\TT}\Bigl(1+|x_{t}^{\pi}|^{\pbar\pbar_{1}}+\int_{\AA}|a|^{\pbar\pbar_{2}}q_{t}(\D a)\Bigr)\biggr]\biggr\}\\
 & <\infty,
\end{align*}
where we have used the growth conditions of Assumption~\ref{assu:sde-baseline}(\emph{ii}),
Eq.~(\ref{eq:pbar-bounds}), $x^{\pi}\in\Ss_{\Ff}^{\pbar}$, and
the $\pbar_{3}$-admissibility of the control. A unique strong solution
of Eq.~(\ref{eq:delta-solo}) now exists by e.g. \cite[Theorem 3.17]{Pardoux2014},
which also satisfies Eq.~(\ref{eq:delta-pbar-finite}).

To prove Eq.~(\ref{eq:delta-prime-p-finite}), let $r\in(1,\pbar)$
be to be determined. We use the growth conditions on $c$ and $\nabla_{\XX}c$,
Assumptions~\ref{assu:sde-baseline}(\emph{iv},~\emph{v}), to estimate
$\delta^{\prime\,\pi,q}$ as follows.
\begin{align*}
\EE\biggl[\sup_{t\in\TT}\bigl|\delta_{t}^{\prime\,\pi,q}\bigr|^{r}\biggr] & \leq\EE\biggl[\sup_{t\in\TT}\biggl|\int_{0}^{t}\left[\nabla_{\XX}c(s,x_{s}^{\pi},\pi_{s})\delta_{s}^{\pi,q}+c(s,x_{s}^{\pi},q_{s}-\pi_{s})\right]\,\D s\biggr|^{r}\biggr]\\
 & \leq3^{r-1}T\EE\biggl[\sup_{t\in\TT}\bigl|\nabla_{\XX}c(t,x_{t}^{\pi},\pi_{t})\bigr|^{r}\sup_{t\in\TT}\bigl|\delta_{t}^{\pi,q}\bigr|^{r}\\
 & \qquad\qquad\qquad+\sup_{t\in\TT}\bigl|c(t,x_{t}^{\pi},\pi_{t})\bigr|^{r}+\sup_{t\in\TT}\bigl|c(t,x_{t}^{\pi},q_{t})\bigr|^{r}\biggr]\\
 & \leq9^{r-1}T\EE\biggl[\sup_{t\in\TT}L^{r}\Bigl(1+\bigl|x_{t}^{\pi}\bigr|^{rp_{1}^{\prime}}+\int_{\AA}\bigl|a\bigr|^{rp_{2}^{\prime}}\pi_{t}(\D a)\Bigr)\sup_{t\in\TT}\bigl|\delta_{t}^{\pi,q}\bigr|^{r}\\
 & \qquad\qquad\qquad+2\sup_{t\in\TT}L^{r}\Bigl(1+\bigl|x_{t}^{\pi}\bigr|^{rp_{1}}+\frac{1}{2}\int_{\AA}\bigl|a\bigr|^{rp_{2}}\left(\pi_{t}+q_{t}\right)(\D a)\Bigr)\biggr]\\
 & \leq9^{r-1}TL^{r}\biggl\{\EE\biggl[\sup_{t\in\TT}\Bigl(1+\bigl|x_{t}^{\pi}\bigr|^{rp_{1}^{\prime}}+\int_{\AA}\bigl|a\bigr|^{rp_{2}^{\prime}}\pi_{t}(\D a)\Bigr)^{\zeta}\biggr]^{1/\zeta}\EE\biggl[\sup_{t\in\TT}\bigl|\delta_{t}^{\pi,q}\bigr|^{r\zeta/(\zeta-1)}\biggr]^{1-1/\zeta}\\
 & \qquad\qquad\qquad\quad+2\EE\biggl[\sup_{t\in\TT}\Bigl(1+\bigl|x_{t}^{\pi}\bigr|^{rp_{1}}+\frac{1}{2}\int_{\AA}\bigl|a\bigr|^{rp_{2}}\left(\pi_{t}+q_{t}\right)(\D a)\Bigr)\biggr]\biggr\}\quad\forall\zeta\in(1,\infty).
\end{align*}
We select $r\zeta/(\zeta-1)=\pbar$, that is, $\zeta=1/(1-r/\pbar)>1$
so that the expectation involving $\delta^{\pi,q}$ is guaranteed
to be finite (this follows from Eq.~(\ref{eq:delta-pbar-finite})).
In order to ensure that the two other expectations are also finite,
we need to fix $r$ so that
\begin{gather*}
\zeta rp_{1}^{\prime}\leq\pbar,\quad\zeta rp_{2}^{\prime}\leq\pbar_{3},\quad rp_{1}\leq\pbar,\quad rp_{2}\leq\pbar_{3},
\end{gather*}
where the upper bounds are set by $x^{\pi}\in\Ss_{\Ff}^{\pbar}$,
and the $\pbar_{3}$-admissibility of the control. Largest choice
therefore satisfies
\begin{gather*}
r=\frac{\pbar}{\zeta p_{1}^{\prime}}\wedge\frac{\pbar_{3}}{\zeta p_{2}^{\prime}}\wedge\frac{\pbar}{p_{1}}\wedge\frac{\pbar_{3}}{p_{2}}.
\end{gather*}
Note that $\zeta$ here depends on $r$, but since as a function of
$r$, the right hand side is continuous, decreasing, and maps $(0,\pbar)$
to $(0,a)$ for some $a>0$, a solution exists. 

We want to show that $r>p$. Using Eqs.~(\ref{eq:growth-deriv-relation})
and~(\ref{eq:main-deriv-bounds}),
\begin{align*}
r & \geq\frac{\pbar}{(\zeta p_{1}^{\prime})\vee(\zeta p_{2}^{\prime})\vee p_{1}\vee p_{2}}\geq\frac{\pbar}{\zeta}\frac{1}{p_{1}\vee p_{2}}\\
 & >\frac{\pbar}{\zeta}\frac{1}{\pbar/p-1}=(\pbar-r)\frac{1}{\pbar/p-1}.
\end{align*}
This implies that $(\pbar/p-1)r>\pbar-r$, from which we obtain $r>p$.
This establishes Eq.~(\ref{eq:delta-prime-p-finite}).

Let us define
\begin{align*}
b_{t}^{\pi,\pi(\alpha,q)} & \deq b(t,x_{t}^{\pi(\alpha,q)},\pi_{t}(\alpha,q))-b(t,x_{t}^{\pi},\pi_{t})-\alpha\Bigl[\nabla_{\XX}b(t,x_{t}^{\pi},\pi_{t})\delta_{t}^{\pi,q}+b(t,x_{t}^{\pi},q_{t}-\pi_{t})\Bigr],\\
\sigma_{t}^{\pi,\pi(\alpha,q)} & \deq\sigma(t,x_{t}^{\pi(\alpha,q)},\pi_{t}(\alpha,q))-\sigma(t,x_{t}^{\pi},\pi_{t})-\alpha\Bigl[\nabla_{\XX}\sigma(t,x_{t}^{\pi},\pi_{t})\delta_{t}^{\pi,q}+\sigma(t,x_{t}^{\pi},q_{t}-\pi_{t})\Bigr]\\
 & \qquad\qquad\forall t\in\TT.
\end{align*}
Note now that we can write
\begin{align*}
b_{t}^{\pi,\pi(\alpha,q)} & =b(t,x_{t}^{\pi(\alpha,q)},\pi_{t}(\alpha,q))-b(t,x_{t}^{\pi},\pi_{t}(\alpha,q))\\
 & \qquad+b(t,x_{t}^{\pi},\pi_{t}(\alpha,q))-b(t,x_{t}^{\pi},\pi_{t})\\
 & \qquad-\alpha\Bigl[\nabla_{\XX}b(t,x_{t}^{\pi},\pi_{t})\delta_{t}^{\pi,q}+b(t,x_{t}^{\pi},q_{t}-\pi_{t})\Bigr]\\
 & =\int_{0}^{1}\nabla_{\XX}b(t,x_{t}^{\pi}+\lambda(x_{t}^{\pi(\alpha,q)}-x_{t}),\pi_{t}(\alpha,q))(x_{t}^{\pi(\alpha,q)}-x_{t})\,\D\lambda\\
 & \qquad+\alpha b(t,x_{t}^{\pi},q_{t}-\pi_{t})\\
 & \qquad-\alpha\Bigl[\nabla_{\XX}b(t,x_{t}^{\pi},\pi_{t})\delta_{t}^{\pi,q}+b(t,x_{t}^{\pi},q_{t}-\pi_{t})\Bigr]\\
 & =\int_{0}^{1}\nabla_{\XX}b(t,x_{t}^{\pi}+\lambda(x_{t}^{\pi(\alpha,q)}-x_{t}),\pi_{t}(\alpha,q))(x_{t}^{\pi(\alpha,q)}-x_{t})\,\D\lambda\\
 & \qquad-\alpha\nabla_{\XX}b(t,x_{t}^{\pi},\pi_{t})\delta_{t}^{\pi,q}\\
 & =\int_{0}^{1}\nabla_{\XX}b(t,x_{t}^{\pi}+\lambda(x_{t}^{\pi(\alpha,q)}-x_{t}),\pi_{t})(x_{t}^{\pi(\alpha,q)}-x_{t}-\alpha\delta_{t}^{\pi,q})\,\D\lambda\\
 & \qquad+\alpha\int_{0}^{1}\nabla_{\XX}b(t,x_{t}^{\pi}+\lambda(x_{t}^{\pi(\alpha,q)}-x_{t}),q_{t}-\pi_{t})(x_{t}^{\pi(\alpha,q)}-x_{t})\,\D\lambda\\
 & \qquad+\alpha\Bigl[\int_{0}^{1}\nabla_{\XX}b(t,x_{t}^{\pi}+\lambda(x_{t}^{\pi(\alpha,q)}-x_{t}),\pi_{t})\,\D\lambda-\nabla_{\XX}b(t,x_{t}^{\pi},\pi_{t})\Bigr]\delta_{t}^{\pi,q}\\
 & =b_{t}^{\pi,\pi(\alpha,q),1}+\alpha b_{t}^{\pi,\pi(\alpha,q),2}\qquad\forall t\in\TT,
\end{align*}
where we have defined
\begin{align*}
b_{t}^{\pi,\pi(\alpha,q),1} & \deq\int_{0}^{1}\nabla_{\XX}b(t,x_{t}^{\pi}+\lambda(x_{t}^{\pi(\alpha,q)}-x_{t}),\pi_{t})(x_{t}^{\pi(\alpha,q)}-x_{t}-\alpha\delta_{t}^{\pi,q})\,\D\lambda,\\
b_{t}^{\pi,\pi(\alpha,q),2} & \deq\int_{0}^{1}\nabla_{\XX}b(t,x_{t}^{\pi}+\lambda(x_{t}^{\pi(\alpha,q)}-x_{t}),q_{t}-\pi_{t})(x_{t}^{\pi(\alpha,q)}-x_{t})\,\D\lambda\\
 & \qquad+\biggl[\int_{0}^{1}\nabla_{\XX}b(t,x_{t}^{\pi}+\lambda(x_{t}^{\pi(\alpha,q)}-x_{t}),\pi_{t})\,\D\lambda-\nabla_{\XX}b(t,x_{t}^{\pi},\pi_{t})\biggr]\delta_{t}^{\pi,q}\qquad\forall t\in\TT.
\end{align*}
Analogously, we find that
\begin{align*}
\sigma_{t}^{\pi,\pi(\alpha,q)} & =\sigma_{t}^{\pi,\pi(\alpha,q),1}+\alpha\sigma_{t}^{\pi,\pi(\alpha,q),2}\qquad\forall t\in\TT,
\end{align*}
where
\begin{align*}
\sigma_{t}^{\pi,\pi(\alpha,q),1} & \deq\int_{0}^{1}\nabla_{\XX}\sigma(t,x_{t}^{\pi}+\lambda(x_{t}^{\pi(\alpha,q)}-x_{t}),\pi_{t})(x_{t}^{\pi(\alpha,q)}-x_{t}-\alpha\delta_{t}^{\pi,q})\,\D\lambda,\\
\sigma_{t}^{\pi,\pi(\alpha,q),2} & \deq\int_{0}^{1}\nabla_{\XX}\sigma(t,x_{t}^{\pi}+\lambda(x_{t}^{\pi(\alpha,q)}-x_{t}),q_{t}-\pi_{t})(x_{t}^{\pi(\alpha,q)}-x_{t})\,\D\lambda\\
 & \qquad+\biggl[\int_{0}^{1}\nabla_{\XX}\sigma(t,x_{t}^{\pi}+\lambda(x_{t}^{\pi(\alpha,q)}-x_{t}),\pi_{t})\,\D\lambda-\nabla_{\XX}\sigma(t,x_{t}^{\pi},\pi_{t})\biggr]\delta_{t}^{\pi,q}\qquad\forall t\in\TT.
\end{align*}
We can then write, for arbitrary $T_{0}\in\TT$,
\begin{align}
\EE\biggl[\sup_{t\in[0,T_{0}]}\Bigl|x_{t}^{\pi(\alpha,q)} & -x_{t}^{\pi}-\alpha\delta_{t}^{\pi,q}\Bigr|^{\pbar}\biggr]\nonumber \\
 & =\EE\biggl[\sup_{t\in[0,T_{0}]}\biggl|\int_{0}^{t}\Bigl(b_{s}^{\pi,\pi(\alpha,q),1}+\alpha b_{s}^{\pi,\pi(\alpha,q),2}\Bigr)\,\D s\nonumber \\
 & \qquad\qquad+\int_{0}^{t}\Bigl(\sigma_{s}^{\pi,\pi(\alpha,q),1}+\alpha\sigma_{s}^{\pi,\pi(\alpha,q),2}\Bigr)\,\D w_{s}\biggr|^{\pbar}\biggr]\nonumber \\
 & \leq\EE\biggl[\biggl(\int_{0}^{T_{0}}\bigl|b_{s}^{\pi,\pi(\alpha,q),1}\bigr|\,\D s+\alpha\int_{0}^{T_{0}}\bigl|b_{s}^{\pi,\pi(\alpha,q),2}\bigr|\,\D s\nonumber \\
 & \qquad\qquad+\sup_{t\in[0,T_{0}]}\biggl|\int_{0}^{t}\sigma_{s}^{\pi,\pi(\alpha,q),1}\,\D w_{s}\biggr|+\sup_{t\in[0,T_{0}]}\biggl|\int_{0}^{t}\alpha\sigma_{s}^{\pi,\pi(\alpha,q),2}\,\D w_{s}\biggr|\biggr)^{\pbar}\biggr]\nonumber \\
 & \leq4^{\pbar-1}\EE\biggl[T^{\pbar-1}\int_{0}^{T_{0}}\bigl|b_{s}^{\pi,\pi(\alpha,q),1}\bigr|^{\pbar}\,\D s+\alpha^{\pbar}T^{\pbar-1}\int_{0}^{T_{0}}\bigl|b_{s}^{\pi,\pi(\alpha,q),2}\bigr|^{\pbar}\,\D s\nonumber \\
 & \qquad\qquad+C_{\pbar}\biggl(\int_{0}^{T_{0}}\bigl|\sigma_{s}^{\pi,\pi(\alpha,q),1}\bigr|^{2}\,\D s\biggr)^{\pbar/2}+\alpha^{\pbar}C_{\pbar}\biggl(\int_{0}^{T_{0}}\bigl|\sigma_{s}^{\pi,\pi(\alpha,q),2}\bigr|^{2}\,\D s\biggr)^{\pbar/2}\biggr].\label{eq:temp-eps-pbar}
\end{align}
Using arguments we used in the proof of the previous lemma, we can
estimate
\begin{align}
\EE\biggl[\int_{0}^{T_{0}}\Bigl|b_{s}^{\pi,\pi(\alpha,q),1}\Bigr|^{\pbar}\,\D s\biggr] & \leq L^{\pbar}\EE\biggl[\int_{0}^{T_{0}}\Bigl|x_{s}^{\pi(\alpha,q)}-x_{s}-\alpha\delta_{s}^{\pi,q}\Bigr|^{\pbar}\,\D s\biggr]\nonumber \\
 & \leq L^{\pbar}\EE\biggl[\int_{0}^{T_{0}}\sup_{s\in[0,t]}\Bigl|x_{s}^{\pi(\alpha,q)}-x_{s}-\alpha\delta_{s}^{\pi,q}\Bigr|^{\pbar}\,\D t\biggr],\label{eq:temp-bpi}\\
\EE\biggl[\biggl(\int_{0}^{T_{0}}\bigl|\sigma_{s}^{\pi,\pi(\alpha,q),1}\bigr|^{2}\,\D s\biggr)^{\pbar/2}\biggr] & \leq\EE\biggl[\biggl(\sup_{t\in[0,T_{0}]}\bigl|\sigma_{t}^{\pi,\pi(\alpha,q),1}\bigr|\biggr)^{\pbar/2}\biggl(\int_{0}^{T_{0}}\bigl|\sigma_{s}^{\pi,\pi(\alpha,q),1}\bigr|\,\D s\biggr)^{\pbar/2}\biggr]\nonumber \\
 & \leq\frac{1}{2}\EE\biggl[4^{-\pbar+1}C_{\pbar}^{-1}\sup_{t\in[0,T_{0}]}\bigl|x_{s}^{\pi(\alpha,q)}-x_{s}-\alpha\delta_{s}^{\pi,q}\bigr|^{\pbar}\nonumber \\
 & \qquad+4^{\pbar-1}C_{\pbar}L^{2\pbar}T^{\pbar-1}\int_{0}^{T_{0}}\sup_{s\in[0,t]}\bigl|x_{s}^{\pi(\alpha,q)}-x_{s}-\alpha\delta_{s}^{\pi,q}\bigr|^{\pbar}\,\D t\biggr].\label{eq:temp-spi}
\end{align}
Returning to Eq.~(\ref{eq:temp-eps-pbar}), applying the estimates
of Eqs.~(\ref{eq:temp-bpi}) and~(\ref{eq:temp-spi}), and rearranging
and using Grönwall's inequality (see steps leading to Eqs.~(\ref{eq:temp-pre-gronwall},~\ref{eq:temp-gronwall})),
we obtain
\begin{align*}
\EE\biggl[\sup_{t\in[0,T]}\Bigl|x_{t}^{\pi(\alpha,q)}-x_{t}^{\pi}-\alpha\delta_{t}^{\pi,q}\Bigr|^{\pbar}\biggr] & \leq2\cdot4^{\pbar-1}\alpha^{\pbar}\biggl\{ T^{\pbar-1}\int_{0}^{T}\bigl|b_{s}^{\pi,\pi(\alpha,q),2}\bigr|^{\pbar}\,\D s+C_{\pbar}\EE\biggl[\biggl(\int_{0}^{T}\bigl|\sigma_{s}^{\pi,\pi(\alpha,q),2}\bigr|^{2}\,\D s\biggr)^{\pbar/2}\biggr]\biggr\}\\
 & \qquad\times\E^{4^{\pbar-1}T^{\pbar-1}L^{\pbar}\Bigl(2+4^{\pbar-1}L^{\pbar}C_{\pbar}^{2}\Bigr)T}.
\end{align*}
To show that Eq.~(\ref{eq:sup-eps-o-pbar}) holds, it then remains
to show that
\begin{gather*}
\lim_{\alpha\to0}\EE\biggl[\int_{0}^{T}\bigl|b_{s}^{\pi,\pi(\alpha,q),2}\bigr|^{\pbar}\,\D s\biggr]=0,\\
\lim_{\alpha\to0}\EE\biggl[\biggl(\int_{0}^{T}\bigl|\sigma_{s}^{\pi,\pi(\alpha,q),2}\bigr|^{2}\,\D s\biggr)^{\pbar/2}\biggr]=0.
\end{gather*}
These limits are obtained from the definitions of $(b_{t}^{\pi,\pi(\alpha,q),2})_{t\in\TT}$
and $(\sigma_{t}^{\pi,\pi(\alpha,q),2})_{t\in\TT}$ by using Eqs.~(\ref{eq:sup-pbar})
and~(\ref{eq:delta-pbar-finite}), and the continuity and boundedness
of the gradients of $b$ and $\sigma$. Explicitly,
\begin{align*}
\EE\biggl[\int_{0}^{T}\bigl|b_{s}^{\pi,\pi(\alpha,q),2}\bigr|^{\pbar}\,\D s\biggr] & \leq T\EE\biggl[\sup_{t\in\TT}\bigl|b_{t}^{\pi,\pi(\alpha,q),2}\bigr|^{\pbar}\biggr]\\
 & \leq2^{\pbar-1}L^{\pbar}T\EE\biggl[\sup_{t\in\TT}\bigl|x_{t}^{\pi(\alpha,q)}-x_{t}^{\pi}\bigr|^{\pbar}\biggr]\\
 & \qquad+2^{\pbar-1}T\EE\biggl[\biggl|\int_{0}^{1}\nabla_{\XX}b(t,x_{t}^{\pi}+\lambda(x_{t}^{\pi(\alpha,q)}-x_{t}),\pi_{t})\,\D\lambda-\nabla_{\XX}b(t,x_{t}^{\pi},\pi_{t})\biggr|^{\pbar}\bigl|\delta_{t}^{\pi,q}\bigr|^{\pbar}\biggr].
\end{align*}
The first term is $\Oo(\alpha^{\pbar})$ by Eq.~(\ref{eq:sup-pbar}),
and the second is finite by Eq.~(\ref{eq:delta-pbar-finite}) and
boundedness of $\nabla_{\XX}b$, and tends to zero as $\alpha\to0$,
since $\nabla_{\XX}b$ is continuous. This completes the estimates
of the right hand side of Eq.~(\ref{eq:temp-eps-pbar}), and so Eq.~(\ref{eq:sup-eps-o-pbar})
is proven.

Moving on to proving Eq.~(\ref{eq:sup-eps-prime-o-p}), we proceed
as above, and use estimates analogous to those used in the lead up
to Eq.~(\ref{eq:temp-zs}), so that
\begin{align*}
\EE\biggl[\sup_{t\in\TT} & \Bigl|x_{t}^{\prime\,\pi(\alpha,q)}-x_{t}^{\prime\,\pi}-\alpha\delta_{t}^{\prime\,\pi,q}\Bigr|^{p}\biggr]\\
 & =\EE\biggl[\sup_{t\in\TT}\biggl|\int_{0}^{t}\Bigl(c(s,x_{s}^{\pi(\alpha,q)},\pi_{s}(\alpha,q))-c(s,x_{s}^{\pi},\pi_{s}(\alpha,q))\Bigr)\,\D s\\
 & \qquad\qquad+\int_{0}^{t}\Bigl(c(s,x_{s}^{\pi},\pi_{s}(\alpha,q))-c(s,x_{s}^{\pi},\pi_{s})\Bigr)\,\D s\\
 & \qquad\qquad-\alpha\int_{0}^{t}\left[\nabla_{\XX}c(s,x_{s}^{\pi},\pi_{s})\delta_{s}^{\pi,q}+c(t,x_{s}^{\pi},q_{s}-\pi_{s})\right]\,\D s\biggr|^{p}\biggr]\\
 & =\EE\biggl[\sup_{t\in\TT}\biggl|\int_{0}^{t}\int_{0}^{1}c(s,x_{s}^{\pi}+\lambda(x_{s}^{\pi(\alpha,q)}-x_{s}^{\pi}),\pi_{s}(\alpha,q))(x_{s}^{\pi(\alpha,q)}-x_{s}^{\pi}-\alpha\delta_{s}^{\pi,q})\,\D\lambda\,\D s\\
 & \qquad\qquad+\alpha\int_{0}^{t}\biggl[\int_{0}^{1}\nabla_{\XX}c(s,x_{s}^{\pi}+\lambda(x_{s}^{\pi(\alpha,q)}-x_{s}^{\pi}),\pi_{s}(\alpha,q))\,\D\lambda-\nabla_{\XX}c(s,x_{s}^{\pi},\pi_{s})\biggr]\delta_{s}^{\pi,q}\,\D s\biggr|^{p}\biggr]\\
 & =2^{p-1}\EE\biggl[L^{p}T^{p}\biggl\{\sup_{t\in\TT}\bigl|x_{t}^{\pi(\alpha,q)}-x_{t}^{\pi}-\alpha\delta_{t}^{\pi,q}\bigr|^{p}\\
 & \qquad\qquad\qquad\qquad+\ell_{pp_{1}^{\prime}}\sup_{t\in\TT}\Bigl(\Bigr(\bigl|x_{t}^{\pi(\alpha,q)}\bigr|^{pp_{1}^{\prime}}+\bigl|x_{t}^{\pi}\bigr|^{pp_{1}^{\prime}}\Bigl)\vee\bigl|x_{t}^{\pi(\alpha,q)}+x_{t}^{\pi}\bigr|^{pp_{1}^{\prime}}\Bigr)\sup_{t\in\TT}\bigl|x_{t}^{\pi(\alpha,q)}-x_{t}^{\pi}-\alpha\delta_{t}^{\pi,q}\bigr|^{p}\\
 & \qquad\qquad\qquad\qquad+\sup_{t\in\TT}\biggl(\int_{\AA}|a|^{pp_{2}^{\prime}}\pi_{t}(\alpha,q)(\D a)\biggr)\sup_{t\in\TT}\bigl|x_{t}^{\pi(\alpha,q)}-x_{t}^{\pi}-\alpha\delta_{t}^{\pi,q}\bigr|^{p}\biggr\}\\
 & \qquad\qquad+\alpha^{p}\sup_{t\in\TT}\biggl|\int_{0}^{t}\biggl[\int_{0}^{1}\nabla_{\XX}c(s,x_{s}^{\pi}+\lambda(x_{s}^{\pi(\alpha,q)}-x_{s}^{\pi}),\pi_{s}(\alpha,q))\,\D\lambda-\nabla_{\XX}c(s,x_{s}^{\pi},\pi_{s})\biggr]\delta_{s}^{\pi,q}\,\D s\biggr|^{p}\biggr].
\end{align*}
The terms on the three first lines on the right of the last inequality
can now be shown to be $o(\alpha^{p})$ using Eq.~(\ref{eq:sup-eps-o-pbar})
and an application of Hölder's inequality mimicking the steps in Eqs.~(\ref{eq:z-holder-1},~\ref{eq:z-holder-2},~\ref{eq:z-holder-3}).
Similarly applying Hölder's inequality to the remaining term on the
last line yields
\begin{align*}
\EE\biggl[\sup_{t\in\TT}\Bigl|\int_{0}^{1}\nabla_{\XX}c(t,x_{t}^{\pi}+\lambda(x_{t}^{\pi(\alpha,q)}-x_{t}^{\pi}),\pi_{t}(\alpha,q))\,\D\lambda & -\nabla_{\XX}c(t,x_{t}^{\pi},\pi_{t})\Bigr|^{p}\sup_{t\in\TT}\bigl|\delta_{t}^{\pi,q}\bigr|^{p}\biggr]<\infty,
\end{align*}
and, by continuity of $\nabla_{\XX}c$, this term is in $o(\alpha^{0})$,
and Eq.~(\ref{eq:sup-eps-prime-o-p}) follows.
\end{proof}
\begin{proof}[Proof of Lemma~\ref{lem:delta-term}]
Using the $\Ll$-differentiability of $\rho$ and Eq.~(\ref{eq:phi-expansion})
and the short-hand of Eq.~(\ref{eq:ThetaD}), we have that
\begin{gather*}
\rho(\Theta^{\pi(\alpha,q)})=\rho(\Theta^{\pi})+\EE\Bigr[D^{\pi}\left(\theta(x_{T}^{\pi(\alpha,q)},x_{T}^{\prime\,\pi(\alpha,q)})-\theta(x_{T}^{\pi},x_{T}^{\prime\,\pi})\right)\Bigl]+R^{\pi},
\end{gather*}
where 
\[
R^{\pi}\in o\left(\Lnorm{\theta(x_{T}^{\pi(\alpha,q)},x_{T}^{\prime\,\pi(\alpha,q)})-\theta(x_{T}^{\pi},x_{T}^{\prime\,\pi})}p\right)
\]
By Lemma~\ref{lem:var-vanish},
\begin{align*}
\Lnorm{\theta(x_{T}^{\pi(\alpha,q)},x_{T}^{\prime\,\pi(\alpha,q)})-\theta(x_{T}^{\pi},x_{T}^{\prime\,\pi})}p & =\Lnorm{g(x_{T}^{\pi(\alpha,q)})-g(x_{T}^{\pi})+x_{T}^{\prime\,\pi(\alpha,q)}-x_{T}^{\prime\,\pi}}p\in\Oo(\alpha),
\end{align*}
and therefore $R^{\pi}\in o(\alpha)$. From the optimality condition
of Eq.~(\ref{eq:var-optimality}),
\begin{align*}
0 & \leq\EE\left[D^{\pi}\left(\theta(x_{T}^{\pi(\alpha,q)},x_{T}^{\prime\,\pi(\alpha,q)})-\theta(x_{T}^{\pi},x_{T}^{\prime\,\pi})\right)\right]+R^{\pi}\\
 & =\EE\left[D^{\pi}\left(g(x_{T}^{\pi(\alpha,q)})-g(x_{T}^{\pi})+x_{T}^{\prime\,\pi(\alpha,q)}-x_{T}^{\prime\,\pi}\right)\right]+R^{\pi}\\
 & =\EE\left[D^{\pi}\left(\int_{0}^{1}\nabla_{\XX}g(\lambda x_{T}^{\pi(\alpha,q)}+(1-\lambda)x_{T}^{\pi})(x_{T}^{\pi(\alpha,q)}-x_{T}^{\pi})\,\D\lambda+x_{T}^{\prime\,\pi(\alpha,q)}-x_{T}^{\prime\,\pi}\right)\right]+R^{\pi}\\
 & =\EE\biggl[D^{\pi}\biggl(\int_{0}^{1}\nabla_{\XX}g(\lambda x_{T}^{\pi(\alpha,q)}+(1-\lambda)x_{T}^{\pi})(x_{T}^{\pi(\alpha,q)}-x_{T}^{\pi}-\alpha\delta_{t}^{\pi,q}+\alpha\delta_{t}^{\pi,q})\,\D\lambda\\
 & \qquad\qquad\quad+x_{T}^{\prime\,\pi(\alpha,q)}-x_{T}^{\prime\,\pi}-\alpha\delta_{t}^{\prime\,\pi,q}+\alpha\delta_{t}^{\prime\,\pi,q}\biggr)\biggr]+R^{\pi}\\
 & =\alpha\EE\biggl[D^{\pi}\biggl(\int_{0}^{1}\nabla_{\XX}g(\lambda x_{T}^{\pi(\alpha,q)}+(1-\lambda)x_{T}^{\pi})\delta_{t}^{\pi,q}\,\D\lambda+\delta_{t}^{\prime\,\pi,q}\biggr)\biggr]\\
 & \qquad+\EE\biggl[D^{\pi}\biggl(\int_{0}^{1}\nabla_{\XX}g(\lambda x_{T}^{\pi(\alpha,q)}+(1-\lambda)x_{T}^{\pi})(x_{T}^{\pi(\alpha,q)}-x_{T}^{\pi}-\alpha\delta_{t}^{\pi,q})\,\D\lambda\\
 & \qquad\qquad\quad+x_{T}^{\prime\,\pi(\alpha,q)}-x_{T}^{\prime\,\pi}-\alpha\delta_{t}^{\prime\,\pi,q}\biggr)\biggr]+R^{\pi}\\
 & \leq\alpha\EE\biggl[D^{\pi}\biggl(\int_{0}^{1}\nabla_{\XX}g(\lambda x_{T}^{\pi(\alpha,q)}+(1-\lambda)x_{T}^{\pi})\delta_{t}^{\pi,q}\,\D\lambda+\delta_{t}^{\prime\,\pi,q}\biggr)\biggr]\\
 & \qquad+\EE\Bigl[\bigl|D^{\pi}\bigr|^{p/(p-1)}\Bigr]^{1-1/p}\EE\biggl[\biggl|\int_{0}^{1}\nabla_{\XX}g(\lambda x_{T}^{\pi(\alpha,q)}+(1-\lambda)x_{T}^{\pi})(x_{T}^{\pi(\alpha,q)}-x_{T}^{\pi}-\alpha\delta_{t}^{\pi,q})\,\D\lambda\\
 & \qquad\qquad\quad+x_{T}^{\prime\,\pi(\alpha,q)}-x_{T}^{\prime\,\pi}-\alpha\delta_{t}^{\prime\,\pi,q}\biggr|^{p}\biggr]^{1/p}+R^{\pi}\\
 & \leq\alpha\EE\biggl[D^{\pi}\biggl(\int_{0}^{1}\nabla_{\XX}g(\lambda x_{T}^{\pi(\alpha,q)}+(1-\lambda)x_{T}^{\pi})\delta_{t}^{\pi,q}\,\D\lambda+\delta_{t}^{\prime\,\pi,q}\biggr)\biggr]\\
 & \qquad+\EE\Bigl[\bigl|D^{\pi}\bigr|^{p/(p-1)}\Bigr]^{1-1/p}\EE\biggl[2^{p-1}\biggl(\int_{0}^{1}L\Bigl(1+\bigl|\lambda x_{T}^{\pi(\alpha,q)}+(1-\lambda)x_{T}^{\pi}\bigr|^{p_{1}^{\prime}}\Bigr)^{p}\Bigl|x_{T}^{\pi(\alpha,q)}-x_{T}^{\pi}-\alpha\delta_{t}^{\pi,q}\Bigr|^{p}\,\D\lambda\\
 & \qquad\qquad\quad+\Bigl|x_{T}^{\prime\,\pi(\alpha,q)}-x_{T}^{\prime\,\pi}-\alpha\delta_{t}^{\prime\,\pi,q}\Bigr|^{p}\biggr)\biggr]^{1/p}+R^{\pi}.
\end{align*}
Dividing this by $\alpha$, taking the limit $\alpha\to0$ while using
a similar estimate as in Eq.~(\ref{eq:holder-trick-x}) and applying
Lemma~\ref{lem:epsilon}, we obtain the equation claimed in the statement
of this lemma.
\end{proof}
\begin{proof}[Proof of Lemma~\ref{lem:var-hamil}]
We give the proof for $p>1$, but comment at relevant places on changes
needed to accommodate the $p=1$ case. The statement of the lemma
amounts to expressing Eq.~(\ref{eq:var-deltaform}) by using processes
that are constructed to satisfy Eq.~(\ref{eq:relaxed-bsde}). For
brevity, we set $B_{t}^{\pi}\deq\nabla_{\XX}b(t,x_{t}^{\pi},\pi_{t})\in\RR^{d_{x}\times d_{x}}$,
$F_{t}^{\pi}\deq\nabla_{\XX}c(t,x_{t}^{\pi},\pi_{t})\in\RR^{1\times d_{x}}$,
$S_{t}^{\pi}\deq\nabla_{\XX}\sigma(t,x_{t}^{\pi},\pi_{t})\in\RR^{d_{x}\times d_{w}\times d_{x}}$
for all $t\in\TT$.

Let $(U_{t}^{\pi})_{t\in\TT}$ be the fundamental solution of Eq.~(\ref{eq:delta-solo}),
i.e. $U_{t}^{\pi}\in\RR^{d_{x}\times d_{x}}$, $t\in\TT$, $U_{0}^{\pi}=I$,
where $I$ is the identity matrix, and 
\begin{gather}
\D U_{t}^{\pi}=B_{t}^{\pi}U_{t}^{\pi}\,\D t+(S_{t}^{\pi}U_{t}^{\pi})\cdot\D w_{t}.\label{eq:U-sde-1}
\end{gather}
The drift and diffusion functions, $(t,U)\to(B_{t}^{\pi}U)_{t\in\TT}$
and $(t,U)\to(S_{t}^{\pi}U)_{t\in\TT}$, $(t,U)\in\TT\times\RR^{d_{x}\times d_{s}}$,
are $L$-Lipschitz in $U$ for all $t\in\TT$ since by Assumption~\ref{assu:sde-baseline}(\emph{iii})
the gradients of $b$ and $\sigma$ are bounded. By an application
of e.g. \cite[Theorem 3.17]{Pardoux2014}, we find that Eq.~(\ref{eq:U-sde-1})
has a unique strong solution satisfying
\begin{gather}
\EE\biggl[\sup_{t\in\TT}\bigl|U_{t}^{\pi}\bigr|^{p_{0}}\biggr]<\infty\qquad\forall p_{0}\in[1,\infty).\label{eq:U-S-norm}
\end{gather}
In addition, we define the process $(V_{t}^{\pi})_{t\in\TT}$ as the
unique strong solution of the stochastic differential equation
\begin{gather*}
\D V_{t}^{\pi}=V_{t}^{\pi}\left(-B_{t}^{\pi}+S_{t}^{\pi}\cddot S_{t}^{\pi}\right)\,\D t-(V_{t}^{\pi}S_{t}^{\pi})\cdot\D w_{t},\\
V_{0}^{\pi}=I.
\end{gather*}
Such a solution exists by the same argument as used for Eq.~(\ref{eq:U-sde-1}),
and moreover,
\begin{gather}
\EE\biggl[\sup_{t\in\TT}\bigl|V_{t}^{\pi}\bigr|^{p_{0}}\biggr]<\infty\qquad\forall p_{0}\in[1,\infty)\label{eq:V-S-norm}
\end{gather}
holds. It is easily verified that $V_{t}^{\pi}=(U_{t}^{\pi})^{-1}$
for all $t\in\TT$ by applying Itô's lemma to $t\to V_{t}^{\pi}U_{t}^{\pi}$
and $t\to U_{t}^{\pi}V_{t}^{\pi}$. Finally, one additional, $\RR^{1\times d_{x}}$-valued
process will be necessary. We define $(Q_{t}^{\pi})_{t\in\TT}$ as
such that
\begin{gather*}
Q_{t}^{\pi}=\int_{0}^{t}F_{s}^{\pi}U_{s}^{\pi}\,\D s\quad\forall t\in\TT.
\end{gather*}

Let us then define the random variable $\Xi^{\pi}$ as such that
\begin{gather}
\Xi^{\pi}\deq D^{\pi}\nabla_{\XX}g(x_{T}^{\pi})U_{T}^{\pi}+D^{\pi}Q_{T}^{\pi},\label{eq:temp-xi-1}
\end{gather}
We begin by showing that $\Xi\in\Ll^{r}(\Omega;\RR^{1\times d_{x}})$
for a $r\in[1,\infty)$ to be determined later. For this task, we
additionally define 
\begin{gather}
p^{\ast}\deq\frac{\pbar}{p_{1}^{\prime}}\wedge\frac{\pbar_{3}}{p_{2}^{\prime}},\label{eq:p-ast}\\
\phat\deq\frac{1}{1/p-1/p^{\ast}}.\label{eq:p-hat}
\end{gather}
Using Eq.~(\ref{eq:feasible-p}), we find that
\begin{gather*}
p^{\ast}>\frac{\pbar}{p_{1}^{\prime}\vee p_{2}^{\prime}}>\frac{\pbar}{\pbar/p-1}=\frac{1}{1/p-1/\pbar}>p.
\end{gather*}
A straight-forward calculation yields additionally the following
\begin{align*}
p^{\ast}>\frac{1}{1/p-1/\pbar} & \quad\implies\quad\frac{1}{p^{\ast}}-\frac{1}{p}<-\frac{1}{\pbar}\\
 & \quad\implies\quad\frac{1}{1/p-1/p^{\ast}}=\phat<\pbar,
\end{align*}
that is, $\phat\in(p,\pbar)$. Let $\ptilde\in(\phat,\pbar)$ be arbitrary,
and set 
\begin{gather*}
\qtilde\deq\frac{1}{1/p-1/\ptilde}.
\end{gather*}
We claim that 
\begin{gather}
\Xi^{\pi}\in\Ll^{\ptilde/(\ptilde-1)}(\Omega;\RR^{1\times d_{x}}).\label{eq:Xi-in-Lr}
\end{gather}
Since $\ptilde<\pbar$, we have $\ptilde/(\ptilde-1)>\pbar/(\pbar-1)$
and the above implies that $\Xi^{\pi}\in\Ll^{\pbar/(\pbar-1)}(\Omega;\RR^{1\times d_{x}})$.
To prove Eq.~(\ref{eq:Xi-in-Lr}), we first show that 
\begin{gather}
\begin{gathered}\nabla_{\XX}g(x_{T}^{\pi})U_{T}^{\pi}\in\Ll^{\qtilde}(\Omega;\RR^{1\times d_{x}}),\\
Q_{T}^{\pi}\in\Ll^{\qtilde}(\Omega;\RR^{1\times d_{x}}).
\end{gathered}
\label{eq:gcU-in-Lr}
\end{gather}
We explicitly prove only the latter inclusion, the former is established
in very much the same way. Basic estimates show that
\begin{align}
\bigl|Q_{T}^{\pi}\bigr|^{\qtilde} & =\biggl|\int_{0}^{T}F_{s}^{\pi}U_{s}^{\pi}\,\D s\biggr|^{\qtilde}\nonumber \\
 & \leq\biggl(\int_{0}^{T}L\Bigl(1+\bigl|x_{s}^{\pi}\bigr|^{p_{1}^{\prime}}+\int_{\AA}|a|^{p_{2}^{\prime}}\pi_{s}(\D a)\Bigr)\bigl|U_{s}^{\pi}\bigr|\,\D s\biggr)^{\qtilde}\nonumber \\
 & \leq L^{\qtilde}T^{\qtilde-1}\int_{0}^{T}\Bigl(1+\bigl|x_{s}^{\pi}\bigr|^{p_{1}^{\prime}}+\int_{\AA}|a|^{p_{2}^{\prime}}\pi_{s}(\D a)\Bigr)^{\qtilde}\bigl|U_{s}^{\pi}\bigr|^{\qtilde}\,\D s\nonumber \\
 & \leq3^{\qtilde-1}L^{\qtilde}T^{\qtilde-1}\int_{0}^{T}\Bigl(1+\bigl|x_{s}^{\pi}\bigr|^{\qtilde p_{1}^{\prime}}+\int_{\AA}|a|^{\qtilde p_{2}^{\prime}}\pi_{s}(\D a)\Bigr)\bigl|U_{s}^{\pi}\bigr|^{\qtilde}\,\D s\nonumber \\
 & \leq3^{\qtilde-1}L^{\qtilde}T^{\qtilde}\sup_{t\in\TT}\Bigl(1+\bigl|x_{t}^{\pi}\bigr|^{\qtilde p_{1}^{\prime}}+\int_{\AA}|a|^{\qtilde p_{2}^{\prime}}\pi_{t}(\D a)\Bigr)\sup_{t\in\TT}\bigl|U_{t}^{\pi}\bigr|^{\qtilde}.\label{eq:nabla-c-U-estimate}
\end{align}
Using Hölder's inequality,
\begin{align*}
\EE\biggl[\bigl|Q_{T}^{\pi}\bigr|^{\qtilde}\biggr] & \leq3^{\qtilde-1}L^{\qtilde}T^{\qtilde}\EE\biggl[\sup_{t\in\TT}\bigl|U_{t}^{\pi}\bigr|^{\qtilde\zeta/(\zeta-1)}\biggr]^{1-1/\zeta}\\
 & \qquad\times\EE\biggl[\sup_{t\in\TT}\Bigl(1+\bigl|x_{t}^{\pi}\bigr|^{\qtilde p_{1}^{\prime}}+\int_{\AA}|a|^{\qtilde p_{2}^{\prime}}\pi_{t}(\D a)\Bigr)^{\zeta}\biggr]^{1/\zeta}\quad\forall\zeta\in(1,\infty).
\end{align*}
The first factor is finite by Eq.~(\ref{eq:U-S-norm}). To show the
same for the second one, it suffices to show that we can select a
$\zeta\in(1,\infty)$ so that
\begin{gather}
\EE\biggl[\sup_{t\in\TT}\Bigl(1+\bigl|x_{t}^{\pi}\bigr|^{\zeta\qtilde p_{1}^{\prime}}+\int_{\AA}|a|^{\zeta\qtilde p_{2}^{\prime}}\pi_{t}(\D a)\Bigr)\biggr]<\infty.\label{eq:nabla-c-aux}
\end{gather}
By the standard estimate of Eq.~(\ref{eq:x-main-estimate}), and
the admissibility condition of Eq.~(\ref{eq:admissible}) and constraints
of Eq.~(\ref{eq:feasible-p}), the largest $\zeta$ we can take is
$p^{\ast}/\qtilde$. Th choice $\zeta=p^{\ast}/\qtilde$ is valid
if $\zeta\in(1,\infty)$, which is indeed the case:
\begin{align*}
\zeta & =\frac{p^{\ast}}{\qtilde}=\frac{1}{1/p-1/\phat}\frac{1}{\qtilde}=\frac{1/p-1/\ptilde}{1/p-1/\phat}>1.
\end{align*}
This is sufficient to establish Eq.~(\ref{eq:gcU-in-Lr}).

Turning to proving Eq.~(\ref{eq:Xi-in-Lr}), based on the above it
is enough to show that $D^{\pi}G\in\Ll^{\ptilde/(\ptilde-1)}(\Omega;\RR^{1\times d_{x}})$
for all $G\in\Ll^{\qtilde}(\Omega;\RR^{1\times d_{x}})$. Let $G$
be any such random variable. Then, again using Hölder's inequality,
\begin{gather}
\EE\Bigl[\bigr|D^{\pi}G\bigl|^{\ptilde/(\ptilde-1)}\Bigl]\leq\EE\Bigl[\bigr|D^{\pi}\bigl|^{\xi\ptilde/(\ptilde-1)}\Bigl]^{1/\xi}\EE\Bigl[\bigr|G\bigl|^{[\xi/(\xi-1)][\ptilde/(\ptilde-1)]}\Bigl]^{1-1/\xi}\quad\forall\xi\in(1,\infty).\label{eq:DG-finite}
\end{gather}
Selecting $\xi=[p/(p-1)]/[\ptilde/(\ptilde-1)]$, the first factor
in the above inequality is finite, since $D^{\pi}\in\Ll^{p/(p-1)}(\Omega;\RR)$.
Note that
\begin{gather*}
\xi=\frac{\frac{p}{p-1}}{\frac{\ptilde}{\ptilde-1}}=\frac{p}{\ptilde}\frac{\ptilde-1}{p-1}=\frac{p-(p/\ptilde)}{p-1}>1,
\end{gather*}
since $\ptilde>p$. Moreover, with this choice, the power on the second
factor becomes $\qtilde$,
\begin{align*}
\frac{\xi}{\xi-1}\frac{\ptilde}{\ptilde-1} & =\frac{[p/(p-1)]/[\ptilde/(\ptilde-1)]}{[p/(p-1)]/[\ptilde/(\ptilde-1)]-1}\frac{\ptilde}{\ptilde-1}\\
 & =\frac{1}{(\ptilde-1)/\ptilde-(p-1)/p}\\
 & =\frac{1}{\frac{1}{p}-\frac{1}{\ptilde}}=\qtilde,
\end{align*}
and since $G\in\Ll^{\qtilde}(\Omega;\RR^{1\times d_{x}})$, $D^{\pi}G\in\Ll^{\ptilde/(\ptilde-1)}(\Omega;\RR)$.
Eq.~(\ref{eq:Xi-in-Lr}) is now proven.

We can now apply the martingale representation theorem for $\Ll^{r}$-random
variables ($r>1$), given e.g. in \cite[Theorem 2.42]{Pardoux2014},
to $\Xi^{\pi}$ and $D^{\pi}$. This provides us with unique $\Ff$-predictable
processes $\Sigma^{\pi}=(\Sigma_{t}^{\pi})_{t\in\TT}$ and $z^{\prime\,\pi}=(z_{t}^{\prime\,\pi})_{t\in\TT}$,
taking respectively values in $\RR^{d_{w}\times d_{x}}$ and $\RR^{d_{w}}$,
such that 
\begin{gather}
\Xi^{\pi}=\EE\bigl[\Xi^{\pi}\bigr]+\int_{0}^{T}\Sigma_{s}^{\pi}\cdot\D w_{s},\label{eq:temp-mrt-1}\\
D^{\pi}=\EE\bigl[D^{\pi}\bigr]+\int_{0}^{T}z_{s}^{\prime\,\pi}\cdot\D w_{s}.
\end{gather}
These representations are unique, and 
\begin{gather}
\EE\biggl[\biggl(\int_{0}^{T}\bigl|\Sigma_{s}^{\pi}\bigr|^{2}\D s\biggr)^{\frac{1}{2}\ptilde/(\ptilde-1)}\biggr]<\infty,\quad\EE\biggr[\biggl(\int_{0}^{T}\bigl|z_{t}^{\prime\,\pi}\bigr|\,\D t\biggr)^{\frac{1}{2}p/(p-1)}\biggr]<\infty.\label{eq:Sigma-L-norm}
\end{gather}
Moreover, we define the processes $\Lambda^{\pi}=(\Lambda_{t}^{\pi})_{t\in\TT}$
and $y^{\prime\,\pi}=(y_{t}^{\prime\,\pi})_{t\in\TT}$ as the $\Ff_{t}$-conditional
expectations of $\Xi^{\pi}$ and $D^{\pi}$, which now by \cite[Corollary 2.44]{Pardoux2014}
satisfy
\begin{gather}
\Lambda_{t}^{\pi}\deq\EE\bigl[\Xi^{\pi}\bigm|\Ff_{t}\bigr]=\EE\bigl[\Xi^{\pi}\bigr]+\int_{0}^{t}\Sigma_{s}^{\pi}\cdot\D w_{s}\qquad\forall t\in\TT,\nonumber \\
y_{t}^{\prime\,\pi}\deq\EE\bigl[D^{\pi}\bigm|\Ff_{t}\bigr]=\EE\bigl[D^{\pi}\bigr]+\int_{0}^{t}z_{s}^{\prime\,\pi}\cdot\D w_{s}\qquad\forall t\in\TT,\nonumber \\
\EE\biggl[\sup_{t\in\TT}\bigl|\Lambda_{t}^{\pi}\bigr|^{\ptilde/(\ptilde-1)}\biggr]<\infty,\qquad\EE\biggl[\sup_{t\in\TT}\bigl|y_{t}^{\prime\,\pi}\bigr|^{p/(p-1)}\biggr]<\infty.\label{eq:Lambda-S-norm}
\end{gather}
We note that if $p=1$, in the application of the martingale representation
theorem we may instead pick an arbitrary $q\in(1,\infty)$ instead
of $p/(p-1)$.

We next define the processes $y^{\pi}=(y_{t}^{\pi})_{t\in\TT}$, and
$z^{\pi}=(z_{t}^{\pi})_{t\in\TT}$ as such that 
\begin{align}
y_{t}^{\pi} & \deq\Bigl(\Lambda_{t}^{\pi}-y_{t}^{\prime\,\pi}Q_{t}^{\pi}\Bigr)V_{t}^{\pi},\quad\forall t\in\TT,\label{eq:temp-y-def-1}\\
z_{t}^{\pi} & \deq\Bigl(\Sigma_{t}^{\pi}-z_{t}^{\prime\,\pi}Q_{t}^{\pi}\Bigr)V_{t}^{\pi}-y_{t}^{\pi}S_{t}^{\pi},\quad\forall t\in\TT.\label{eq:temp-z-def-1}
\end{align}
The process $(y_{t}^{\pi},y_{t}^{\prime\,\pi},z_{t}^{\pi},z_{t}^{\prime\,\pi})_{t\in\TT}$
solves Eq.~(\ref{eq:relaxed-bsde}). This is already shown above
for $(y_{t}^{\prime\,\pi},z_{t}^{\prime\,\pi})_{t\in\TT}$. To show
that $(y_{t}^{\pi},z_{t}^{\pi})_{t\in\TT}$ satisfies its respective
backward stochastic differential equation, we apply Itô's lemma to
$y^{\pi}$ as given in Eq.~(\ref{eq:temp-y-def-1}) to obtain 
\begin{align*}
\D y_{t}^{\pi} & =\Bigl(\D\Lambda_{t}^{\pi}-\D y_{t}^{\prime\,\pi}Q_{t}^{\pi}-y_{t}^{\prime\,\pi}\D Q_{t}^{\pi}\Bigr)V_{t}^{\pi}+\Bigl(\Lambda_{t}^{\pi}-y_{t}^{\prime\,\pi}Q_{t}^{\pi}\Bigr)\,\D V_{t}^{\pi}\\
 & \qquad+\Bigl(\D\Lambda_{t}^{\pi}-\D y_{t}^{\prime\,\pi}Q_{t}^{\pi}-y_{t}^{\prime\,\pi}\D Q_{t}^{\pi}\Bigr)\,\D V_{t}^{\pi}\\
 & =\Bigl(\Sigma_{t}^{\pi}\cdot\D w_{t}-z_{t}^{\prime\,\pi}\cdot\D w_{t}Q_{t}^{\pi}-y_{t}^{\prime\,\pi}F_{t}^{\pi}U_{t}^{\pi}\,\D t\Bigr)V_{t}^{\pi}\\
 & \qquad+\Bigl(\Lambda_{t}^{\pi}-y_{t}^{\prime\,\pi}Q_{t}^{\pi}\Bigr)\biggl[V_{t}^{\pi}\left(-B_{t}^{\pi}+S_{t}^{\pi}\cddot S_{t}^{\pi}\right)\D t-(V_{t}^{\pi}S_{t}^{\pi})\cdot\D w_{t}\biggr]\\
 & \qquad-\Bigl(\Sigma_{t}^{\pi}\cdot\D w_{t}-z_{t}^{\prime\,\pi}\cdot\D w_{t}Q_{t}^{\pi}\Bigr)(V_{t}^{\pi}S_{t}^{\pi})\cdot\D w_{t}\\
 & =\biggl[\Bigl(\Sigma_{t}^{\pi}-z_{t}^{\prime\,\pi}Q_{t}^{\pi}\Bigr)\cdot\D w_{t}V_{t}^{\pi}-y_{t}^{\prime\,\pi}F_{t}^{\pi}\,\D t\biggr]\\
 & \qquad+\Bigl(\Lambda_{t}^{\pi}-y_{t}^{\prime\,\pi}Q_{t}^{\pi}\Bigr)V_{t}^{\pi}\biggl[\left(-B_{t}^{\pi}+S_{t}^{\pi}\cddot S_{t}^{\pi}\right)\D t-S_{t}^{\pi}\cdot\D w_{t}\biggr]\\
 & \qquad-\Bigl(\Sigma_{t}^{\pi}-z_{t}^{\prime\,\pi}Q_{t}^{\pi}\Bigr)\cdot\D w_{t}(V_{t}^{\pi}S_{t}^{\pi})\cdot\D w_{t}\\
 & =\Bigl[\bigl(z_{t}^{\pi}+y_{t}^{\pi}S_{t}^{\pi}\bigr)\cdot\D w_{t}-y_{t}^{\prime\,\pi}F_{t}^{\pi}\,\D t\Bigr]\\
 & \qquad+y_{t}^{\pi}\biggl[\left(-B_{t}^{\pi}+S_{t}^{\pi}\cddot S_{t}^{\pi}\right)\D t-S_{t}^{\pi}\cdot\D w_{t}\biggr]\\
 & \qquad-\Bigl(\bigl(z_{t}^{\pi}+y_{t}^{\pi}S_{t}^{\pi}\bigr)\cdot\D w_{t}\Bigr)\Bigl(S_{t}^{\pi}\cdot\D w_{t}\Bigr)\\
 & =\Bigl(z_{t}^{\pi}\cdot\D w_{t}-y_{t}^{\prime\,\pi}F_{t}^{\pi}\,\D t\Bigr)+y_{t}^{\pi}\left(-B_{t}^{\pi}+S_{t}^{\pi}\cddot S_{t}^{\pi}\right)\D t\\
 & \qquad-\bigl(z_{t}^{\pi}+y_{t}^{\pi}S_{t}^{\pi}\bigr)\cddot S_{t}^{\pi}\,\D t\\
 & =-\Bigl(y_{t}^{\pi}B_{t}^{\pi}+y_{t}^{\prime\,\pi}F_{t}^{\pi}+z_{t}^{\pi}\cddot S_{t}^{\pi}\Bigr)\,\D t+z_{t}^{\pi}\cdot\D w_{t}\\
 & =-\nabla_{\XX}H(t,x_{t}^{\pi},y_{t}^{\pi},y_{t}^{\prime\,\pi},z_{t}^{\pi})\,\D t+z_{t}^{\pi}\cdot\D w_{t}.
\end{align*}
To verify the terminal condition $y_{T}^{\pi}=D^{\pi}\nabla_{\XX}g(x_{T}^{\pi})$
in Eq.~(\ref{eq:relaxed-bsde}), note that from the definitions of
$y_{t}^{\pi}$ and $\Lambda_{t}^{\pi}$, 
\begin{align*}
y_{T}^{\pi} & =\Bigl(\Lambda_{T}^{\pi}-y_{T}^{\prime\,\pi}Q_{T}^{\pi}\Bigr)V_{T}^{\pi}=\Bigl(\EE\bigl[\Xi^{\pi}\bigm|\Ff_{T}\bigr]-D^{\pi}Q_{T}^{\pi}\Bigr)V_{T}^{\pi}\\
 & =\Bigl(D^{\pi}\nabla_{\XX}g(x_{T}^{\pi})U_{T}^{\pi}+D^{\pi}Q_{T}^{\pi}-D^{\pi}Q_{T}^{\pi}\Bigr)V_{T}^{\pi}\\
 & =D^{\pi}\nabla_{\XX}g(x_{T}^{\pi}).
\end{align*}

We can now establish that $y^{\pi}\in\Ss_{\Ff}^{\pbar/(\pbar-1)}(\Omega;\YY)$
and $z^{\pi}\in\Hh_{\Ff}^{\pbar/(\pbar-1)}(\Omega;\ZZ)$, or in fact,
a slightly strengthened version thereof. To proceed, we unfortunately
need to add to the notational clutter: Let $\pzap\in(\ptilde,\pbar)$,
and set $\qzap\deq1/(1/p-1/\pzap)$. Consider first the process $y^{\pi}$,
and let us estimate the terms in its definition, Eq.~(\ref{eq:temp-y-def-1})
individually. For the first term we obtain
\begin{align*}
\EE\biggl[\sup_{t\in\TT}\bigl|\Lambda_{t}^{\pi}V_{t}^{\pi}\bigr|^{\pzap/(\pzap-1)}\biggr] & \leq\EE\biggl[\sup_{t\in\TT}\bigl|\Lambda_{t}^{\pi}\bigr|^{\zeta\pzap/(\pzap-1)}\biggr]^{1/\zeta}\EE\biggl[\sup_{t\in\TT}\bigl|V_{t}^{\pi}\bigr|^{[\zeta/(\zeta-1)][\pzap/(\pzap-1)]}\biggr]^{1-1/\zeta}.
\end{align*}
Selecting $\zeta=[\ptilde/(\ptilde-1)]/[\pzap/(\pzap-1)]$, and using
Eqs.~(\ref{eq:V-S-norm}) and~(\ref{eq:Lambda-S-norm}), we find
that the above is finite. Note that $\zeta>1$ since $\ptilde<\pzap$.
The second term is treated as follows. 
\begin{align*}
\EE\biggl[\sup_{t\in\TT}\Bigl|y_{t}^{\prime\,\pi}Q_{t}^{\pi}V_{t}^{\pi}\Bigr|^{\pzap/(\pzap-1)}\biggr] & =\EE\biggl[\sup_{t\in\TT}\Bigl|y_{t}^{\prime\,\pi}\int_{0}^{t}F_{s}^{\pi}U_{s}^{\pi}\,\D sV_{t}^{\pi}\Bigr|^{\pzap/(\pzap-1)}\\
 & \leq\EE\biggl[\sup_{t\in\TT}\bigl|y_{t}^{\prime\,\pi}\bigr|^{\pzap/(\pzap-1)}\sup_{t\in\TT}\Bigl|\int_{0}^{t}F_{s}^{\pi}U_{s}^{\pi}\,\D sV_{t}^{\pi}\Bigr|^{\pzap/(\pzap-1)}\biggr]\\
 & \leq\EE\biggl[\sup_{t\in\TT}\Bigl|y_{t}^{\prime\,\pi}\Bigr|^{\pzap/(\pzap-1)}TL^{\pzap/(\pzap-1)}\sup_{t\in\TT}\Bigl(1+\bigl|x_{t}^{\pi}\bigr|^{p_{1}^{\prime}}+\int_{\AA}|a|^{p_{2}^{\prime}}\pi_{t}(\D a)\Bigr)^{\pzap/(\pzap-1)}\\
 & \qquad\times\biggl(\sup_{t\in\TT}\bigl|U_{t}^{\pi}\bigr|\sup_{t\in\TT}\bigl|V_{t}^{\pi}\bigr|\biggr)^{\pzap/(\pzap-1)}\biggr]\\
 & \leq TL^{\pzap/(\pzap-1)}\EE\biggl[\sup_{t\in\TT}\bigl|y_{t}^{\prime\,\pi}\bigr|^{r_{1}\pzap/(\pzap-1)}\biggr]^{\frac{1}{r_{1}}}\\
 & \qquad\times\EE\biggl[\sup_{t\in\TT}\Bigl(1+\bigl|x_{t}^{\pi}\bigr|^{p_{1}^{\prime}}+\int_{\AA}|a|^{p_{2}^{\prime}}\pi_{t}(\D a)\Bigr)^{r_{2}\pzap/(\pzap-1)}\biggr]^{\frac{1}{r_{2}}}\\
 & \qquad\times\EE\biggl[\biggl(\sup_{t\in\TT}\bigl|U_{t}^{\pi}\bigr|\sup_{t\in\TT}\bigl|V_{t}^{\pi}\bigr|\biggr)^{r_{3}\pzap/(\pzap-1)}\biggr]^{\frac{1}{r_{3}}},
\end{align*}
where $r_{1},r_{2},r_{3}\in(1,\infty)$ and $r_{1}^{-1}+r_{2}^{-1}+r_{3}^{-1}=1$.
We select $r_{1}=[p/(p-1)]/[\pzap/(\pzap-1)]$, as this is by Eq.~(\ref{eq:Lambda-S-norm})
the largest choice still ensuring the finiteness of the first factor.
Next, we set $r_{2}=p^{\ast}/[\pzap/(\pzap-1)]$ as this is sufficient
to guarantee the finiteness of the second factor, cf. Eq.~(\ref{eq:nabla-c-aux}).
By Eqs.~(\ref{eq:U-S-norm}) and~(\ref{eq:V-S-norm}), $r_{3}$
may be arbitrarily large, and we then only need to verify that $r_{1}^{-1}+r_{2}^{-1}<1$:
\begin{align*}
\frac{1}{r_{1}}+\frac{1}{r_{2}} & =\frac{\pzap}{\pzap-1}\biggl(\frac{1}{p/(p-1)}+\frac{1}{p^{\ast}}\biggr)\\
 & =\frac{\pzap}{\pzap-1}\biggl(1-\frac{1}{p}+\frac{1}{p}-\frac{1}{\phat}\biggr)\\
 & =\frac{1-1/\phat}{1-1/\pzap}\\
 & <1,
\end{align*}
where the final inequality is a simple consequence of $\phat<\pzap$,
or in this case, $-1/\phat<-1/\pzap$. We now have that
\begin{gather*}
\EE\biggl[\sup_{t\in\TT}\bigl|y_{t}^{\pi}\bigr|^{\pzap/(\pzap-1)}\biggr]<\infty.
\end{gather*}

Regarding the process $z^{\pi}$, we estimate each of terms in its
definition, Eq.~(\ref{eq:temp-z-def-1}) individually. For the first
and second terms, an almost identical calculation as above for the
first and second terms of $y^{\pi}$, but using Eq.~(\ref{eq:Sigma-L-norm})
instead of Eq.~(\ref{eq:Lambda-S-norm}), shows
\begin{align*}
\EE\biggl[\biggl(\int_{0}^{T}\bigl|\Sigma_{t}^{\pi}V_{t}^{\pi}\bigr|^{2}\D t\biggr)^{\frac{1}{2}\pzap/(\pzap-1)}\biggr] & \leq\EE\biggl[\biggl(\int_{0}^{T}\bigl|\Sigma_{t}^{\pi}\bigr|^{2}\D t\biggr)^{\frac{1}{2}\pzap/(\pzap-1)}\biggl(\sup_{t\in\TT}\bigl|V_{t}^{\pi}\bigr|^{\pzap/(\pzap-1)}\biggr)\biggr]<\infty,\\
\EE\biggl[\biggl(\int_{0}^{T}\bigl|z_{t}^{\prime\,\pi}Q_{t}^{\pi}V_{t}^{\pi}\bigr|^{2}\D t\biggr)^{\frac{1}{2}\pzap/(\pzap-1)}\biggr] & \leq\EE\biggl[\biggl(\int_{0}^{T}\bigl|z_{t}^{\prime\,\pi}\bigr|^{2}\D t\biggr)^{\frac{1}{2}\pzap/(\pzap-1)}\sup_{t\in\TT}\Bigl|\int_{0}^{t}F_{s}^{\pi}U_{s}^{\pi}\,\D sV_{t}^{\pi}\Bigr|^{\pzap/(\pzap-1)}\biggr]\\
 & <\infty.
\end{align*}
The last term in $z^{\pi}$, $y_{t}^{\pi}S_{t}^{\pi}$, can be treated
in the same way, noting the boundedness of $S_{t}^{\pi}$ for all
$t\in\TT$. Putting the above together, we have that
\begin{gather*}
\EE\biggl[\biggl(\int_{0}^{T}\bigl|z_{t}^{\pi}\bigr|^{2}\D t\biggr)^{\frac{1}{2}\pzap/(\pzap-1)}\biggr]<\infty,
\end{gather*}
and since $\pbar>\pzap$, or $\pbar/(\pbar-1)<\pzap/(\pzap-1)$. Therefore,
$y^{\pi}\in\Ss_{\Ff}^{\pbar/(\pbar-1)}(\Omega;\YY)$ and $z^{\pi}\in\Hh_{\Ff}^{\pbar/(\pbar-1)}(\Omega;\ZZ)$.

Finally, we prove Eq.~(\ref{eq:var-hamil}). The solution of Eq.~(\ref{eq:delta-solo})
can be written using the processes $U^{\pi}$ and $V^{\pi}$ as
\begin{align}
\delta_{t}^{\pi,q} & =U_{t}^{\pi}\int_{0}^{t}V_{s}^{\pi}\left[b(s,x_{s}^{\pi},q_{s}-\pi_{s})-S_{s}^{\pi}\cddot\sigma(s,x_{s}^{\pi},q_{s}-\pi_{s})\right]\,\D s\nonumber \\
 & \qquad+U_{t}^{\pi}\int_{0}^{t}V_{s}^{\pi}\sigma(s,x_{s}^{\pi},q_{s}-\pi_{s})\,\D w_{s}.\label{eq:temp-delta-hat-1}
\end{align}
Consider next the processes $\gamma^{\pi,q}=(\gamma_{t}^{\pi,q})_{t\in\TT}$
and $\gamma^{\prime\,\pi,q}=(\gamma_{t}^{\prime\,\pi,q})_{t\in\TT}$
taking respectively values in $\RR^{d_{x}}$ and $\RR$, and defined
as 
\begin{align*}
\gamma_{t}^{\pi,q} & \deq V_{t}^{\pi}\delta_{t}^{\pi,q}\quad\forall t\in\TT,\\
\gamma_{t}^{\prime\,\pi,q} & \deq-Q_{t}^{\pi}V_{t}^{\pi}\delta_{t}^{\pi,q}+\delta_{t}^{\prime\,\pi,q}\quad\forall t\in\TT,
\end{align*}
where $\delta^{\prime\,\pi,q}$ is as defined in Eq.~(\ref{eq:delta-prime}).
From Eq.~(\ref{eq:temp-delta-hat-1}) we immediately find that $\gamma^{\pi,q}$
and $\gamma^{\prime\,\pi,q}$ satisfy
\begin{align*}
\D\gamma_{t}^{\pi,q} & =V_{t}^{\pi}\left[b(t,x_{t}^{\pi},\pi_{t}-q_{t})-S_{t}^{\pi}\cddot\sigma(t,x_{t}^{\pi},\pi_{t}-q_{t})\right]\,\D t+V_{t}^{\pi}\sigma(t,x_{t}^{\pi},\pi_{t}-q_{t})\,\D w_{t},\\
\D\gamma_{t}^{\prime\,\pi,q} & =-Q_{t}^{\pi}\,\D\gamma_{t}^{\pi,q}+c(t,x_{t}^{\pi},q_{s}-\pi_{s})\,\D t.
\end{align*}
Note now that the expectation of $\Lambda_{T}^{\pi}\gamma_{T}^{\pi,q}+y_{T}^{\prime\,\pi}\gamma_{T}^{\prime\,\pi,q}$
equals the right-hand side of the inequality of Eq.~(\ref{eq:var-deltaform}):
\begin{align}
\EE[\Lambda_{T}^{\pi}\gamma_{T}^{\pi,q}+y_{T}^{\prime\,\pi}\gamma_{T}^{\prime\,\pi,q}] & =\EE\Bigl[\EE\bigl[\Xi^{\pi}\bigm|\Ff_{T}\bigr]V_{T}^{\pi}\delta_{T}^{\pi,q}+\EE\bigl[D^{\pi}\bigm|\Ff_{T}\bigr]\bigl(-Q_{t}^{\pi}V_{t}^{\pi}\delta_{t}^{\pi,q}+\delta_{t}^{\prime\,\pi,q}\bigr)\Bigr]\nonumber \\
 & =\EE\biggl[\Bigl(D^{\pi}\nabla_{\XX}g(x_{T}^{\pi})U_{T}^{\pi}+D^{\pi}Q_{T}^{\pi}\Bigr)V_{T}^{\pi}\delta_{T}^{\pi,q}\nonumber \\
 & \qquad+D^{\pi}\bigl(-Q_{t}^{\pi}V_{t}^{\pi}\delta_{t}^{\pi,q}+\delta_{t}^{\prime\,\pi,q}\bigr)\biggl]\nonumber \\
 & =\EE\biggl[D^{\pi}\nabla_{\XX}g(x_{T}^{\pi})\delta_{T}^{\pi,q}+D^{\pi}\delta_{t}^{\prime\,\pi,q}\biggl].\label{eq:temp-eee-1}
\end{align}
To compute the above, we differentiate $\Lambda_{t}^{\pi}\gamma_{t}^{\pi}+y_{t}^{\prime\,\pi}\gamma_{t}^{\prime\,\pi,q}$
to obtain\footnote{The following identities were used in simplifying the expressions
in this chain of equations: (\emph{i}) $(\Sigma_{t}^{\pi}\cdot\D w_{t})V_{t}^{\pi}=(\Sigma_{t}^{\pi}V_{t}^{\pi})\cdot\D w_{t}$;
(\emph{ii}) $(\Sigma_{t}^{\pi}\cdot\D w_{t})V_{t}^{\pi}\sigma(t,x_{t}^{\pi},q_{t}-\pi_{t})\,\D w_{t}=\trace[\Sigma_{t}^{\pi}V_{t}^{\pi}\sigma(t,x_{t}^{\pi},q_{t}-\pi_{t})]\,\D t$;
(\emph{iii}) $(z_{t}^{\prime\,\pi}\cdot\D w_{t})Q_{t}^{\pi}V_{t}^{\pi}\sigma(t,x_{t}^{\pi},q_{t}-\pi_{t})\,\D w_{t}=\trace[z_{t}^{\prime\,\pi}Q_{t}^{\pi}V_{t}^{\pi}\sigma(t,x_{t}^{\pi},q_{t}-\pi_{t})]\,\D t$;
(\emph{iv}) $y_{t}^{\pi}S_{t}^{\pi}\cddot\sigma(t,x_{t}^{\pi},q_{t}-\pi_{t})=\trace[y_{t}^{\pi}S_{t}^{\pi}\sigma(t,x_{t}^{\pi},q_{t}-\pi_{t})]$;
(\emph{v}) $(z_{t}^{\prime\,\pi}\cdot\D w_{t})Q_{t}^{\pi}V_{t}^{\pi}=(z_{t}^{\prime\,\pi}Q_{t}^{\pi}V_{t}^{\pi})\cdot\D w_{t}$.},
\begin{align*}
\D\bigl(\Lambda_{t}^{\pi}\gamma_{t}^{\pi}+y_{t}^{\prime\,\pi}\gamma_{t}^{\prime\,\pi,q}\bigr) & =\D\Lambda_{t}^{\pi}\gamma_{t}^{\pi}+\Lambda_{t}^{\pi}\D\gamma_{t}^{\pi}+\D\Lambda_{t}^{\pi}\D\gamma_{t}^{\pi}\\
 & \qquad+\D y_{t}^{\prime\,\pi}\gamma_{t}^{\prime\,\pi,q}+y_{t}^{\prime\,\pi}\D\gamma_{t}^{\prime\,\pi,q}+\D y_{t}^{\prime\,\pi}\D\gamma_{t}^{\prime\,\pi,q}\\
 & =\D\Lambda_{t}^{\pi}\gamma_{t}^{\pi}+\Bigl(\Lambda_{t}^{\pi}-y_{t}^{\prime\,\pi}Q_{t}^{\pi}\Bigr)\,\D\gamma_{t}^{\pi}+\D\Lambda_{t}^{\pi}\D\gamma_{t}^{\pi}\\
 & \qquad+\D y_{t}^{\prime\,\pi}\gamma_{t}^{\prime\,\pi,q}+y_{t}^{\prime\,\pi}c(t,x_{t}^{\pi},q_{t}-\pi_{t})\,\D t+\D y_{t}^{\prime\,\pi}\D\gamma_{t}^{\prime\,\pi,q}\\
 & =\bigl(\Sigma_{t}^{\pi}\cdot\D w_{t}\bigr)V_{t}^{\pi}\delta_{t}^{\pi,q}\\
 & \qquad+\Bigl(\Lambda_{t}^{\pi}-y_{t}^{\prime\,\pi}Q_{t}^{\pi}\Bigr)\biggl\{ V_{t}^{\pi}\Bigl[b(t,x_{t}^{\pi},q_{t}-\pi_{t})-S_{t}^{\pi}\cddot\sigma(t,x_{t}^{\pi},q_{t}-\pi_{t})\Bigr]\,\D t\\
 & \qquad\qquad\qquad\qquad\quad+V_{t}^{\pi}\sigma(t,x_{t}^{\pi},q_{t}-\pi_{t})\,\D w_{t}\biggr\}\\
 & \qquad+\bigl(\Sigma_{t}^{\pi}\cdot\D w_{t}\bigr)\biggl[V_{t}^{\pi}\sigma(t,x_{t}^{\pi},q_{t}-\pi_{t})\,\D w_{t}\biggr]\\
 & \qquad+\bigl(z_{t}^{\prime\,\pi}\cdot\D w_{t}\bigr)\Bigl(-Q_{t}^{\pi}V_{t}^{\pi}\delta_{t}^{\pi,q}+\delta_{t}^{\prime\,\pi,q}\Bigr)\\
 & \qquad+y_{t}^{\prime\,\pi}c(t,x_{t}^{\pi},q_{t}-\pi_{t})\,\D t-\bigl(z_{t}^{\prime\,\pi}\cdot\D w_{t}\bigr)Q_{t}^{\pi}V_{t}^{\pi}\sigma(t,x_{t}^{\pi},q_{t}-\pi_{t})\,\D w_{t}\\
 & =\bigl(\Sigma_{t}^{\pi}V_{t}^{\pi}\bigr)\cdot\D w_{t}\delta_{t}^{\pi,q}\\
 & \qquad+\biggl\{ y_{t}^{\pi}b(t,x_{t}^{\pi},q_{t}-\pi_{t})-\trace\Bigl[y_{t}^{\pi}S_{t}^{\pi}\sigma(t,x_{t}^{\pi},q_{t}-\pi_{t})\Bigr]\biggr\}\,\D t\\
 & \qquad+y_{t}^{\pi}\sigma(t,x_{t}^{\pi},q_{t}-\pi_{t})\,\D w_{t}\\
 & \qquad+\trace\Bigl[\Sigma_{t}^{\pi}V_{t}^{\pi}\sigma(t,x_{t}^{\pi},q_{t}-\pi_{t})\Bigr]\,\D t\\
 & \qquad-\bigl(z_{t}^{\prime\,\pi}Q_{t}^{\pi}V_{t}^{\pi}\bigr)\cdot\D w_{t}\delta_{t}^{\pi,q}+\bigl(z_{t}^{\prime\,\pi}\cdot\D w_{t}\bigr)\delta_{t}^{\prime\,\pi,q}\\
 & \qquad+y_{t}^{\prime\,\pi}c(t,x_{t}^{\pi},q_{t}-\pi_{t})\,\D t+\trace\Bigl[z_{t}^{\prime\,\pi}Q_{t}^{\pi}V_{t}^{\pi}\sigma(t,x_{t}^{\pi},q_{t}-\pi_{t})\Bigr]\,\D t\\
 & =\biggl\{ y_{t}^{\pi}b(t,x_{t}^{\pi},q_{t}-\pi_{t})+y_{t}^{\prime\,\pi}c(t,x_{t}^{\pi},q_{t}-\pi_{t})\\
 & \qquad\qquad+\trace\Bigl[\Bigl(\Sigma_{t}^{\pi}V_{t}^{\pi}+z_{t}^{\prime\,\pi}Q_{t}^{\pi}V_{t}^{\pi}-y_{t}^{\pi}S_{t}^{\pi}\Bigr)\sigma(t,x_{t}^{\pi},q_{t}-\pi_{t})\Bigr]\biggr\}\,\D t\\
 & \qquad+y_{t}^{\pi}\sigma(t,x_{t}^{\pi},q_{t}-\pi_{t})\,\D w_{t}\\
 & \qquad+\bigl(\Sigma_{t}^{\pi}V_{t}^{\pi}-z_{t}^{\prime\,\pi}Q_{t}^{\pi}V_{t}^{\pi}\bigr)\cdot\D w_{t}\delta_{t}^{\pi,q}+\bigl(z_{t}^{\prime\,\pi}\cdot\D w_{t}\bigr)\delta_{t}^{\prime\,\pi,q}\\
 & =\biggl\{ y_{t}^{\pi}b(t,x_{t}^{\pi},q_{t}-\pi_{t})+y_{t}^{\prime\,\pi}c(t,x_{t}^{\pi},q_{t}-\pi_{t})+\trace\Bigl[z_{t}^{\pi}\sigma(t,x_{t}^{\pi},q_{t}-\pi_{t})\Bigr]\biggr\}\,\D t\\
 & \qquad+\Bigl[\bigl(z_{t}^{\pi}+y_{t}^{\pi}S_{t}^{\pi}\bigr)\delta_{t}^{\pi,q}+y_{t}^{\pi}\sigma(t,x_{t}^{\pi},q_{t}-\pi_{t})\Bigr]\,\D w_{t}+\bigl(z_{t}^{\prime\,\pi}\delta_{t}^{\prime\,\pi,q}\bigr)\cdot\D w_{t}.
\end{align*}
Evaluating this at $t=T$, taking the expectation, and using Eq.~(\ref{eq:temp-eee-1}),
we get
\begin{align}
\EE\biggl[D^{\pi}\nabla_{\XX}g(x_{T}^{\pi})\delta_{T}^{\pi,q} & +D^{\pi}\delta_{t}^{\prime\,\pi,q}\biggl]\nonumber \\
 & =\EE\biggl[\int_{0}^{T}\biggl\{ y_{s}^{\pi}b(s,x_{s}^{\pi},q_{s}-\pi_{s})+y_{s}^{\prime\,\pi}c(s,x_{s}^{\pi},q_{s}-\pi_{s})\nonumber \\
 & \qquad\qquad+\trace\Bigl[z_{s}^{\pi}\sigma(s,x_{s}^{\pi},q_{s}-\pi_{s})\Bigr]\biggr\}\,\D s\biggl]+M^{\pi,q},\label{eq:temp-var-hamil}
\end{align}
where
\begin{gather*}
M^{\pi,q}\deq\EE\biggl[\int_{0}^{T}\Bigl[\bigl(z_{t}^{\pi}+y_{t}^{\pi}S_{t}^{\pi}\bigr)\delta_{t}^{\pi,q}+y_{t}^{\pi}\sigma(t,x_{t}^{\pi},q_{t}-\pi_{t})\Bigr]\,\D w_{t}+\int_{0}^{T}\bigl(z_{t}^{\prime\,\pi}\delta_{t}^{\prime\,\pi,q}\bigr)\cdot\D w_{t}\biggr].
\end{gather*}
Eq.~(\ref{eq:var-hamil}) follows now immediately from Eqs.~(\ref{eq:var-deltaform})
and~(\ref{eq:temp-var-hamil}), if $M^{\pi,q}=0$. This is the case
if the integrands in the expression are in $\Hh_{\Ff}^{1}(\Omega;\RR^{1\times d_{w}})$,
see e.g. \cite[Theorem 2.6]{Pardoux2014}. Using the estimates of
Eqs.~(\ref{eq:delta-finite}) for $\delta^{\pi},\delta^{\prime\,\pi}$,
and the bounds on $S_{t}^{\pi}$ and $\sigma$, the square integrability
can be verified with a straight-forward application of Hölder's inequality.
For example, for the first term,
\begin{align*}
\EE\biggl[\biggl(\int_{0}^{T}\bigl|z_{t}^{\pi}\delta_{t}^{\pi,q}\bigr|^{2}\D t\biggr)^{1/2}\biggr] & \leq\EE\biggl[\biggl(\int_{0}^{T}\bigl|z_{t}^{\pi}\bigr|^{2}\D t\biggr)^{1/2}\sup_{t\in\TT}\bigl|\delta_{t}^{\pi,q}\bigr|\biggr]\\
 & \leq\EE\biggl[\biggl(\int_{0}^{T}\bigl|z_{t}^{\pi}\bigr|^{2}\D t\biggr)^{\frac{1}{2}\frac{\pbar}{\pbar-1}}\biggr]^{1-\frac{1}{\pbar}}\EE\biggl[\sup_{t\in\TT}\bigl|\delta_{t}^{\pi,q}\bigr|^{\pbar}\biggr]^{\frac{1}{\pbar}}\\
 & <\infty,
\end{align*}
and the rest follow analogously. If $p=1$, the last term is somewhat
special, since we cannot apply Hölder's inequality in the same way
(we have no $\Ll^{\infty}$-norm equivalent bound on $z^{\prime\,\pi}$).
However, the estimate of Eq.~(\ref{eq:delta-prime-p-finite}) is
slightly stronger than that of Eq.~(\ref{eq:delta-pbar-finite})
precisely to accommodate this edge case. Therefore, $M^{\pi,q}=0$,
and the proof is complete.
\end{proof}
\begin{proof}[Proof of Lemma~\ref{lem:suff}]
let $q\in\VvV^{p}(b,\sigma,\nu)$ be arbitrary. We have from the
$\Ll$-convexity of $\rho$ and convexity of $g$ that 
\begin{align*}
\rho(\Theta^{\pi})-\rho(\Theta^{q}) & =\rho(g(x_{T}^{\pi})+x_{T}^{\prime\,\pi})-\rho(g(x_{T}^{q})+x_{T}^{\prime\,q})\\
 & \leq\EE\left[\DF\rho(\Theta^{\pi})(g(x_{T}^{\pi})+x_{T}^{\prime\,\pi})(g(x_{T}^{\pi})+x_{T}^{\prime\,\pi}-g(x_{T}^{q})-x_{T}^{\prime\,q})\right]\\
 & \leq\EE\left[D^{\pi}(\nabla_{\XX}g(x_{T}^{\pi})(x_{T}^{\pi}-x_{T}^{q})+x_{T}^{\prime\,\pi}-x_{T}^{\prime\,q})\right].
\end{align*}
By using the construction of the process $(y_{t}^{\pi},y_{t}^{\prime\,\pi})_{t\in\TT}$,
the above becomes
\begin{gather*}
\rho(\Theta^{\pi})-\rho(\Theta^{q})\leq\EE\left[y_{T}^{\pi}(x_{T}^{\pi}-x_{T}^{q})+y_{T}^{\prime\,\pi}(x_{T}^{\prime\,\pi}-x_{T}^{\prime\,q})\right].
\end{gather*}
We can evaluate the above expectation by first differentiating $y_{t}^{\pi}(x_{t}^{\pi}-x_{t}^{q})+y_{t}^{\prime\,\pi}(x_{t}^{\prime\,\pi}-x_{t}^{\prime\,q})$,
using the $y$ and $x$ differential equations~(\ref{eq:vague-sde},~\ref{eq:sde-aug},~\ref{eq:relaxed-bsde}),
evaluating the integrals at $T$, and then taking expectations:
\begin{align*}
\D\Bigl[y_{t}^{\pi}(x_{t}^{\pi}-x_{t}^{q}) & +y_{t}^{\prime\,\pi}(x_{t}^{\prime\,\pi}-x_{t}^{\prime\,q})\Bigr]\\
 & =\Bigl[-\nabla_{\XX}H(t,x_{t}^{\pi},y_{t}^{\pi},y_{t}^{\prime\,\pi},z_{t}^{\pi},\pi_{t})(x_{t}^{\pi}-x_{t}^{q})+y_{t}^{\pi}(b(t,x_{t}^{\pi},\pi_{t})-b(t,x_{t}^{q},q_{t}))\\
 & \qquad+y_{t}^{\prime\,\pi}(c(t,x_{t}^{\pi},\pi_{t})-c(t,x_{t}^{q},q_{t}))+\trace\left(z_{t}^{\pi}(\sigma(t,x_{t}^{\pi},\pi_{t})-\sigma(t,x_{t}^{q},q_{t}))\right)\Bigr]\,\D t\\
 & \qquad+\left[z_{t}^{\pi}(x_{t}^{\pi}-x_{t}^{q})+z_{t}^{\prime\,\pi}(x_{t}^{\prime\,\pi}-x_{t}^{\prime\,q})+y_{t}^{\pi}(\sigma(t,x_{t}^{\pi},\pi_{t})-\sigma(t,x_{t}^{q},q_{t}))\right]\,\D w_{t},
\end{align*}
so that
\begin{align}
\EE\Bigl[y_{T}^{\pi}(x_{T}^{\pi}-x_{T}^{q}) & +y^{\prime\,\pi}(x_{T}^{\prime\,\pi}-x_{T}^{\prime\,q})\Bigr]\nonumber \\
 & =\EE\biggl[\int_{0}^{T}\Bigl(-\nabla_{\XX}H(t,x_{t}^{\pi},y_{t}^{\pi},y_{t}^{\prime\,\pi},z_{t}^{\pi},\pi_{t})(x_{t}^{\pi}-x_{t}^{q})\nonumber \\
 & \qquad\qquad+H(t,x_{t}^{\pi},y_{t}^{\pi},y_{t}^{\prime\,\pi},z_{t}^{\pi},\pi_{t})-H(t,x_{t}^{q},y_{t}^{\pi},y_{t}^{\prime\,\pi},z_{t}^{\pi},q_{t})\Bigr)\,\D t\biggr].\label{eq:suff-yx-optim}
\end{align}
The expectation of the integrals against the Brownian motion is zero
\cite[Theorem 2.6]{Pardoux2014}, since the integrands are in $\Hh_{\Ff}^{1}(\Omega;\RR)$.

Let us denote
\begin{align*}
h_{t}(x,q) & \deq H(t,x,y_{t}^{\pi},y_{t}^{\prime\,\pi},z_{t}^{\pi},q)\\
\nabla h_{t}(x,q) & \deq\nabla_{\XX}H(t,x,y_{t}^{\pi},y_{t}^{\prime\,\pi},z_{t}^{\pi},q)\\
 & \qquad\qquad\forall(t,x,q)\in\TT\times\XX\times\Pp^{\pbar_{3}}(\AA).
\end{align*}
Since we have assumed that $H$ is jointly convex in the state and
control variables, we have that for any $\alpha\in(0,1]$,
\begin{multline*}
\frac{h_{t}\left((1-\alpha)x_{0}+\alpha x_{1},(1-\alpha)\pi_{0}+\alpha\pi_{1}\right)-h_{t}\left(x_{0},\pi_{0}\right)}{\alpha}\leq h_{t}\left(x_{1},\pi_{1}\right)-h_{t}\left(x_{0},\pi_{0}\right)\\
\forall(t,x_{0},x_{1},\pi_{0},\pi_{1})\in\TT\times\XX\times\XX\times\Pp^{\pbar_{3}}(\AA)\times\Pp^{\pbar_{3}}(\AA).
\end{multline*}
On the other hand, using the differentiability of $H$ on $\XX$,
\begin{align*}
\lim_{\alpha\to0}\frac{1}{\alpha}\Bigl[h_{t} & \left((1-\alpha)x_{0}+\alpha x_{1},(1-\alpha)\pi_{0}+\alpha\pi_{1}\right)-h_{t}\left(x_{0},\pi_{0}\right)\Bigr]\\
 & =\lim_{\alpha\to0}\frac{1}{\alpha}\Bigl[(1-\alpha)h_{t}\left((1-\alpha)x_{0}+\alpha x_{1},\pi_{0}\right)\\
 & \qquad\qquad+\alpha h_{t}\left((1-\alpha)x_{0}+\alpha x_{1},\pi_{1}\right)-h_{t}\left(x_{0},\pi_{0}\right)\Bigr]\\
 & =\lim_{\alpha\to0}\frac{1}{\alpha}\Bigl\{ h_{t}\left((1-\alpha)x_{0}+\alpha x_{1},\pi_{0}\right)-h_{t}\left(x_{0},\pi_{0}\right)\\
 & \qquad\qquad+\alpha\Bigl[h_{t}\left((1-\alpha)x_{0}+\alpha x_{1},\pi_{1}\right)-h_{t}\left((1-\alpha)x_{0}+\alpha x_{1},\pi_{0}\right)\Bigr]\Bigr\}\\
 & =\nabla h_{t}(x_{0},\pi_{0})(x_{1}-x_{0})+h_{t}(x_{0},\pi_{1})-h_{t}(x_{0},\pi_{0}),
\end{align*}
so that 
\begin{multline*}
\nabla h_{t}(x_{0},\pi_{0})(x_{1}-x_{0})+h_{t}(x_{0},\pi_{1})-h_{t}(x_{0},\pi_{0})\leq h_{t}\left(x_{1},\pi_{1}\right)-h_{t}\left(x_{0},\pi_{0}\right)\\
\forall(t,x_{0},x_{1},\pi_{0},\pi_{1})\in\TT\times\XX\times\XX\times\Pp^{\pbar_{3}}(\AA)\times\Pp^{\pbar_{3}}(\AA).
\end{multline*}
Using the above, along with the assumption that $\pi_{t}$ minimizes
$\eta\to h_{t}(x_{t}^{\pi},\eta)$ for $\PP\times\D t$-almost every
$(\omega,t)\in\Omega\times\TT$, we have that
\begin{align*}
h_{t}\left(x_{t}^{q},q_{t}\right)-h_{t}\left(x_{t}^{\pi},\pi_{t}\right) & \geq\nabla h_{t}(x_{t}^{\pi},\pi_{t})(x_{t}^{q}-x_{t}^{\pi})+h_{t}(x_{t}^{\pi},q_{t})-h_{t}(x_{t}^{\pi},\pi_{t})\\
 & \geq\nabla h_{t}(x_{t}^{\pi},\pi_{t})(x_{t}^{q}-x_{t}^{\pi})
\end{align*}
$\PP\times\D t$-almost always. Applying this estimate in Eq.~(\ref{eq:suff-yx-optim}),
we get
\begin{align*}
\EE\Bigl[y_{T}^{\pi}(x_{T}^{\pi}-x_{T}^{q}) & +y_{T}^{\prime\,\pi}(x_{T}^{\prime\,\pi}-x_{T}^{\prime\,q})\Bigr]\leq0,
\end{align*}
implying that 
\begin{gather*}
\rho(\Theta^{\pi})-\rho(\Theta^{q})\leq0,
\end{gather*}
and the proof is complete.
\end{proof}

\subsection{Proofs for Section 5}
\begin{proof}[Proof of Lemma~\ref{lem:easy-derivatives}]
(\emph{i}) Note that $X=\EE[X]$ if and only if $X$ is almost surely
constant. The Fréchet derivative of the first term in Eq.~(\ref{eq:rho-mean-dev}),
the expectation, is clearly $\DF\EE[X]=1$. Focusing then on the derivative
of the second, norm term, we first note that the derivative of $\Ll^{2}(\Omega;\RR)\ni X\to\Vert X\Vert_{2}\in\RR_{\geq0}$
is $\Vert X\Vert_{2}^{-1}X\in\Ll^{2}(\Omega;\RR)$, which suggests
that
\begin{gather*}
\langle\DF\Vert X-\EE[X]\Vert_{2},Y\rangle=\left\langle \frac{X-\EE[X]}{\bigl\Vert X-\EE[X]\bigr\Vert_{2}},Y-\EE[Y]\right\rangle \quad\forall Y\in\Ll^{2}(\Omega;\RR).
\end{gather*}
This can be verified through a direct calculation: By straight-forward
algebraic manipulation, one obtains
\begin{multline*}
\frac{\bigl\Vert X+Y-\EE\left[X+Y\right]\bigr\Vert_{2}-\Vert X-\EE[X]\Vert_{2}-\left\langle \frac{X-\EE[X]}{\bigl\Vert X-\EE\left[X\right]\bigr\Vert_{2}},Y-\EE[Y]\right\rangle }{\Vert Y\Vert_{2}}\\
=\frac{\Vert Y-\EE[Y]\Vert_{2}^{2}\Vert X-\EE[X]\Vert_{2}^{2}-\left\langle X-\EE[X],Y-\EE[Y]\right\rangle ^{2}}{\Vert X-\EE[X]\Vert_{2}\Vert Y\Vert_{2}\left[\bigl\Vert X+Y-\EE\left[X+Y\right]\bigr\Vert_{2}\Vert X-\EE[X]\Vert_{2}+\left\langle X-\EE[X],X-\EE[X]-Y+\EE[Y]\right\rangle \right]}.
\end{multline*}
Taking the limit $\Vert Y\Vert_{2}\to0$, the following is quickly
recovered
\begin{multline*}
\lim_{\Vert Y\Vert_{2}\to0}\frac{\bigl\Vert X+Y-\EE\left[X+Y\right]\bigr\Vert_{2}-\Vert X-\EE[X]\Vert_{2}-\left\langle \frac{X-\EE[X]}{\bigl\Vert X-\EE\left[X\right]\bigr\Vert_{2}},Y-\EE[Y]\right\rangle }{\Vert Y\Vert_{2}}\\
=\frac{1}{2\bigl\Vert X-\EE[X]\bigr\Vert_{2}^{3}}\lim_{\Vert Y\Vert_{2}\to0}\frac{\left\langle X-\EE[X],X-\EE[X]\right\rangle \left\langle Y-\EE[Y],Y-\EE[Y]\right\rangle -\left\langle X-\EE[X],Y-\EE[Y]\right\rangle ^{2}}{\Vert Y\Vert_{2}}.
\end{multline*}
The right-hand side is clearly zero. Noting that $\langle X-\EE[X],1\rangle=0$,
Eq.~(\ref{eq:D-mean-dev}) follows. The non-differentiability at
almost surely constant random variables follows from the positive
homogeneity and translation invariance of $\rho$: At $X=x$, $x\in\RR$,
we have $\rhoMD(X)=\rhoMD(0)+x$, but at $X=0$, we have that $\lim_{\epsilon\to0}\epsilon^{-1}[\rhoMD(0+\epsilon Y)-\rhoMD(0)]=\lim_{\epsilon\to0}\epsilon^{-1}\rhoMD(\epsilon Y)=\rhoMD(Y)$
which is not linear. Eq.~(\ref{eq:LD-mean-dev}) is easily found
from the form of the Fréchet derivative of Eq.~(\ref{eq:D-mean-dev}).

(\emph{ii}) We first note that
\begin{gather}
(x+h)_{\epsilon+}-(x)_{\epsilon+}=h\int_{0}^{1}U_{\epsilon}(x+h\xi)\,\D\xi\quad\forall x,h\in\RR,\label{eq:temp-plus}\\
U_{\epsilon}(x+h)-U_{\epsilon}(x)=h\int_{0}^{1}U_{\epsilon}^{\prime}(x+h\xi)\,\D\xi\quad\forall x,h\in\RR.\nonumber 
\end{gather}
Since $U_{\epsilon}^{\prime}(x)=\epsilon^{-1}\E^{-x/\epsilon}/(1+\E^{-x/\epsilon})^{2}\in(0,1/(4\epsilon)]\,\forall x\in\RR$,
from the second equality it follows that
\begin{gather}
|U_{\epsilon}(x+h)-U_{\epsilon}(x)|\leq\frac{|h|}{4\epsilon}\quad\forall x,h\in\RR.\label{eq:temp-U-estim}
\end{gather}
We verify Eq.~(\ref{eq:D-mean-semidev}) by a direct calculation.
Consider now
\begin{align*}
\Bigl|\rhoEpsMSD(X+H)-\rhoEpsMSD(X) & -\EE[\DF\rhoEpsMSD(X)H]\Bigr|\\
 & =\Bigl|\EE[H]+\beta\EE\left[(X-\EE\left[X\right]+H-\EE[H])_{\epsilon+}-(X-\EE[X])_{\epsilon+}\right]\\
 & \qquad-\EE\left[H\right]-\beta\EE\left[U_{\epsilon}(X-\EE[X])H\right]+\beta\EE\left[U_{\epsilon}(X-\EE[X])\right]\EE\left[H\right]\Bigr|\\
 & =\beta\Bigl|\EE\left[(X-\EE\left[X\right]+H-\EE[H])_{\epsilon+}-(X-\EE[X])_{\epsilon+}\right]\\
 & \qquad-\EE\left[U_{\epsilon}(X-\EE[X])(H-\EE\left[H]\right])\right]\Bigr|.
\end{align*}
Next, using the identity of Eq.~(\ref{eq:temp-plus}), followed by
Hölder's inequality, and the estimate of Eq.~(\ref{eq:temp-U-estim}),
we get that
\begin{align*}
\Bigl|\rhoEpsMSD(X+H)-\rhoEpsMSD(X) & -\EE[\DF\rhoEpsMSD(X)H]\Bigr|\\
 & =\beta\Bigl|\EE\left[(H-\EE[H])\left(\int_{0}^{1}U_{\epsilon}(X-\EE[X]+(H-\EE[H])\xi)\,\D\xi-U_{\epsilon}(X-\EE[X])\right)\right]\Bigl|\\
 & \leq\beta\Vert H-\EE[H]\Vert_{2}\left\Vert \int_{0}^{1}\left[U_{\epsilon}\left(X-\EE[X]+(H-\EE[H])\xi\right)-U_{\epsilon}(X-\EE[X])\right]\,\D\xi\right\Vert _{2}\\
 & \leq\beta\Vert H-\EE[H]\Vert_{2}\left\Vert \int_{0}^{1}\frac{|H-\EE[H]|\xi}{4\epsilon}\,\D\xi\right\Vert _{2}\\
 & =\frac{\beta}{8\epsilon}\Vert H-\EE[H]\Vert_{2}^{2}.
\end{align*}
This is sufficient to show that $\rhoEpsMSD(X+H)-\rhoEpsMSD(X)-\EE[\DF\rhoEpsMSD(X)H]\in o(\Vert H\Vert_{2})$
for all $X,H\in\Ll^{2}(\Omega;\RR)$, and so $\rhoEpsMSD$ is Fréchet
differentiable on $\Ll^{2}(\Omega;\RR)$ with the given derivative
$\DF\rhoEpsMSD(X)$ for all $X\in\Ll^{2}(\Omega;\RR)$. The form of
the $\Ll$-derivative is easily verified from $\DF\rhoEpsMSD(X)$.

(\emph{iii}) It suffices to show that the limit in Eq.~(\ref{eq:dir-deriv})
is attained uniformly over $Y\in\Ll^{\infty}(\Omega;\RR)$ such that
$\Vert Y\Vert_{\infty}=1$. We now have that $\EE[\E^{\theta(X+\epsilon Y)}]-\EE[\E^{\theta X}]=\epsilon\EE[\E^{\theta X}\theta Y]+o(\epsilon)$
by Taylor series expanding the exponential and using the fact $Y(\omega)\leq\Vert Y\Vert_{\infty}$
almost everywhere. By the chain rule of differentiation, $\DF\rhoEntr(X)=\E^{\theta X}/\EE[\E^{\theta X}]$
follows. The $\Ll$-derivative is similarly easily found.
\end{proof}
\begin{proof}[Proof of Proposition~\ref{prop:alloc}]
We first note that Assumptions~\ref{assu:sde-baseline} are easily
verified for the stochastic differential equations of Problem~$\pP_{\phi}$.
Suppose $\pi\in\VvV^{p}(b_{\phi},\sigma_{\phi},\nu_{\phi})$ is $\pP_{\phi}$-optimal.
The Hamiltonian of Eq.~(\ref{eq:relaxed-hamiltonian-1}) becomes
\begin{gather*}
H(y,z,\phi)=y\left[r+(\mu-r)\phi-\frac{1}{2}\sigma^{2}\phi^{2}\right]+z\sigma\phi\quad\forall(y,z,\phi)\in\YY\times\ZZ\times\AA.
\end{gather*}
By Theorem~\ref{thm:ra-min-principle}, we know that there exists
a process $(y_{t}^{\pi},y_{t}^{\prime\,\pi},z_{t}^{\pi},z_{t}^{\prime\,\pi})_{t\in\TT}$
where by Eqs.~(\ref{eq:y-bsde}, \ref{eq:y-prime}, \ref{eq:y-prime-bsde}),
$y_{t}^{\prime\,\pi}=\EE\bigl[\DF\rho(\lL(-x_{T}^{\pi}))(-x_{T}^{\pi})\bigm|\Ff_{t}\bigr]$,
$\D y_{t}^{\prime\,\pi}=z_{t}^{\prime\,\pi}\,\D w_{t}$, and $\D y_{t}^{\pi}=z_{t}^{\pi}\,\D w_{t}$,
$y_{T}=-y_{T}^{\prime}=-\DF\rho(\lL(-x_{T}^{\pi}))(-x_{T}^{\pi})$.
This yields Eqs.~(\ref{eq:phi-y-prime}) and~(\ref{eq:phi-y-prime-bsde}).
By the uniqueness of solutions of Lipschitz backward differential
equations, see e.g. \cite[Theorem 5.17]{Pardoux2014}, we have that
$y_{t}^{\pi}=-y_{t}^{\prime\,\pi}$ and $z_{t}^{\pi}=-z_{t}^{\prime\,\pi}$
for all $t\in\TT$, $\PP$-almost surely. From the assumption of positivity
of $\DF\rho(\cdot)(\cdot)$ we infer that $y_{t}^{\prime\,\pi}>0$
and $y_{t}^{\pi}<0$ for all $t\in\TT$. We can thus assume in the
following that $\YY=\RR_{<0}$ and $\YY^{\prime}=\RR_{>0}$. 

Since $H$ as a function of the control, $\phi\to H(y,z,\phi)$ is
strictly convex for all $(y,z)\in\YY\times\ZZ$, by an elementary
application of Jensen's inequality we see that a minimizer of Eq.~(\ref{eq:H-inf-1})
is found in Dirac measures: For any $\pi\in\Pp(\AA)$, we have that
\begin{gather*}
H(y,z,\bar{\phi})\leq\int_{\AA}H(y,z,\phi)\,\pi(\D\phi),
\end{gather*}
for all $(y,z)\in\YY\times\ZZ$, where $\bar{\phi}=\int_{\AA}\phi\,\pi(\D\phi)$.
If $\pi^{\ast}\in\Pp(\AA)$ is a minimizer of $\pi\to H(y,z,\pi)=\int_{\AA}H(y,z,\phi)\,\pi(\D\phi)$
and $\bar{\phi}^{\ast}=\int_{\AA}\phi\,\pi^{\ast}(\D\phi)$, then
using the convexity of $\AA$, $\bar{\phi}^{\ast}\in\AA$ we have
the Dirac measure $\delta_{\bar{\phi}^{\ast}}$ such that $H(y,z,\delta_{\bar{\phi}^{\ast}})\leq H(y,z,\pi^{\ast})$.
Therefore a minimizer is always found within the set of Dirac measures,
and we can consider the problem
\begin{gather}
\inf_{\phi\in\AA}H(y,z,\phi).\label{eq:infH-temp}
\end{gather}
This immediately yields as a minimizer the function $\phi^{\ast}:$$\YY\times\ZZ\to\AA$
such that
\begin{gather*}
\phi^{\ast}(y,z)=\underline{\phi}\vee\frac{\mu-r+\sigma z/y}{\sigma^{2}}\wedge\bar{\phi}\quad\forall(y,z)\in\YY\times\ZZ.
\end{gather*}
Since $H$ and the terminal cost function are convex in $(x,\phi)$
and $x$, respectively, and $\rho$ is $\Ll$-convex by the assumption
of the proposition, Assumption~\ref{assu:sufrel} holds and by Theorem~\ref{thm:ra-min-principle}(\emph{ii})
the above properties are also sufficient for $\phi_{t}=\phi^{\ast}(y_{t}^{\pi},z_{t}^{\pi})$
to be $\pP_{\phi}$-optimal. 
\end{proof}
\bibliographystyle{siam}
\bibliography{Lit/rafbsde}

\end{document}